\documentclass{amsart}

\usepackage[english]{babel}
\usepackage{changepage,geometry}
\usepackage{amsmath,amsthm,amssymb,stmaryrd,dsfont,mathrsfs}
\usepackage{hyperref}
\usepackage[scr=esstix]{mathalpha}
\usepackage{picture,subfigure,subcaption,float,svg,pdflscape}
\usepackage{array,multirow,arydshln}
\usepackage{tikz}
\usepackage{tikz-cd}
\usepackage{pgfplots}
\pgfplotsset{compat=1.18}
\usepackage{todonotes}

\newtheorem{theorem}{Theorem}[section]
\newtheorem{lemma}[theorem]{Lemma}
\newtheorem{corollary}[theorem]{Corollary}
\newtheorem{proposition}[theorem]{Proposition}

\theoremstyle{remark}
\newtheorem*{remark}{Remark}

\theoremstyle{definition}
\newtheorem{definition}[theorem]{Definition}
\newtheorem{example}[theorem]{Example}

\newtheorem{theoremIntro}{Theorem}

\allowdisplaybreaks

\newcommand{\R}{\mathbb{R}}

\newcommand{\C}{\mathbb{C}}

\newcommand{\cC}{\mathcal{C}}

\newcommand{\cF}{\mathcal{F}}

\newcommand{\cH}{\mathcal{H}}

\newcommand{\bE}{{\boldsymbol{E}}}
\newcommand{\bH}{{\boldsymbol{H}}}
\newcommand{\bGamma}{{\boldsymbol{\Gamma}}}

\newcommand{\im}{\mathrm{im}}

\newcommand{\End}{\mathrm{End}}

\newcommand{\Id}{\mathrm{Id}}
\newcommand{\tr}{\mathrm{tr}}

\newcommand{\rk}{\mathrm{rk}}
\newcommand{\Vol}{\mathrm{Vol}}

\newcommand{\scal}[2]{\left<#1,#2\right>}
\newcommand{\abs}[1]{\left|#1\right|}
\newcommand{\norme}[1]{\left\|#1\right\|}

\renewcommand{\leq}{\leqslant}
\renewcommand{\geq}{\geqslant}

\newcommand{\tend}[2]{\underset{#1 \rightarrow #2}{\longrightarrow}}

\newcommand{\fonction}[4]{
\left\{
\begin{array}{ccc}
    \displaystyle #1 & \longrightarrow & \displaystyle #2\\
    \displaystyle #3 & \longmapsto & \displaystyle #4
\end{array}
\right.
}

\newcommand{\NN}{\mathrm{N}}
\newcommand{\2}{\mathrm{I\!I}}

\title[Non-Linear Non-Spectral Elliptic Hessian Equations on Riemannian Manifolds]{On Polynomial Complex Hessian Equations I:\\
Non-Linear Non-Spectral Elliptic Hessian Equations\\ on Riemannian Manifolds}
\author[Rémi Delloque]{Rémi Delloque}
\address{Rémi Delloque, Department of Mathematical Sciences, Chalmers University of Technology and University of Gothenburg, SE-412 96 Gothenburg, Sweden}
\email{delloque@chalmers.se}
\date{}

\begin{document}

\maketitle

\begin{abstract}
    We study a general class of fully non-linear elliptic Hessian equations on compact Riemannian manifolds. In particular, we replace the standard assumption that the equation is a symmetric function of the eigenvalues of the Hessian made by Caffarelli, Nirenberg and Spruck \cite{CNS} by a weaker one. After deriving \textit{a priori} estimates for solutions under suitable assumptions on the equation, we solve it with a continuity method.
\end{abstract}

\section{Introduction}

\paragraph{\textbf{Motivations.}} The purpose of this series of articles is to give a numerical criterion to solve polynomial Hessian equations on a compact Kähler manifold $X$ of the form
\begin{equation}\label{EQ:Equation P-critique}
    \sum_{k = 0}^n \gamma_{n - k}\frac{\chi^k}{k!} = 0,
\end{equation}
where the $\gamma_k$ are closed real $(n - k,n - k)$-forms and $\chi$ is a closed real $(1,1)$-form of fixed cohomology class. It can be interpreted as a particular $P$-critical equation \cite{DNST} on a line bundle and it includes famous complex Hessian equations such that the Monge--Ampère equation \cite{Yau}\cite{Collins_Shi}, the $J$-equation \cite{Donaldson}\cite{Chen} and the deformed Hermitian Yang--Mills equation \cite{LYZ}\cite{Chen}.

While preparing this article, the case where the coefficients of the equation have the form
$$
\gamma_k = c_k\frac{\omega^k}{k!}
$$
where $c_k$ is a constant (except maybe $c_n$) and $\omega$ is a Kähler form has been solved by Fu, Li, D. Zhang and Z. Zhang \cite{FLZZ} (at least when $c_0 \neq 0$). By ``solved", we mean that an algebro-geometric criterion of resolution has been found. We shall refer to this particular kind of equation as a \textit{spectral equation} since it can be expressed as a symmetric equation in the eigenvalues of $\chi$ with respect to $\omega$.

The only known example of solved non-spectral equation of the form \eqref{EQ:Equation P-critique} was solved by Fang and Ma \cite{Fang_Ma}. Their main assumption on the coefficients $\gamma_k$ is that $\gamma_0 > 0$ and the other $\gamma_k$ are weakly non-positive.

The aim of this series of articles is to find suitable assumptions on the coefficients $\gamma_k$ that cover both the Fu--Li--Zhang--Zhang and the Fang--Ma equations. Since these equations are non-linear and elliptic, the first step is to find a criterion to solve general non-linear elliptic equations. Therefore, we first develop the analytic framework needed for polynomial complex Hessian equations by proving a general existence theorem for non-spectral Hessian equations on compact Riemannian manifolds. The study of non-linear elliptic spectral equations goes back to the work of Caffarelli, Nirenberg and Spruck \cite{CNS} and was followed by \cite{Chou_Wang}\cite{Hou_Ma_Wu}\cite{Tosatti_Weinkove}\cite{Szekelyhidi} for example. These equations take the form
$$
\forall x \in X, \quad f(\lambda(\nabla^2u(x) + A(x))) = h(x).
$$
Here, $u$ is the unknown, $\nabla^2u(x) : T_xX \rightarrow T_xX$ its Hessian matrix at $x$, $A(x)$ another symmetric endomorphism of $T_xX$ (usually the identity), $h(x) \in \R$, $\lambda(A)$ denotes the eigenvalues of a symmetric endomorphism $A$ and $f$ is a concave elliptic function of $\R^n$, invariant under permutation of the variables. Under suitable assumptions, such an equation admits a solution if and only if it admits a \textit{$\cC$-sub-solution} \cite{Guan} (see also Definition \ref{DEF:Sous-solution}).

It is interesting to notice that the notion of $\cC$-sub-solution is pointwise and only depends on the level-sets $\{f = h(x)\}$. Therefore, in this paper, we don't define the equation we want to solve as a function of the Hessian but directly as the level set we want to reach, which have to be \textit{elliptic convex hypersurfaces} (see Definition \ref{DEF:Hypersurface Gamma-elliptique} below). This point of view is new and is the main contribution of this paper to the general theory of non-linear elliptic Hessian equations on Riemannian manifolds. It allows more flexibility in the elliptic equations one can solve when we have a sub-solution, even in the spectral case.

Moreover, we want to remove the assumption that the equation is spectral. However, the only known example of non-spectral elliptic Hessian equation for which this result was shown is the Fang--Ma equation \cite{Fang_Ma}. The main difficulty when we remove the spectrality assumption is that we lose helpful concavity terms. To solve this issue, we introduce the notion of \textit{almost spectral} equation (see Definition \ref{DEF:Presque spectrale}).

Once the main definitions are given, the principal issue is to obtain \textit{a priori} estimates for solutions of the equation under consideration. The $L^1$ estimate is very standard and follows the methods of Tosatti and Weinkove \cite{Tosatti_Weinkove}. The $\cC^0$ estimate is also standard and uses Blocki's \cite{Blocki} version of the Alexandroff--Bakelman--Pucci maximum principle. Usually for this kind of equations, the $\cC^1$ estimate is obtained as a consequence of the $\cC^2$ estimate and a Liouville like theorem. Here, we get directly the $\cC^1$ estimate by introducing an auxiliary function and by studying what happens at a point where the maximum of such a function is reached. The $\cC^2$ estimate is also obtained thanks to an auxiliary function but this time, it is rather standard and follows the method of many authors \cite{Chou_Wang}\cite{Hou_Ma_Wu}\cite{Tosatti_Weinkove}\cite{Szekelyhidi}. The additional concavity terms that we get thanks to the almost spectrality assumption are used for the $\cC^2$ estimate.

\paragraph{\textbf{$\Gamma$-elliptic convex hypersurfaces.}} Let $(X,g)$ be a compact Riemannian manifold of dimension $n$. Let $\bH \subset \End_S(TX,g)$ a smooth hypersurface of (the total space of) the bundle of $g$-symmetric endomorphisms of $TX$. For all sub-sets $U$ of $X$, we write $\bH|_U = \bH \cap \End_S(TX|_U,g)$, the restriction of $\bH$ over $U$ and $H_x = \bH|_{\{x\}} = \bH \cap \End_S(T_xX,g)$ when $x \in X$.

The purpose of this article is to give sufficient conditions to solve the general Hessian equation
\begin{equation}\label{EQ:Equation}
    \forall x \in X, \quad \nabla^2u(x) \in H_x,
\end{equation}
where $\nabla^2u$ is the Hessian endomorphism of $u$ with respect to $g$, seen as a section of $\End_S(TX,g)$.

\begin{definition}\label{DEF:Hypersurface Gamma-elliptique}
    Let $(E,\scal{\cdot}{\cdot})$ be a real Euclidean vector space and $\Gamma \subset E$ a non-empty open cone with strictly convex closure. We say that $H \subset E$ is a \textit{smooth $\Gamma$-elliptic convex hypersurface} of $E$ if
    \begin{enumerate}
        \item $H$ is a non-empty connected closed smooth hypersurface of $E$.
        \item For all $A \in H$, the unit normal vector $\NN_H(A)$ at $A$ lies in $\Gamma^\lor$.
        \item For all $A \in H$, the second fundamental form $\2_H(A)(\hat{A},\hat{A}) = -\scal{d\NN_H(A)\hat{A}}{\hat{A}}$ is non-negative on $T_AH$.
    \end{enumerate}
\end{definition}

\begin{remark}
    Notice that the second point is equivalent to saying that if $\varphi : E \rightarrow \R$ is a linear form whose kernel is $T_AH$, then all the $\varphi(P)$ for $P \in \overline{\Gamma}\backslash\{0\}$ are positive (up to replacing $\varphi$ by $-\varphi$). In particular, this condition doesn't depend on the choice of the scalar product. Similarly, the third point is equivalent to saying that when $B \in H$ tends toward $A$, $\varphi(B) \geq \mathrm{o}(\|B - A\|^2)$, which is also independent from the scalar product.
\end{remark}

\begin{remark}
    If $\NN$ is a unit normal vector of $H$ at some point $A$, so is $-\NN$. Since $\Gamma$ is open and non-empty, $\Gamma^\lor$ has a strictly convex closure so $\NN$ and $-\NN$ can't both lie in this cone. Therefore, the first point in Definition \ref{DEF:Hypersurface Gamma-elliptique} ensures a way to choose canonically the normal unit vector. In particular, the second fundamental form is also well-defined globally and $H$ must be orientable.
\end{remark}

\begin{figure}[H]
\begin{center}
\begin{tikzpicture}

  \begin{axis}[
    axis lines=middle,
    xmin=-2.2, xmax=4.2,
    ymin=-1.2, ymax=3.25,
    width=0.5\linewidth,
    height=0.375\linewidth,
    samples=200,
    domain=-2:4,
    xtick={-2,-1,0,1,2,3,4},
    ytick={-1,0,1,2,3,4},
    xticklabels={},
    yticklabels={},
    tick style={black},
    axis line style={black},
    clip=false
  ]


    \pgfmathsetmacro{\xone}{-0.5}
    \pgfmathsetmacro{\yone}{0.5*(-\xone + sqrt((\xone-2)^2 + 1))}

    \fill[
      blue,
      opacity=0.15
    ]
    (axis cs:\xone,\yone)
      rectangle
    (axis cs:4.2,3.25);

    \pgfmathsetmacro{\xtwo}{0.8}
    \pgfmathsetmacro{\ytwo}{0.5*(-\xtwo + sqrt((\xtwo-2)^2 + 1))}

    \fill[
      blue,
      opacity=0.15
    ]
    (axis cs:\xtwo,\ytwo)
      rectangle
    (axis cs:4.2,3.25);

    \pgfmathsetmacro{\xthree}{2.5}
    \pgfmathsetmacro{\ythree}{0.5*(-\xthree + sqrt((\xthree-2)^2 + 1))}

    \fill[
      blue,
      opacity=0.15
    ]
    (axis cs:\xthree,\ythree)
      rectangle
    (axis cs:4.2,3.25);
    
    \addplot[
      black,
      very thick,
      domain=-2:4
    ]
    {0.5*(-x + sqrt((x-2)^2 + 1))}
    node[near start,below left] {$H$};

  \end{axis}
\end{tikzpicture}
\end{center}
\caption{An example of a $\Gamma$-elliptic convex hypersurface. Here, $E = \R^2$ and $\Gamma = (\R_+^*)^2$.}
\end{figure}

In Section \ref{SEC:Géométrie convexe}, we give the main properties of these hypersurfaces. In particular, we build functions $F_P$ that depend on a parameter $P \in \Gamma$ whose zero locus is $H$ and that we use to construct solutions of \eqref{EQ:Equation} with a continuity method.

We show that $H$ cuts $E$ into a bottom left part $H^-$ and a convex upper right part $H^+$. We also show Guan's inequality \cite{Guan} in this context, which is very helpful to show the $\cC^2$ estimates for \eqref{EQ:Equation}.

Then, in Sub-section \eqref{SEC:Famille}, we define \textit{families of $\bGamma$-elliptic hypersurfaces} (see Definition \ref{DEF:Famille}). When $\bE = \End_S(TX,g)$ is the bundle of $g$-symmetric endomorphisms of the tangent bundle and $\bGamma = \End_S^{++}(TX,g)$ is the open sub-set of positive definite endomorphisms, we define \textit{almost spectral families} (see Definition \ref{DEF:Presque spectrale}). It is a technical assumption that depends on a tuple $\underline{M}$ of positive constants and that allows us to control the derivatives of a function $F : \End_S(TX,g) \rightarrow \R$ whose zero locus is $\bH$ and to obtain additional concavity terms (see Lemma \ref{LEM:Conséquences hypothèse}).

With these assumptions, we can control the solutions of \eqref{EQ:Equation} in function of explicit parameters.

\paragraph{\textbf{Main results.}} First, we show \textit{a priori} estimates for the elliptic equation \eqref{EQ:Equation} in Section \ref{SEC:Estimées}.

In the next theorem, $\bH_0$ is a family of $\End_S^{++}(TX,g)$-convex hypersurfaces such that for all $x$, $H_{0,x}$ is an affine hyperplane contained in $H_x^-$ (see Figure \ref{FIG:H_0}).

\begin{theoremIntro}\label{THEI:Estimées C2}
    There is a constant $C = C(X,g,\bH_0,\delta,R,\underline{M})$ such that a solution $u$ of \eqref{EQ:Equation} such that $\max_X u = 0$ verifies
    $$
    \norme{u}_{\cC^2} \leq C.
    $$
\end{theoremIntro}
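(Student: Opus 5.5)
The estimate will be proved in the order announced in the introduction: first $L^1$, then $\cC^0$, then $\cC^1$, then $\cC^2$. The hypothesis on $\bH_0$ plays the role of a $\cC$-sub-solution. Since $H_{0,x}$ is an affine hyperplane contained in $H_x^-$, it has the form $\{B : \scal{P_0(x)}{B} = h_0(x)\}$ with $P_0(x) \in \Gamma^\lor$. The idea is to turn the equation into a linear comparison.

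\emph{$L^1$ and $\cC^0$ estimates.} Since $\nabla^2 u(x) \in H_x \subset \overline{H_{0,x}^+}$, we get
$$\scal{P_0(x)}{\nabla^2u(x)} \geq h_0(x)$$
everywhere. This is a linear elliptic inequality $Lu \geq -C$, with $L$ uniformly elliptic because $P_0$ lies in the interior of the dual cone, with bounds depending on $\bH_0$ and $\delta$. Following Tosatti--Weinkove, the normalization $\max_X u = 0$ combined with the Green function of $L$ (or of the Laplacian after a conformal change) gives $\|u\|_{L^1} \leq C$. For the $\cC^0$ bound I would argue by contradiction at a minimum point $x_0$, with $u(x_0) = -M$. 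In a coordinate ball around $x_0$, set $v = u + \epsilon|x|^2$ and apply Blocki's version of the ABP estimate. The set of lower contact points, where $\nabla^2 v \geq 0$, has measure $\gtrsim \epsilon^n$. On that set the Hessian of $u$ is bounded below by $-2\epsilon\,\Id$, and it lies in $H_x$; the parameter $R$ and the convexity of $H_x^+$ should then bound it from above. This gives a lower bound on the measure of $\{u \leq -M/2\}$, which contradicts the $L^1$ bound for large $M$.

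\emph{$\cC^2$ estimate.} Write the equation as $F(x,\nabla^2 u) = 0$ using the functions $F_P$ of Section \ref{SEC:Géométrie convexe}. These are concave, with $dF$ in $\Gamma^\lor$. At the maximum of
$$\lambda_1(\nabla^2 u) + \varphi(|\nabla u|^2) + \psi(u),$$
one differentiates the equation twice. The almost spectral assumption, through Lemma \ref{LEM:Conséquences hypothèse}, bounds the derivatives of $F$ and supplies the concavity terms that replace the third-order terms usually available in the spectral case. Guan's inequality gives the dichotomy: either $\sum F^{ii} \geq \kappa$, or $\scal{dF}{\nabla^2\underline{u} - \nabla^2u} \geq \kappa \sum F^{ii}$. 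In both cases one obtains the bound
$$\lambda_1 \leq C\bigl(1 + \sup|\nabla u|^2\bigr).$$

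\emph{$\cC^1$ estimate and conclusion.} The paper announces a direct gradient estimate. I would take the maximum of $\log|\nabla u|^2 + \phi(u)$ with $\phi$ convex and decreasing, using the $\cC^0$ bound. At the maximum point the first-order condition replaces $\nabla^2u(\nabla u)$ by a multiple of $|\nabla u|^2\nabla u$, and ellipticity gives a sign on $\scal{dF}{\nabla^2(\cdot)}$. The difficulty is to produce a positive term $\phi''\,F^{ij}u_iu_j$ that dominates. Uniform ellipticity fails in general, so I would use the second point of Definition \ref{DEF:Hypersurface Gamma-elliptique} together with the bounds from $\underline{M}$. These should give a lower bound on $F^{ij}u_iu_j/|\nabla u|^2$ relative to $\sum F^{ii}$ when $|\nabla u|$ is large. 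I expect this gradient step to be the main obstacle. If the direct method fails, the fallback is a blow-up argument: combine $\lambda_1 \leq C(1 + \sup|\nabla u|^2)$ with a Liouville theorem for the limiting constant-coefficient hypersurface. Once the $\cC^1$ bound is in hand, the $\cC^2$ bound above becomes a genuine bound on $\lambda_1$. The lower bound on $\nabla^2u$ then follows from $\nabla^2u(x) \in H_x$, since $H_x \subset \overline{H_{0,x}^+}$ constrains the eigenvalues. Together these give $\norme{u}_{\cC^2} \leq C$.
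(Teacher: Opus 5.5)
Your $L^1$ and $\cC^0$ steps, and the final lower bound on $\lambda_n$ coming from $\bH_0$, are essentially the paper's. The gap is at the gradient estimate, and everything else rests on it, since your $\cC^2$ step only gives $\lambda_1\le C(1+\sup|\nabla u|^2)$.

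\textbf{The direct gradient method fails with your sign choice.} With $\log|\nabla u|^2+\phi(u)$ and $\phi$ decreasing, the first-order condition reads $\nabla^2u\,\nabla u=\frac{|\phi'|}{2}|\nabla u|^2\,\nabla u$. So $\nabla u$ is an eigenvector for a large \emph{positive} eigenvalue. The almost spectral hypothesis gives quantitative ellipticity only along the eigenvector of the \emph{smallest} eigenvalue: this is point 1 of Lemma \ref{LEM:Conséquences hypothèse}, $F_{,nn}\ge 1/M_{11}$, which comes from $f_1\le\cdots\le f_n$ for a concave symmetric $f$. Along large positive eigenvalues the linearisation may degenerate; already for Monge--Ampère, $F_{,11}\to 0$ as $\lambda_1\to\infty$. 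Hence no lower bound on $F^{ij}u_iu_j/|\nabla u|^2$ is available. The second point of Definition \ref{DEF:Hypersurface Gamma-elliptique} does not help: it is only pointwise qualitative positivity and gives nothing uniform as $\norme{\nabla^2u}\to\infty$.

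\textbf{The missing idea is to reverse the monotonicity.} The paper maximises $\norme{\nabla u}+\frac12(u+A)^2$ with $A=\norme{u}_{\cC^0}+\sqrt{M_{11}(nB_1+1)}$. Then $u+A\ge 1$, so the $u$-part is increasing. At the maximum, $\nabla u$ is an eigenvector with eigenvalue $\lambda_r=-(u+A)\norme{\nabla u}<0$. The good term then dominates: $\frac{1}{\norme{\nabla u}}\sum_iF_{,ii}\lambda_i^2\ge\frac{\lambda_n^2}{M_{11}\norme{\nabla u}}\ge\frac{(u+A)^2}{M_{11}}\norme{\nabla u}$. Point 2 ($\sum_iF_{,ii}\lambda_i\ge -M_{12}$) controls the term $(u+A)\sum_iF_{,ii}\lambda_i$. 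Point 3 ($|F_{k,}|\le M_{13}$), together with the differentiated equation, controls the third-order term.

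\textbf{Your fallback does not close the gap either.} A blow-up limit would be governed by the asymptotic cone of $H_{x_0}^+$. Here that is a general convex cone in $\End_S(T_{x_0}X,g)$, not invariant under orthogonal conjugation, and no Liouville theorem is available for it. This is precisely why the paper proves the gradient bound directly.

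\textbf{Two smaller corrections to your $\cC^2$ sketch.}
First, with the normalisation $F=F_\Id$ one has $\sum_iF_{,ii}=1$, so your alternative ``$\sum F^{ii}\ge\kappa$'' is vacuous. Proposition \ref{PRO:Guan} instead opposes $-\sum_iF_{,ii}\lambda_i\ge\kappa$ to uniform ellipticity $\inf_{\norme{P}=1}\scal{\nabla F}{P}\ge\kappa$. The paper uses it when $F_{,11}<\kappa$, which rules out the second alternative.
Second, the test function should involve $\ln\lambda_1$, not $\lambda_1$. Dividing by $\lambda_1$ turns the error $-M_{14}\norme{\nabla^2u}$ from point 4 of Lemma \ref{LEM:Conséquences hypothèse} into a bounded term. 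That bounded term can then be absorbed by the gain $\kappa\psi'$ in the degenerate case; without the logarithm it cannot.
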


Once the $\cC^2$ norm is controlled, the Evans--Krylov theory \cite{Evans}\cite{Krylov}\cite{Caffarelli_Cabré} and the elliptic regularity theory give us bounds on the derivatives of $u$ of arbitrary order. With these \textit{a priori} estimates, we set up a continuity method and prove the following existence results in Section \ref{SEC:Méthode continuité}.

\begin{theoremIntro}\label{THEI:Existence solution 1}
    Let $\bH \subset \End_S(TX,g)$ be an almost spectral family of smooth $\End_S^{++}(TX,g)$-elliptic convex hypersurfaces of $\End(TX,g)$ and assume there exists a $\cC$-sub-solution $\underline{u} : X \rightarrow \R$. Assume moreover that for all smooth $u,h : X \rightarrow \R$, we have
    $$
    \forall x \in X, \quad \nabla^2u(x) - h(x)\Id \in H_x \quad \Longrightarrow \quad \int_X h \, \Vol_g = 0.
    $$
    
    Then, there is a smooth $u : X \rightarrow \R$, unique up to a constant, such that for all $x \in X$,
    $$
    \nabla^2u(x) \in H_x.
    $$
\end{theoremIntro}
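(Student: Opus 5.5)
The plan is a continuity method joining an equation we can solve explicitly to $\nabla^2u(x) \in H_x$, where each equation along the path is modified by an unknown constant shift $h\,\Id$, $h \in \R$. That constant is exactly what the hypothesis $\int_X h\,\Vol_g = 0$ pins down at the endpoint. Using the functions $F_P$ of Section \ref{SEC:Géométrie convexe}, whose zero locus is $H_x$, I would write the equation as $F(x,\nabla^2u(x)) = 0$. Since $\nabla^2\underline{u}(x)$ is generally not in $H_x$, I would build a family of hypersurfaces $\bH_t$, $t\in[0,1]$, with $\bH_1 = \bH$ and $\nabla^2\underline{u}(x) \in H_{0,x}$. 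A natural choice is to interpolate the defining functions, $F_t(x,A) = F(x,A) - (1-t)F(x,\nabla^2\underline{u}(x))$. The family must stay almost spectral with uniform constants $\underline{M}$, and $\underline{u}$ must remain a $\cC$-sub-solution for every $t$, since the $\cC$-sub-solution condition depends only on the level sets. The problem at time $t$ is: find $(u_t,c_t)\in \cC^{2,\alpha}\times\R$ with $\nabla^2u_t(x) - c_t\Id \in H_{t,x}$ and $\int_X u_t = 0$. It is solved at $t=0$ by $u_0=\underline{u}-\mathrm{avg}$, $c_0=0$.

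\textbf{Openness.} The linearization at a solution is $(v,c)\mapsto L v - c\,\partial_{\Id}F$, where $L v = \langle \NN_{H}(\nabla^2u - c\Id), \nabla^2 v\rangle$ is elliptic because $\NN_H \in \Gamma^\lor$, the cone of positive definite endomorphisms. Since $L$ has no zeroth-order term, $\ker L$ consists of constants, and its cokernel is spanned by a positive function $\rho$ (adjoint plus strong maximum principle). The map $(v,c)\mapsto (Lv - c\,\partial_{\Id}F)$, restricted to $\int v = 0$, is therefore an isomorphism, because $\int \rho\,\partial_{\Id}F > 0$ by ellipticity. The implicit function theorem then gives openness.

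\textbf{Closedness.} I need a priori bounds. First bound $|c_t|$. If $c_t$ is very large, then $\nabla^2u_t$ at a maximum point of $u_t$ is $\leq 0$, so $\nabla^2u_t - c_t\Id$ lies in $H^-$ far away, a contradiction. Symmetrically at a minimum point, using that $H^+$ is a convex upper set and the uniform properties of the family. With $c_t$ bounded, the shifted family $\bH_t + c_t\Id$ still admits $\underline{u}$ as a $\cC$-sub-solution, at least if $c_t$ has the right sign or stays controlled. This is the delicate point. Theorem \ref{THEI:Estimées C2} then gives $\cC^2$ bounds, and Evans--Krylov and Schauder give higher bounds, hence closedness.

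At $t=1$ I obtain $\nabla^2u - c\Id \in H$ with $c$ constant, and the hypothesis forces $\int_X c = 0$, so $c=0$. \textbf{Uniqueness.} If $u,v$ are two solutions, then at a maximum of $u-v$, $\nabla^2u \leq \nabla^2v$ with both in $H_x$. Writing $\nabla^2 u - \nabla^2 v$ via the mean value theorem along $F$ gives a linear elliptic equation without zeroth-order term for $u-v$, so the strong maximum principle gives $u-v$ constant.

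\textbf{Main obstacle.} I expect the main obstacle to be keeping the sub-solution valid uniformly along the path while $c_t$ varies. The hypothesis of the theorem says nothing about $t<1$. So I would first try the alternative of taking the path itself to be translations $\bH+s\Id$: solve an auxiliary problem where the sub-solution is strict for a shifted family, and use the $\int h = 0$ hypothesis together with a degree or monotonicity argument in $c$ to show $c_t$ cannot escape to where $\underline{u}$ fails to be a $\cC$-sub-solution.
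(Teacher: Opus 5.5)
\textbf{Verdict: genuine gap.} Your path is the paper's own: after translating so that $\underline{u}=0$, the paper uses $F(x,\nabla^2u)=(1-t)F(x,0)$. Your openness argument is fine, and so is your maximum-principle proof of uniqueness. The latter is a valid alternative to the paper's argument, which integrates $\square u\geq 0$ against the positive generator of $\ker\square^*$.

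The gap is in closedness, exactly at the point you call delicate. It comes from your claim that the hypothesis says nothing about $t<1$, which is false. The hypothesis already forces $c_t=0$ for every $t$:
\begin{itemize}
\item Since $F(x,A-s\Id)=F(x,A)-s$, a solution $(u_t,c_t)$ of your time-$t$ problem satisfies $\nabla^2u_t(x)-h_t(x)\Id\in H_x$ with $h_t=c_t+(1-t)F(\cdot,\nabla^2\underline{u})$, which is smooth. So the hypothesis gives $\int_X h_t\,d\mu_g=0$.
\item Apply the hypothesis again to $u=\underline{u}$ and $h=F(\cdot,\nabla^2\underline{u})$. This is legitimate because $A-F(x,A)\Id\in H_x$ for every $A$, and it gives $\int_X F(x,\nabla^2\underline{u}(x))\,d\mu_g=0$.
\item Subtracting, $c_t\Vol_g(X)=0$, i.e.\ $c_t=0$ along the whole path.
\end{itemize}
This is exactly how the paper uses the assumption: inside the openness step, to show that the constant produced by the implicit function theorem vanishes.

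Without this observation your closedness step does not go through. A bound on $\abs{c_t}$ is not enough. For $c_t>0$ the translated hypersurface $H_{t,x}+c_t\Id$ can move so that $\nabla^2\underline{u}(x)$ leaves its set $\Omega$. Then the uniform constants $\delta,R$ needed for the $\cC^0$ and $\cC^2$ estimates are lost. The degree or monotonicity argument you mention as a fallback is not carried out.

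Once $c_t\equiv 0$, you still have to prove, not just require, two uniformity statements. Write $H^t_x=H_x+(1-t)F(x,\nabla^2\underline{u}(x))\Id$ for your $H_{t,x}$, as the paper does.
\begin{itemize}
\item The almost spectral tuple must be uniform in $t$. This is Lemma \ref{LEM:Translation hypersurface presque spectrale}.
\item $\underline{u}$ must be a uniform $\cC$-sub-solution for $H^t_x$. This follows from a pointwise monotonicity argument:
\begin{itemize}
\item Where $F(x,\nabla^2\underline{u}(x))\leq 0$, $H^t_x$ is $H_x$ translated in the direction $-\Id$. So $-F_P^t\leq -F_P^1$ at $\nabla^2\underline{u}(x)-\delta\Id$, and the constant $R$ for $\bH$ works.
\item Where $F(x,\nabla^2\underline{u}(x))>0$, one has $H^t_x\subset (H^0_x)^-$ and $\nabla^2\underline{u}(x)\in H^0_x$. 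So the constants for the time-$0$ family, for which $\underline{u}$ is an actual solution, work.
\end{itemize}
\end{itemize}
With these additions your argument becomes the paper's proof.
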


The additional assumption in Theorem \ref{THEI:Existence solution 1} can be obtained by a cohomological argument for some complex Hessian equations. However, it needs not hold in general. The last theorem gives an existence result of solutions of the Hessian equation
$$
\nabla^2u(x) \in H_x^c
$$
where $(\bH^c)_{a \leq c \leq b}$ are suitable almost spectral families of smooth $\End_S^{++}(TX,g)$-elliptic convex hypersurfaces and $c$ is an additional unknown.

\begin{theoremIntro}\label{THEI:Existence solution 2}
    Let $a < b$ be real numbers. Let $\boldsymbol{\cH}$ be a family of smooth $(\End_S^{++}(TX,g) \times [a,b])$-elliptic convex hypersurfaces in the bundle $\End_S(TX,g) \times [a,b] \rightarrow X \times [a,b]$. In particular, for all $c \in [a,b]$,
    $$
    \bH^c = \{(x,A,t) \in \boldsymbol{\cH} \mid t = c\}
    $$
    is a family of smooth $\End_S^{++}(TX,g)$-elliptic convex hypersurfaces on $\End_S(TX,g)$. We assume that there is a uniform tuple $\underline{M}$ such that each $\bH^c$ is almost spectral with tuple $\underline{M}$. We also assume that the map $c \mapsto (H_p^c)^-$ is non-decreasing for the inclusion.
    
    We assume moreover that there is a smooth function $\underline{u} : X \rightarrow \R$ which is a $\cC$-sub-solution for $\bH^b$ and that for all $x \in X$,
    $$
    \nabla^2\underline{u}(x) \in (H_x^a)^+ \cap (H_x^b)^- = \bigcup_{c \in [a,b]} H_x^c.
    $$
    
    Then, there is a smooth $u : X \rightarrow \R$ unique up to a constant and a $c \in [a,b]$ such that
    $$
    \forall x \in X, \quad \nabla^2u(x) \in H_x^c.
    $$
    If in addition, for all $c_1 \neq c_2$, $\bH^{c_1} \cap \bH^{c_2} = \emptyset$, then $c$ is unique too.
\end{theoremIntro}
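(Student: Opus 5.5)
We reduce Theorem \ref{THEI:Existence solution 2} to a continuity method in the pair $(u,c)$, using the a priori estimates of Theorem \ref{THEI:Estimées C2}, which hold uniformly in $c$ because the tuple $\underline{M}$ is uniform. For each $c$ we take the functions $F^c_P$ of Section \ref{SEC:Géométrie convexe} whose zero locus is $\bH^c$. Since $\boldsymbol{\cH}$ is a single smooth elliptic hypersurface in $\End_S(TX,g)\times[a,b]$, these functions can be chosen to depend smoothly on $c$, for instance as one function $\mathcal F_P(x,A,t)$ built from $\boldsymbol{\cH}$. The ellipticity of $\boldsymbol{\cH}$ with respect to the cone $\End_S^{++}\times[a,b]$ gives a sign for $\partial_t\mathcal F$. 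Combined with the monotonicity of $c\mapsto (H^c_x)^-$, this sign says: increasing $c$ enlarges $H^-$ and moves the level set upward.

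\textbf{Deformation.} The assumption $\nabla^2\underline u(x)\in (H^a_x)^+\cap(H^b_x)^-$ means that for each $x$ there is a unique $c(x)\in[a,b]$ with $\nabla^2\underline u(x)\in H^{c(x)}_x$. The idea is to prescribe the $c$-dependence as a variable level plus a constant shift. For $s\in[0,1]$, consider the equation
\[
\nabla^2u(x)\in H_x^{(1-s)c(x)+s\,\kappa},\qquad \kappa\in[a,b]\ \text{unknown}.
\]
At $s=0$, $u=\underline u$ is a solution for any $\kappa$. A cleaner alternative is to write the equation as $\mathcal F(x,\nabla^2u,\,(1-s)c(x)+s\kappa)=0$, where the unknown $\kappa$ plays the role that the constant $\int h$ plays in Theorem \ref{THEI:Existence solution 1}.

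\textbf{Openness.} Normalise $\int_X u\,\Vol_g=0$ and linearise at a solution $(u,\kappa)$. The linearisation has the form $(v,\dot\kappa)\mapsto L v+ s\,\partial_t\mathcal F\,\dot\kappa$, where $L$ is a second-order elliptic operator with no zeroth-order term. By the Fredholm alternative, $\ker L^*$ is spanned by a positive function $\rho$. Since $\partial_t\mathcal F$ has a fixed sign, $\int\rho\,\partial_t\mathcal F\neq0$, so the map is surjective onto $\cC^{k,\alpha}$ and injective on mean-zero $v$. The implicit function theorem then gives openness for $s>0$. The degenerate point $s=0$ needs an auxiliary first step, for example a preliminary continuity path that first makes the level $c(x)$ constant. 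I would instead apply Theorem \ref{THEI:Existence solution 1}-type reasoning to $\bH^{c}$ with the level fixed.

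\textbf{Closedness.} This is the main obstacle. Two things must be checked along the path. First, $\kappa$ stays in $[a,b]$: by the maximum principle at the max and min of $u-\underline u$, comparing with the monotone inclusions $(H^a)^+$, $(H^b)^-$, $\kappa$ cannot exit. Second, $\underline u$ must remain a $\cC$-sub-solution for each intermediate family. Since $\underline u$ is a $\cC$-sub-solution for $\bH^b$ and the sets $(H^c)^-$ increase with $c$, the asymptotic cones shrink. The $\cC$-sub-solution condition for $\bH^c$ with $c\le b$ should therefore follow from the one for $\bH^b$, and this is the delicate geometric point. With these two facts, Theorem \ref{THEI:Estimées C2} and Evans--Krylov give uniform $\cC^{k,\alpha}$ bounds, so the set of good $s$ is closed.

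\textbf{Uniqueness.} Suppose $(u_1,c_1)$ and $(u_2,c_2)$ are two solutions with $c_1\le c_2$. At the maximum of $u_1-u_2$ we have $\nabla^2u_1\le\nabla^2u_2$, which is compatible with the monotonicity. The strong maximum principle applied to the linear equation satisfied by $u_1-u_2$ forces $u_1-u_2$ to be constant. In that case $\nabla^2u_1\in H^{c_1}\cap H^{c_2}$, so $c_1=c_2$ when the hypersurfaces are disjoint. Uniqueness of $u$ up to a constant follows similarly for fixed $c$.
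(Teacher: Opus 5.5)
\textbf{There is a genuine gap in the existence part.} Your continuity path and your openness step both need non-degeneracy in the $c$-direction, and the hypotheses do not provide it. The interval $[a,b]$ belongs to the base $X\times[a,b]$, not to the cone, so ellipticity says nothing about $\partial_cF^c$. The only information is that $c\mapsto(H^c_x)^-$ is non-decreasing, i.e.\ $\partial_cF^c\le0$. This allows $\partial_cF^c\equiv0$ on whole subintervals of $[a,b]$. It also allows $\partial_cF^c=0$ at isolated values of $c$ even when the $\bH^c$ are pairwise disjoint, e.g.\ $\bH^c=\bH+(c-c_*)^3\Id$. Consequently:
(i) the level $c(x)$ with $\nabla^2\underline u(x)\in H^{c(x)}_x$ is in general neither unique nor smooth, so your deformation is not a smooth family of equations;
(ii) $\int_X\rho\,\partial_t\mathcal F\neq0$ can fail, so $(v,\dot\kappa)\mapsto Lv+s\,\partial_t\mathcal F\,\dot\kappa$ need not be onto and the implicit function theorem cannot be applied. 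At $s=0$ it degenerates anyway, and your fallback through Theorem \ref{THEI:Existence solution 1} requires its integral hypothesis, which is not assumed here;
(iii) even for a smooth level $\gamma=(1-s)c+s\kappa$, the mixed family $\{(x,A)\mid A\in H^{\gamma(x)}_x\}$ is not covered by the uniform tuple $\underline M$, which constrains each slice $\bH^c$ separately. Its function $F_\Id$ is $F^{\gamma(x)}(x,A)$, whose $x$-derivatives contain terms $\partial_cF^c\,d\gamma$ and $\partial_c^2F^c\,(d\gamma)^2$ that nothing controls for large $\norme{A}$. So Lemma \ref{LEM:Conséquences hypothèse} and Theorem \ref{THEI:Estimées C2} are not available along your path, and closedness is not established.

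\textbf{The missing idea} (and the paper's route) is never to solve for $c$ by the implicit function theorem. Keep $c$ constant on $X$, normalise $\underline u=0$, and add an independent constant $K$ to the unknowns: $F^c(x,\nabla^2u)=(1-t)F^c(x,0)+K$. Then $\partial_K=1$ absorbs the one-dimensional cokernel whatever $\partial_cF^c$ is. Each equation is a translate of $\bH^c$ by $h(x)\Id$, hence almost spectral with controlled tuple by Lemma \ref{LEM:Translation hypersurface presque spectrale}. At the extrema of $u$ one finds $t\min_XF^c(\cdot,0)\le K\le t\max_XF^c(\cdot,0)$ (Lemma \ref{LEM:Borne K}). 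Since $F^b(\cdot,0)\le0$, this gives $K\le0$ along $c=b$, which keeps $0$ a $\cC$-sub-solution with uniform $\delta,R$ and yields closedness up to $t=1$. One then lowers $c$ at $t=1$, and closedness persists as long as $K_{1,c}\le0$. If this holds down to $c=a$, then $F^a(\cdot,0)\ge0$ forces $K_{1,a}\ge0$, hence $K_{1,a}=0$. Otherwise $K_{1,c}$ becomes non-negative somewhere; since $c\mapsto K_{1,c}$ is continuous, the intermediate value theorem gives $c$ with $K_{1,c}=0$. Only weak monotonicity is used.

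Two minor remarks. First, the sub-solution point you call delicate is immediate: $H^c_x\subset(H^b_x)^-$ gives $F^c_P\ge F^b_P$. Second, your uniqueness argument works once made precise. For $c_1\le c_2$ one has $F^{c_2}(x,\nabla^2u_1)\le0=F^{c_2}(x,\nabla^2u_2)$, so $u_1-u_2$ is a supersolution of a linear elliptic equation without zeroth-order term. It is therefore constant by the strong minimum principle, not the maximum principle at its maximum. The paper instead integrates against the positive element of $\ker\square^*$ (Theorem \ref{THE:Unicité}).
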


\begin{figure}[H]
\begin{center}
\begin{tikzpicture}
  \begin{axis}[
    axis lines=middle,
    xmin=-2.2, xmax=4.2,
    ymin=-1.2, ymax=3.25,
    width=0.5\linewidth,
    height=0.375\linewidth,
    samples=200,
    domain=-2:4,
    xtick={-2,-1,0,1,2,3,4},
    ytick={-1,0,1,2,3,4},
    xticklabels={},
    yticklabels={},
    tick style={black},
    axis line style={black},
    clip=false
  ]

    \addplot[
      black,
      very thick,
      domain=-2:4
    ]
    {0.5*(-x + sqrt((x-2)^2 + 1))}
    node[midway,above right] {$H_x^b$};

    \addplot[
      black,
      very thick,
      domain=-2:4
    ]
    {0.5*(-x + sqrt((x-1.5)^2 + 1.5)) - 0.5};

    \addplot[
      black,
      very thick,
      domain=-2:4
    ]
    {0.5*(-x + sqrt((x-1)^2 + 2)) - 1}
    node[midway,below left] {$H_x^a$};

  \end{axis}
\end{tikzpicture}
\end{center}
\caption{Example of the $H_x^c$ with $x$ fixed when $c$ varies.}
\end{figure}

\paragraph{\textbf{Acknowledgment}} I thank Carl Tipler for his support and for helpful discussions about this problem. I also thank Lars Martin Sektnan for his feedback and for helpful discussions.

\paragraph{\textbf{AI use}} AI was used marginally to correct spelling mistakes, polish the introduction and generate the LaTex code for the figures. I am responsible for all mathematical contents and assume full responsibility in case of errors.

\section{Convex geometry}\label{SEC:Géométrie convexe}

In this section, we fix $(E,\scal{\cdot}{\cdot})$ a real Euclidean space, $\Gamma \subset V$ a non-empty open cone with strictly convex closure and $H \subset V$ a smooth $\Gamma$-elliptic convex hypersurface (see Definition \ref{DEF:Hypersurface Gamma-elliptique}). We show the main properties of these hypersurfaces. In particular, for each $P \in \Gamma$, we construct smooth functions $F_P : E \rightarrow \R$ whose zero locus is $H$ and we study the properties of these functions. We show that they are concave and $\Gamma$-elliptic in the following sense.

\begin{definition}\label{DEF:Fonction Gamma-elliptique}
    A function $F : E \rightarrow \R$ is $\Gamma$-elliptic if for all $A \in E$, $\nabla F(A) \in \Gamma^\lor$.
\end{definition}

This section serves as a preparation to obtain \textit{a priori} estimates for \eqref{EQ:Equation}. We also use the globally defined function $F_P$ to set up a continuity method in Section \ref{SEC:Méthode continuité}.

\subsection{The functions \texorpdfstring{$F_P$}{FP}}\label{SEC:Fonction F_P}

In this sub-section, we define the functions $F_P$ and show their main properties.

\begin{lemma}\label{LEM:TFI construction F_P}
    Let $A \in E$, $t \in \R$ and $P \in V$ such that $A - tP \in H$ and $\scal{\NN_H(A - tP)}{P} > 0$. There is a unique continuous germ of functions
    $$
    F_{A,t,P} : U_1 \times U_2 \longrightarrow \R
    $$
    with $U_1$ an open neighbourhood of $A$ in $E$ and $U_2$ an open neighbourhood of $P$ in $V$ such that $F_{A,t,P}(A,P) = t$ and for all $(B,Q) \in U_1 \times U_2$, $B - F_{A,t,P}(B,Q)Q \in H$. Moreover, $F_{A,t,P}$ is smooth and its gradient with respect to $B \in U_1$ is given at the origin by
    $$
    \nabla_BF_{A,t,P}(A,P) = \frac{\NN_H(A - tP)}{\scal{\NN_H(A - tP)}{P}}.
    $$
\end{lemma}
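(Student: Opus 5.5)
This is a direct application of the implicit function theorem to a local defining function of $H$. Since $H$ is a smooth hypersurface of $E$ (point 1 of Definition \ref{DEF:Hypersurface Gamma-elliptique}), near $A_0 := A - tP \in H$ there is a smooth function $\rho : W \rightarrow \R$ on an open neighbourhood $W$ of $A_0$ such that $H \cap W = \{\rho = 0\}$ and $d\rho \neq 0$ on $W$. Replacing $\rho$ by $\pm\rho$ and normalising, we may assume $\nabla\rho(A_0) = \NN_H(A_0)$. For instance, we can take $\rho$ to be the signed distance to $H$ near $A_0$, oriented by the canonical normal singled out in the remark following Definition \ref{DEF:Hypersurface Gamma-elliptique}. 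We then consider the smooth map
$$
\Phi : (B,Q,s) \longmapsto \rho(B - sQ),
$$
defined on an open neighbourhood of $(A,P,t)$ in $E \times V \times \R$. We have $\Phi(A,P,t) = 0$ and
$$
\partial_s\Phi(A,P,t) = -\scal{\nabla\rho(A_0)}{P} = -\scal{\NN_H(A_0)}{P} < 0
$$
by hypothesis.

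The implicit function theorem then gives open neighbourhoods $U_1 \ni A$, $U_2 \ni P$, an interval $I \ni t$ and a unique smooth function $F_{A,t,P} : U_1 \times U_2 \rightarrow I$ with $F_{A,t,P}(A,P) = t$ and $\Phi(B,Q,F_{A,t,P}(B,Q)) = 0$. Since $B - F_{A,t,P}(B,Q)Q \in W$, this means exactly that $B - F_{A,t,P}(B,Q)Q \in H$. Differentiating the identity $\rho(B - F(B,Q)Q) = 0$ in $B$ at $(A,P)$ gives
$$
\nabla\rho(A_0) - \scal{\nabla\rho(A_0)}{P}\,\nabla_BF = 0,
$$
hence the stated formula $\nabla_BF_{A,t,P}(A,P) = \NN_H(A-tP)/\scal{\NN_H(A-tP)}{P}$.

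The only delicate point is the uniqueness claim for \emph{continuous} germs, since the implicit function theorem gives uniqueness only among functions with values in the small interval $I$. I would argue as follows. Let $G$ be another continuous function on a neighbourhood of $(A,P)$ with $G(A,P) = t$ and $B - G(B,Q)Q \in H$. By continuity, $G$ takes values in $I$ and $B - G(B,Q)Q$ lies in $W$ on a smaller neighbourhood. There $\Phi(B,Q,G(B,Q)) = 0$, and the local uniqueness in the implicit function theorem (or strict monotonicity of $s \mapsto \Phi(B,Q,s)$ on $I$, after shrinking so that $\partial_s\Phi < 0$ throughout) forces $G = F_{A,t,P}$. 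So the two germs coincide. Uniqueness of the germ also shows that the construction does not depend on the choice of $\rho$.
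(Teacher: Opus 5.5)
Your proof is correct and follows the same route as the paper. Both take a local defining function of $H$ normalised so that its gradient at $A - tP$ is $\NN_H(A - tP)$, apply the implicit function theorem to $(B,Q,s)\mapsto\rho(B - sQ)$, and get the gradient formula by implicit differentiation. Your extra paragraph on uniqueness among merely continuous germs fills in a step the paper leaves implicit: you shrink the neighbourhood so that the values of the competing germ lie in the interval $I$ and the points $B - G(B,Q)Q$ lie in $W$.
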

\begin{proof}
It simply follows from the implicit function theorem. Since $H$ is a smooth hypersurface, there is a smooth function $G$ defined in an open neighbourhood $\tilde{U}$ of $A - tP$ in $E$ with real values such that $\nabla G(A - tP) \neq 0$ and $\tilde{U} \cap H = \{B \in \tilde{U} \mid G(B) = 0\}$. Moreover, up to multiplying $G$ by a non-zero constant, we may assume that $\nabla G(A - tP) = \NN_H(A - tP)$ is the unit normal vector in $\Gamma^\lor$.

By assumption, $\scal{\nabla G(A - tP)}{P} = \scal{\NN_H(A - tP)}{P} > 0$. Let $U_1 = \tilde{U} + tP$, which is an open neighbourhood of $A \in E$ and let the germ of smooth functions,
$$
\Psi : \fonction{(E,A) \times (E,P) \times (\R,t)}{(\R,0)}{(B,Q,s)}{G(B - sQ)}.
$$
We have $\Psi(A,P,t) = G(A - tP) = 0$ and $\frac{\partial\Psi}{\partial s}(A,P,t) = \scal{\nabla G(A - tP)}{-P} < 0$. The desired result follows directly from the implicit function theorem applied to $\Psi$. The gradient with respect to the first variable is given by
$$
\nabla_BF_{A,t,P}(A,P) = -\frac{\nabla_B\Psi(A,P,t)}{\frac{\partial\Psi}{\partial s}(A,P,t)} = \frac{\NN_H(A - tP)}{\scal{\NN_H(A - tP)}{P}}.
$$
\end{proof}

Fix $A_0 \in H$ and $I \in \Gamma$. For later use, we introduce
\begin{equation}\label{EQ:c(I)}
    c(I) = \inf_{R \in \Gamma^\lor, \norme{R} = 1} \scal{R}{I} > 0.
\end{equation}
We define $r_{\sup}$ to be the supremum of all positive real numbers $r$ such that there is a continuous function $F_I : B(A_0,r) \rightarrow \R$ such that $F_I(A_0) = 0$ and for all $x \in B(A_0,r)$,
\begin{equation}\label{EQ:Définition F_I}
    A - F_I(A)I \in H.
\end{equation}

\begin{figure}[H]
\begin{center}
\begin{tikzpicture}
  \begin{axis}[
    axis lines=middle,
    xmin=-2.2, xmax=4.2,
    ymin=-1.2, ymax=3.25,
    width=0.5\linewidth,
    height=0.375\linewidth,
    samples=200,
    domain=-2:4,
    xtick={-2,-1,0,1,2,3,4},
    ytick={-1,0,1,2,3,4},
    xticklabels={},
    yticklabels={},
    tick style={black},
    axis line style={black},
    clip=false
  ]

    \addplot[
      black,
      very thick,
      domain=-2:4
    ]
    {0.5*(-x + sqrt((x-2)^2 + 1))}
    node[midway,below left] {$H$};

    \addplot[
      only marks,
      mark=*,
      mark size=2.5pt,
      red
    ]
    coordinates {(-1,1)}
    node[above left] {$A$};

    \draw[
      ->,
      red,
      very thick
    ]
    (axis cs:-1,1) -- (axis cs:-0.451,1.549);

    \addplot[
      only marks,
      mark=*,
      mark size=2.5pt,
      green!60!black
    ]
    coordinates {(2,1)}
    node[above left] {$B$};

    \draw[
      ->,
      green!60!black,
      very thick
    ]
    (axis cs:2,1) -- (axis cs:1.112,0.112);

    \draw[
      ->,
      black,
      very thick
    ]
    (axis cs:1,1.5) -- (axis cs:2,2.5)
    node[
      midway,
      above left
    ] {$I$};

  \end{axis}
\end{tikzpicture}
\end{center}
\caption{Illustration of the function $F_I$. Here, $F_I(A) < 0$ and $F_I(B) > 0$. $F_I = 0$ on $H$.}
\end{figure}

We want to show that $r_{\sup} = +\infty$ and $F_I$ is unique with this property. In particular, we show that it is well-defined on the whole $E$. Since $I \in \Gamma$ and by assumption on $H$, for all $A \in H$, we have $\scal{\NN_H(A)}{I} > 0$ so we will be able to use Lemma \ref{LEM:TFI construction F_P} with $P = I$.

\begin{lemma}\label{LEM:Unicité F_I}
    If $r > 0$ and $F_I,\tilde{F}_I : B(A_0,r) \rightarrow \R$ are continuous functions with $F_I(A_0) = \tilde{F}_I(A_0) = 0$ and they both satisfy \eqref{EQ:Définition F_I}, then $F_I = \tilde{F}_I$. Moreover, $F_I$ is smooth and for all $A \in B(A_0,r)$,
    $$
    \nabla F_I(A) = \frac{\NN_H(A - F_I(A)I)}{\scal{\NN_H(A - F_I(A)I)}{I}}
    $$
\end{lemma}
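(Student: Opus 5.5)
The plan is a connectedness (open–closed) argument on the ball $B(A_0,r)$, which is connected, combined with the local uniqueness and smoothness from Lemma \ref{LEM:TFI construction F_P}. Consider the set
$$
S = \{A \in B(A_0,r) \mid F_I(A) = \tilde{F}_I(A)\}.
$$
It contains $A_0$, and it is closed in $B(A_0,r)$ because $F_I$ and $\tilde{F}_I$ are continuous. It remains to show that $S$ is open; connectedness of the ball then gives $S = B(A_0,r)$.

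For openness, let $A \in S$ and set $t = F_I(A) = \tilde{F}_I(A)$. Then $A - tI \in H$. Since $I \in \Gamma$ and $\NN_H(A - tI) \in \Gamma^\lor\backslash\{0\}$, we get $\scal{\NN_H(A - tI)}{I} > 0$, as noted just before the statement. Apply Lemma \ref{LEM:TFI construction F_P} with $P = I$ and freeze the second variable at $Q = I$. This gives a smooth function $F_{A,t,I}(\cdot,I)$ on a neighbourhood $U_1$ of $A$ with value $t$ at $A$ and $B - F_{A,t,I}(B,I)I \in H$.

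The key point, and the place where care is needed, is that the lemma asserts uniqueness of a continuous germ in both variables $(B,Q)$. We need uniqueness among continuous functions of $B$ alone with $Q = I$ fixed. To get this I would re-run the implicit function theorem directly with $\Psi(B,s) = G(B - sI)$, where $\partial_s\Psi(A,t) < 0$. The implicit function theorem then yields neighbourhoods $U_1 \ni A$ and $(t-\varepsilon,t+\varepsilon)$ such that for $B \in U_1$, the equation $\Psi(B,s) = 0$ has exactly one solution $s$ in $(t-\varepsilon,t+\varepsilon)$ (shrinking $U_1$ so that $B - sI$ stays in the chart domain $\tilde U$ of $G$). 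By continuity, both $F_I(B)$ and $\tilde{F}_I(B)$ lie in $(t-\varepsilon,t+\varepsilon)$ for $B$ close to $A$, and both solve $\Psi(B,\cdot) = 0$, since $B - F_I(B)I \in H \cap \tilde U$. Hence they coincide near $A$, so $S$ is open. This localisation is the only real subtlety. $H$ may meet the line $B - \R I$ at other points far away, which is exactly why continuity and the agreement at $A_0$ are needed.

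Finally, the same local argument shows that near each $A$, $F_I$ coincides with the smooth function $F_{A,F_I(A),I}(\cdot,I)$, so $F_I$ is smooth. The gradient formula is read off at the base point from the formula in Lemma \ref{LEM:TFI construction F_P}:
$$
\nabla F_I(A) = \frac{\NN_H(A - F_I(A)I)}{\scal{\NN_H(A - F_I(A)I)}{I}},
$$
valid for every $A \in B(A_0,r)$.
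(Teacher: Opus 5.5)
Your proof is correct, but you reach uniqueness by a different route than the paper. The paper first uses Lemma~\ref{LEM:TFI construction F_P} to show that each of $F_I$ and $\tilde{F}_I$ is smooth near every point, with the stated gradient. It then concludes $F_I = \tilde{F}_I$ because both solve the same first-order equation $\nabla F(A) = \NN_H(A - F(A)I)/\scal{\NN_H(A - F(A)I)}{I}$ and agree at $A_0$ on a connected domain. You instead run an open--closed argument on the agreement set $S$, taking openness directly from the local uniqueness in the implicit function theorem.

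Your route is more elementary and more self-contained. The paper's appeal to uniqueness for a differential equation is left implicit: one would restrict to segments from $A_0$ and invoke Cauchy--Lipschitz. Moreover, the right-hand side is only defined where $A - tI \in H$. Making that rigorous means extending $\NN_H$ off $H$, or in effect falling back on exactly the local implicit-function uniqueness you use.

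You also make explicit a point the paper glosses over. Lemma~\ref{LEM:TFI construction F_P} asserts uniqueness of germs in $(B,Q)$, whereas what is needed is uniqueness with $Q = I$ frozen, and you handle the chart domain $\tilde{U}$ of $G$ correctly. The paper's ordering has the mild advantage of putting the gradient formula first, since that is what drives the Lipschitz extension argument that follows. Your argument delivers that formula just as well.
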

\begin{proof}
Let $A \in B(0,r)$. For all $B \in B(A_0,r)$ close enough to $A$, $B - F_I(B)I \in H$ and by continuity of $F_I$ at $A$ and Lemma \ref{LEM:TFI construction F_P}, we must have $F_I(B) = F_{A,I,F_I(A)}(B,I)$. In particular, $F_I$ is smooth near $A$ and
$$
\nabla F_I(A) = \frac{\NN_H(A - F_I(A)I)}{\scal{\NN_H(A - F_I(A)I)}{I}}.
$$

Symmetrically, $\tilde{F}_I$ is smooth near $A$ and
$$
\nabla\tilde{F}_I(A) = \frac{\NN_H(A - \tilde{F}_I(A)I)}{\scal{\NN_H(A - \tilde{F}_I(A)I)}{I}}.
$$
This shows that both $F_I$ and $\tilde{F}_I$ satisfy the same first order differential equation. Since moreover $F_I(A_0) = \tilde{F}_I(A_0) = 0$ and $B(A_0,r)$ is connected, we must have $\tilde{F}_I = F_I$.
\end{proof}

\begin{proposition}\label{PRO:F_I bien définie}
    We have $r_{\sup} = +\infty$ and $F_I$ is defined on the whole $E$. Moreover, for all $(A,t)$ such that $A - tI \in H$, we have $t = F_I(A)$. In other words, each affine line of direction $I$ crosses $H$ exactly once.
\end{proposition}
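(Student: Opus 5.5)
We argue by continuation along rays, using Lemma \ref{LEM:TFI construction F_P} for local existence and Lemma \ref{LEM:Unicité F_I} for uniqueness. First note $r_{\sup} > 0$. Indeed, $A_0 - 0\cdot I \in H$ and $\scal{\NN_H(A_0)}{I} > 0$, so Lemma \ref{LEM:TFI construction F_P} gives a smooth germ near $A_0$. By Lemma \ref{LEM:Unicité F_I}, the functions $F_I$ defined on balls $B(A_0,r)$ with $r < r_{\sup}$ agree on their common domain. They therefore glue to a smooth $F_I$ on $B(A_0,r_{\sup})$ satisfying \eqref{EQ:Définition F_I}, with
$$
\nabla F_I(A) = \frac{\NN_H(A - F_I(A)I)}{\scal{\NN_H(A - F_I(A)I)}{I}}.
$$

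The key estimate is a uniform gradient bound. Since $\NN_H$ is a unit vector in $\Gamma^\lor$, the constant \eqref{EQ:c(I)} gives
$$
\norme{\nabla F_I} \leq \frac{1}{c(I)}.
$$
So $F_I$ is $1/c(I)$-Lipschitz on $B(A_0,r_{\sup})$.

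Now suppose $r_{\sup} < +\infty$ and derive a contradiction. By the Lipschitz bound, $F_I$ extends continuously to the closed ball $\overline{B}(A_0,r_{\sup})$. Since $H$ is closed, the extension still satisfies $A - F_I(A)I \in H$ on the boundary sphere. At each boundary point $A$, the condition $\scal{\NN_H(A - F_I(A)I)}{I} > 0$ holds automatically, so Lemma \ref{LEM:TFI construction F_P} gives a local solution $F_{A,F_I(A),I}$ on a ball $B(A,\rho_A)$. This local solution agrees with $F_I$ on $B(A,\rho_A) \cap B(A_0,r_{\sup})$: that set is connected as the intersection of two convex sets, and we can argue as in the proof of Lemma \ref{LEM:Unicité F_I} near the point $A$.

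This step is the main obstacle. We need the balls $\rho_A$ to be uniform, or else a way to obtain a function on a strictly larger ball $B(A_0,r_{\sup}+\varepsilon)$. We handle it by compactness of the sphere: cover it by finitely many such balls and glue. Overlaps of two patches outside the original ball could in principle disagree. To rule this out, shrink the patches so that every pairwise intersection is convex and meets $B(A_0,r_{\sup})$, where both patches agree with $F_I$. By the ODE uniqueness argument, they then agree on the whole overlap. The glued function is continuous on $B(A_0,r_{\sup}+\varepsilon)$, vanishes at $A_0$ and satisfies \eqref{EQ:Définition F_I}, which contradicts the definition of $r_{\sup}$. Hence $r_{\sup} = +\infty$, and by Lemma \ref{LEM:Unicité F_I} $F_I$ is uniquely defined on all of $E$.

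For the last claim, fix $A$ and consider $\phi(t) = $ the signed position along the line $A - tI$. We use the identity
$$
F_I(B - sI) = F_I(B) - s,
$$
which holds because $s \mapsto F_I(B - sI) + s$ has derivative $1 - \scal{\nabla F_I}{I} = 0$ by the gradient formula. Now suppose $A - tI \in H$. The point $A' = A - tI$ lies in $H$, so both $F_I(A')$ and $0$ solve the defining relation at $A'$. Connecting $A'$ to $A_0$, and using connectedness of $H$ together with the local uniqueness in Lemma \ref{LEM:TFI construction F_P}, we want to show $F_I = 0$ on $H$. The set $\{B \in H \mid F_I(B) = 0\}$ is closed, contains $A_0$, and is open in $H$ by local uniqueness, because $B - 0\cdot I \in H$ gives a continuous germ equal to $0$. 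By connectedness it is all of $H$, so $F_I(A') = 0$. The identity then gives $F_I(A) = F_I(A') + t = t$, so each line of direction $I$ meets $H$ exactly once.
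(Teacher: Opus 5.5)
Your proposal is correct and takes essentially the same route as the paper. Both arguments use the $1/c(I)$ Lipschitz bound, extend $F_I$ continuously to the closed ball via closedness of $H$, continue it locally at boundary points with Lemma~\ref{LEM:TFI construction F_P} plus compactness to contradict the maximality of $r_{\sup}$, and finish with the open--closed argument on the connected $H$ together with $\scal{\nabla F_I}{I} = 1$. Your extra care about the patches agreeing on their overlaps (convex intersections that meet $B(A_0,r_{\sup})$) simply makes explicit the gluing step that the paper calls standard.
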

\begin{proof}
First of all, by Lemma \ref{LEM:TFI construction F_P} applied at $A_0$, we must have $r_{\sup} > 0$ with $F_I = F_{A_0,I,0}(\cdot,I)$ near $A_0$. Assume from now on that $r_{\sup} < +\infty$.

By the uniqueness result of Lemma \ref{LEM:Unicité F_I}, $F_I$ must in particular be defined on the whole $B(A_0,r_{\sup})$. By Lemma \ref{LEM:Unicité F_I} and \eqref{EQ:c(I)}, for all $A \in B(A_0,r_{\sup})$,
$$
\norme{\nabla F_I(A)} = \norme{\frac{\NN_H(A - F_I(A)I)}{\scal{\NN_H(A - F_I(A)I)}{I}}} \leq \frac{\norme{\NN_H(A - F_I(A)I)}}{c(I)} = \frac{1}{c(I)}.
$$
This shows that $F_I$ is globally Lipschitz so it extends to a continuous function on $\overline{B}(A_0,r_{\sup})$. Moreover, since $H$ is closed in $E$ by Definition \ref{DEF:Hypersurface Gamma-elliptique}, each $A - F_I(A)I$ is in $H$ even when $A$ is in the sphere $S(A_0,r_{\sup})$.

By applying Lemma \ref{LEM:TFI construction F_P} at each point of $S(A_0,r_{\sup})$ and by gluing the functions we obtain thanks to Lemma \ref{LEM:Unicité F_I}, $F_I$ can be extended in an open neighbourhood $U$ of $\overline{B}(A_0,r_{\sup})$. It is a standard result that $U$ must contain an open ball of centre $A_0$ and radius $r > r_{\sup}$. It contradicts the maximality of $r_{\sup}$ hence $r_{\sup} = +\infty$. As above, $F_I$ is well-defined on the whole $E$.

We have left to show that for all $A$ and $t$ with $A - tI \in H$, $t = F_I(A)$. Let
$$
\tilde{H} = \{A \in H \mid F_I(A) = 0\}.
$$
Clearly, $\tilde{H}$ is a closed sub-set of $H$ and it is non-empty since $A_0 \in \tilde{H}$. By Lemma \ref{LEM:TFI construction F_P}, $\tilde{H}$ is also open in $H$. Recall that by Definition \ref{DEF:Hypersurface Gamma-elliptique}, $H$ is connected hence $\tilde{H} = H$.

Therefore, if $(A,t) \in E \times \R$ is such that $A - tI \in H$, we have $F_I(A - tI) = 0$. By Lemma \ref{LEM:Unicité F_I}, for all $B \in E$, $\scal{\nabla F_I(B)}{I} = 1$. We deduce that
$$
F_I(A) = F_I(A - tI) + t = t.
$$
\end{proof}

\begin{remark}
    A consequence of Proposition \ref{PRO:F_I bien définie} is that $F_I$ is fully characterised by the equality $A - F_I(A)I \in H$ for all $A$. In particular, $F_I$ is independent of the choice of $A_0$.
\end{remark}

Let
$$
H^+ = \{A \in E \mid F_I(A) \geq 0\}, \qquad H^- = \{A \in E \mid F_I(A) \leq 0\}.
$$
By Proposition \ref{PRO:F_I bien définie}, $H = H^+ \cap H^-$. It is easy to see that the sets $H^+$ and $H^-$ are independent from the choice of $I \in \Gamma$. Indeed, if $P \in \Gamma$ and $A \in V$ is such that $F_I(A) \geq 0$, then by Lemma \ref{LEM:Unicité F_I},
\begin{align*}
    F_P(x) & = F_P(A - F_I(A)I) + \int_0^{F_I(A)} \scal{\nabla F_P(A - sI)}{I} \, ds\\
    & = \int_0^{F_I(x)} \frac{\scal{\NN_H(A - sI - F_P(A - sI)P)}{I}}{\scal{\NN_H(A - sI - F_P(A - sI)P)}{P}} \, ds\\
    & \geq 0.
\end{align*}

We used the fact that $F_P(A - F_I(x)I) = 0$ because $A - F_I(A) \in H$ and the integrand is positive because both $I$ and $P$ lie in $\Gamma$ and the normal vector at each point of $H$ lies in $\Gamma^\lor$. We deduce that $F_P(A) \geq 0$. Symmetrically, we see that the sign of $F_P$ is always the same as the sign of $F_I$.

\subsection{Convexity and the set \texorpdfstring{$\Omega_H$}{OmegaH}}

In this sub-section, we show convexity and concavity properties and we construct an open set $\Omega_H$ with important properties that will later characterise the space of \textit{$\cC$-sub-solutions} when we solve elliptic PDEs.

We start by computing the second derivative of $F_I$. Recall that for all $(A,\hat{A}) \in TE = E^2$,
$$
dF_I(A)\hat{A} = \scal{\nabla F_I(A)}{\hat{A}} = \frac{\scal{\NN_H(A - F_I(A)I)}{\hat{A}}}{\scal{\NN_H(A - F_I(A)I)}{I}}.
$$
Let $B = A - F_I(A)I \in H$. We have,
\begin{adjustwidth}{-5cm}{5cm}
\begin{align}\label{EQ:Hessienne F_I}
    d^2F_I(A)(\hat{A},\hat{A}) & = \frac{\scal{d\NN_H(A - F_I(A)I)(\hat{A} - (dF_I(A)\hat{A})I)}{\hat{A}}}{\scal{\NN_H(A - F_I(A)I)}{I}} \nonumber\\
    & - \frac{\scal{\NN_H(A - F_I(A)I)}{\hat{A}}}{\scal{\NN_H(A - F_I(A)I)}{I}}\frac{\scal{d\NN_H(A - F_I(A)I)(\hat{A} - (dF_I(A)\hat{A})I)}{I}}{\scal{\NN_H(A - F_I(A)I)}{I}} \nonumber\\
    & = \frac{\scal{d\NN_H(A)\hat{A}}{\hat{A}}}{\scal{\NN_H(A)}{I}} - (dF_I(A)\hat{A})\frac{\scal{d\NN_H(A)I}{\hat{A}}}{\scal{\NN_H(A)}{I}} \nonumber\\
    & - (dF_I(A)\hat{A})\left(\frac{\scal{d\NN_H(A)\hat{A}}{I}}{\scal{\NN_H(A)}{I}} - (dF_I(A)\hat{A})\frac{\scal{d\NN_H(A)I}{I}}{\scal{\NN_H(A)}{I}}\right) \nonumber\\
    & = \frac{1}{\scal{\NN_H(A)}{I}}\left(-\2_H(A)(\hat{A},\hat{A}) + 2(dF_I(A)\hat{A})\2_H(A)(I,\hat{A}) - (dF_I(A)\hat{A})^2\2_H(A)(I,I)\right) \nonumber\\
    & = -\frac{\2_H(A - F_I(A)I)(\hat{A} - (dF_I(A)\hat{A})I,\hat{A} - (dF_I(A)\hat{A})I)}{\scal{\NN_H(A - F_I(A)I)}{I}}.
\end{align}
\end{adjustwidth}

\begin{proposition}\label{PRO:F_I concave}
    $F_I$ is concave.
\end{proposition}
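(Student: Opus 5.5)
The plan is to read concavity directly off the Hessian formula \eqref{EQ:Hessienne F_I}. By Lemma \ref{LEM:Unicité F_I} and Proposition \ref{PRO:F_I bien définie}, $F_I$ is smooth on all of $E$, so it suffices to show that $d^2F_I(A)(\hat{A},\hat{A}) \leq 0$ for every $A \in E$ and every $\hat{A} \in E$. A smooth function on the convex set $E$ with everywhere negative semi-definite Hessian is concave: restrict it to any line and note that its second derivative there is non-positive.

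Fix $A$ and $\hat{A}$, and set $B = A - F_I(A)I \in H$ and $W = \hat{A} - (dF_I(A)\hat{A})I$. The first step is to check that $W \in T_BH$, i.e. that $\scal{\NN_H(B)}{W} = 0$, which is needed before we may apply point (3) of Definition \ref{DEF:Hypersurface Gamma-elliptique}. By Lemma \ref{LEM:Unicité F_I},
$$dF_I(A)\hat{A} = \frac{\scal{\NN_H(B)}{\hat{A}}}{\scal{\NN_H(B)}{I}}.$$
Substituting this into $W$ gives $\scal{\NN_H(B)}{W} = \scal{\NN_H(B)}{\hat{A}} - \scal{\NN_H(B)}{\hat{A}} = 0$. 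Equivalently, differentiating $F_I(A + s\hat{A})$ shows that $W$ is the velocity of the curve $s \mapsto A + s\hat{A} - F_I(A + s\hat{A})I$, which lies in $H$.

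Since $W \in T_BH$, point (3) of Definition \ref{DEF:Hypersurface Gamma-elliptique} gives $\2_H(B)(W,W) \geq 0$. The denominator $\scal{\NN_H(B)}{I}$ is positive because $\NN_H(B) \in \Gamma^\lor$ by point (2) and $I \in \Gamma$; indeed it is at least $c(I) > 0$ by \eqref{EQ:c(I)}. Formula \eqref{EQ:Hessienne F_I} then gives $d^2F_I(A)(\hat{A},\hat{A}) = -\2_H(B)(W,W)/\scal{\NN_H(B)}{I} \leq 0$, which proves concavity.

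The only delicate step is justifying \eqref{EQ:Hessienne F_I} itself. In the intermediate lines $d\NN_H$ is evaluated on vectors such as $I$ and $\hat{A}$, which need not be tangent to $H$, so these expressions only make sense after extending $\NN_H$. To avoid this issue, I would rederive the final formula from the curve picture. Let $\gamma(s) = A + s\hat{A} - F_I(A + s\hat{A})I \in H$, and differentiate the identity $\scal{\NN_H(\gamma(s))}{\hat{A}} = (dF_I(A + s\hat{A})\hat{A})\,\scal{\NN_H(\gamma(s))}{I}$ at $s = 0$. Only $d\NN_H(B)$ applied to the tangent vector $\gamma'(0) = W$ appears, and collecting terms yields $\scal{d\NN_H(B)W}{W} = (d^2F_I(A)(\hat{A},\hat{A}))\scal{\NN_H(B)}{I}$. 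This is exactly \eqref{EQ:Hessienne F_I}.
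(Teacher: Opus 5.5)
Your proof is correct and follows the paper's route: the paper's proof is the single line ``immediate consequence of \eqref{EQ:Hessienne F_I} and Definition \ref{DEF:Hypersurface Gamma-elliptique}''. Your check that $W=\hat{A}-(dF_I(A)\hat{A})I\in T_BH$ and your curve-based rederivation of \eqref{EQ:Hessienne F_I}, which applies $d\NN_H(B)$ only to tangent vectors, simply make explicit what the paper leaves implicit.
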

\begin{proof}
It is an immediate consequence of \eqref{EQ:Hessienne F_I} and Definition \ref{DEF:Hypersurface Gamma-elliptique}.
\end{proof}

As a consequence, since $H^+$ is the $0$ super-level set of $F_I$, it is convex.

\begin{corollary}\label{COR:H^+ convexe}
    $H^+$ is convex.
\end{corollary}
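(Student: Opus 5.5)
The plan is to read off convexity of $H^+$ directly from the concavity of $F_I$ established in Proposition \ref{PRO:F_I concave}. By Proposition \ref{PRO:F_I bien définie}, $F_I$ is defined on the whole of $E$, and by definition
$$
H^+ = \{A \in E \mid F_I(A) \geq 0\}.
$$
So $H^+$ is the $0$ super-level set of a concave function on a convex domain, and the statement reduces to the general fact that super-level sets of concave functions are convex.

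Concretely, I would take $A, B \in H^+$ and $\lambda \in [0,1]$. Concavity of $F_I$ gives
$$
F_I(\lambda A + (1-\lambda)B) \geq \lambda F_I(A) + (1-\lambda)F_I(B) \geq 0,
$$
since both $F_I(A)$ and $F_I(B)$ are non-negative. Hence $\lambda A + (1-\lambda)B \in H^+$, which is exactly convexity.

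The only point needing care is that concavity in Proposition \ref{PRO:F_I concave} is global. That proposition is deduced from the pointwise inequality $d^2F_I \leq 0$ in \eqref{EQ:Hessienne F_I}. For a $\cC^2$ function on the convex set $E$, a non-positive Hessian everywhere gives concavity along every segment by restricting to lines: $t \mapsto F_I(A + t(B-A))$ has a non-positive second derivative. So no real obstacle is expected.

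One could also note that $H^+$ does not depend on the choice of $I \in \Gamma$, as shown after Proposition \ref{PRO:F_I bien définie}. The convexity statement is therefore intrinsic to $H$.
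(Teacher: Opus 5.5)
Your proof is correct and is essentially the paper's own argument: the paper deduces the corollary in one line, observing that $H^+$ is the $0$ super-level set of $F_I$, which is concave on all of $E$ by Proposition \ref{PRO:F_I concave}. Your extra remark is correct: the pointwise bound $d^2F_I \leq 0$ from \eqref{EQ:Hessienne F_I} gives concavity along every segment of the convex domain $E$, so the concavity is global.
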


Then, we define the set $\Omega_H$ that will be useful later to define $\cC$-sub-solutions.

\begin{definition}
    We denote by $\Omega_H$ the set of all $A \in E$ such that
    $$
    (A + \Gamma) \cap H
    $$
    is bounded.
\end{definition}

\begin{figure}[H]
\begin{center}
\begin{minipage}{0.3\textwidth}
\begin{tikzpicture}
  \begin{axis}[
    axis lines=middle,
    xmin=-2.2, xmax=4.2,
    ymin=-1.2, ymax=3.25,
    width=\linewidth,
    height=0.75\linewidth,
    samples=200,
    xtick={-2,-1,0,1,2,3,4},
    ytick={-1,0,1,2,3,4},
    xticklabels={},
    yticklabels={},
    tick style={black},
    axis line style={black},
    clip=false
  ]

    \addplot[
      black,
      very thick,
      domain=-2:4
    ]
    {0.5*(-x + sqrt((x-2)^2 + 1))}
    node[near start,below left] {$H$};
    
    \fill[
      gray,
      opacity=0.15
    ]
    (axis cs:-2.2,-1)
      rectangle
    (axis cs:4.2,3.25)
    node[opacity=1,below left] {$\Omega_H$};
    
    \draw[
      dotted,
      gray,
      thick
    ]
    (axis cs:-2.2,-1) -- (axis cs:4.2,-1);
    
  \end{axis}
\end{tikzpicture}
\end{minipage}
\begin{minipage}{0.3\textwidth}
\begin{tikzpicture}
  \begin{axis}[
    axis lines=middle,
    xmin=-2.2, xmax=4.2,
    ymin=-1.2, ymax=3.25,
    width=\linewidth,
    height=0.75\linewidth,
    samples=200,
    xtick={-2,-1,0,1,2,3,4},
    ytick={-1,0,1,2,3,4},
    xticklabels={},
    yticklabels={},
    tick style={black},
    axis line style={black},
    clip=false
  ]

    \addplot[
      black,
      very thick,
      domain=0.5:4
    ]
    {1/(x - 0.25) - 1}
    node[near start,above right] {$H$};
    
    \fill[
      gray,
      opacity=0.15
    ]
    (axis cs:0.25,-1)
      rectangle
    (axis cs:4.2,3.25)
    node[opacity=1,below left] {$\Omega_H$};
    
    \draw[
      dotted,
      gray,
      thick
    ]
    (axis cs:0.25,-1) -- (axis cs:4.2,-1);
    
    \draw[
      dotted,
      gray,
      thick
    ]
    (axis cs:0.25,-1) -- (axis cs:0.25,3.25);
    
  \end{axis}
\end{tikzpicture}
\end{minipage}
\begin{minipage}{0.3\textwidth}
\begin{tikzpicture}
  \begin{axis}[
    axis lines=middle,
    xmin=-2.2, xmax=4.2,
    ymin=-1.2, ymax=3.25,
    width=\linewidth,
    height=0.75\linewidth,
    samples=200,
    xtick={-2,-1,0,1,2,3,4},
    ytick={-1,0,1,2,3,4},
    xticklabels={},
    yticklabels={},
    tick style={black},
    axis line style={black},
    clip=false
  ]

    \addplot[
      black,
      very thick,
      domain=-2:4
    ]
    {0.25*(sqrt(x^2 + 1) - 3*x) + 1}
    node[near start,below left] {$H$};
    
    \fill[
      gray,
      opacity=0.15
    ]
    (axis cs:-2.2,-1.2)
      rectangle
    (axis cs:4.2,3.25)
    node[opacity=1,below left] {$\Omega_H = \R^2$};
    
  \end{axis}
\end{tikzpicture}
\end{minipage}
\end{center}
\caption{Illustrations of the set $\Omega_H$.}
\end{figure}

\begin{lemma}\label{LEM:F_Q avec Q au bord}
    Let $Q \in \partial\Gamma\backslash\{0\}$ and $A \in V$.
    \begin{enumerate}
        \item If there is $t \in \R$ such that $A - tQ \in H$, then
        $$
        \lim_{P \rightarrow Q, P \in \Gamma} F_P(A) = t.
        $$
        \item If there is no $t \in \R$ such that $A - tQ \in H$, then
        $$
        \lim_{P \rightarrow Q, P \in \Gamma} F_P(A) = -\infty.
        $$
    \end{enumerate}
    Moreover, the set of all $(A,Q)$ such that point 1 holds is open in $E \times \partial\Gamma\backslash\{0\}$.
\end{lemma}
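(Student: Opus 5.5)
The plan is to use three earlier facts. First, the characterisation from Proposition \ref{PRO:F_I bien définie}: for $P \in \Gamma$, $F_P(A)$ is the unique $t$ with $A - tP \in H$. Second, the local implicit-function germs of Lemma \ref{LEM:TFI construction F_P}. Third, the concavity of Proposition \ref{PRO:F_I concave}. Throughout I use the remark after Definition \ref{DEF:Hypersurface Gamma-elliptique}: a normal vector $\NN_H(B)$ pairs positively with every element of $\overline{\Gamma}\backslash\{0\}$. In particular $\scal{\NN_H(B)}{Q} > 0$ for $Q \in \partial\Gamma\backslash\{0\}$, and the same holds for $\nabla F_I(B)$, which is a positive multiple of such a normal by Lemma \ref{LEM:Unicité F_I}.

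\emph{Point 1 and openness.} Suppose $A - tQ \in H$. Then $\scal{\NN_H(A - tQ)}{Q} > 0$, so Lemma \ref{LEM:TFI construction F_P} applies at $(A,t,Q)$. It gives a smooth germ $F_{A,t,Q}$ on $U_1 \times U_2$ with $B - F_{A,t,Q}(B,P)P \in H$ and $F_{A,t,Q}(A,Q) = t$. For $P \in \Gamma \cap U_2$, the uniqueness in Proposition \ref{PRO:F_I bien définie} (with $P$ in place of $I$) forces $F_P(A) = F_{A,t,Q}(A,P)$. Continuity of the germ then gives $F_P(A) \to t$ as $P \to Q$. The same germ proves openness: for $(B,Q') \in U_1 \times (U_2 \cap \partial\Gamma\backslash\{0\})$, the point $B - F_{A,t,Q}(B,Q')Q'$ lies in $H$, so $(B,Q')$ satisfies the hypothesis of point 1.

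\emph{Point 2.} Assume no $t$ satisfies $A - tQ \in H$, take any sequence $P_k \in \Gamma$ with $P_k \to Q$, and set $t_k = F_{P_k}(A)$, so that $A - t_kP_k \in H$. I first rule out bounded subsequences. If a subsequence of $(t_k)$ were bounded, a further subsequence would converge to some $t$. Since $H$ is closed, we would get $A - tQ \in H$, a contradiction. Hence $|t_k| \to \infty$.

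It remains to exclude $t_k \to +\infty$ along a subsequence; I expect this to be the main step. Fix $I \in \Gamma$. Since $F_I$ is concave (Proposition \ref{PRO:F_I concave}) and vanishes on $H$,
$$
0 = F_I(A - t_kP_k) \leq F_I(A) - t_k\scal{\nabla F_I(A)}{P_k}.
$$
Because $\nabla F_I(A)$ pairs strictly positively with $Q$, we have $\scal{\nabla F_I(A)}{P_k} \geq c > 0$ for $k$ large. This gives $t_k \leq F_I(A)/c$, so $(t_k)$ is bounded above. Combined with $|t_k| \to \infty$, this yields $t_k \to -\infty$. Since the sequence was arbitrary, $\lim_{P \to Q, P \in \Gamma} F_P(A) = -\infty$.
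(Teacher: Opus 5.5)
Your proof is correct and follows the paper's route. Point 1 and the openness come from the germ of Lemma \ref{LEM:TFI construction F_P} at $(A,t,Q)$, identified with $F_P(A)$ through the uniqueness in Proposition \ref{PRO:F_I bien définie}. Point 2 combines closedness of $H$, which rules out bounded subsequences of $F_{P_k}(A)$, with a concavity upper bound on $F_P(A)$ for $P$ near $Q$. The paper argues by contraposition and applies concavity of $F_P$ at a fixed $A_0 \in H$ rather than of $F_I$ at $A$, but the idea is the same; your concavity inequality is correctly oriented, whereas the paper's displayed one is written backwards.

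One small slip: from $t_k\scal{\nabla F_I(A)}{P_k} \leq F_I(A)$ and $\scal{\nabla F_I(A)}{P_k} \geq c$ you only get $t_k \leq \max\{F_I(A),0\}/c$. The bound $t_k \leq F_I(A)/c$ does not follow when $F_I(A) < 0$, but the weaker bound is still the upper bound you need.
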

\begin{proof}
We start with the first point. Let $t \in \R$ such that $A - tQ \in H$. Since $Q \in \overline{\Gamma}$ and $\NN_H(A - tQ) \in \Gamma^\lor$, $\scal{\NN_H(A - tQ)}{Q} > 0$. By Lemma \ref{LEM:TFI construction F_P}, we can construct a smooth function $F_{A,Q,t}$ near $(A,Q)$ such that for all $(B,P)$ close enough to $(A,Q)$,
$$
B - F_{A,Q,t}(B,P)P \in H.
$$
By Proposition \ref{PRO:F_I bien définie}, if moreover $P \in \Gamma$, we must have $F_{A,Q,t}(B,P) = F_P(B)$. By continuity of $F_{A,Q,t}(A,\cdot)$ at $Q$,
$$
\lim_{P \rightarrow Q, P \in \Gamma} F_P(A) = t,
$$
hence the first point of the lemma. Moreover, for all $(B,P) \in E \times \partial\Gamma$ close enough to $(A,Q)$, $B - F_{A,Q,t}(B,P)P \in H$ hence the openness of this property.

We then show the contraposition of the second point. Assume that
\begin{equation}\label{EQ:limsup > -infty}
    \limsup_{P \rightarrow Q, P \in \Gamma} F_P(A) > -\infty.
\end{equation}
Let $A_0 \in H$. Since $\NN_H(A_0) \in \Gamma^\lor$ and $\norme{\NN_H(A_0)} = 1$, there is a constant $\varepsilon > 0$ such that for all $P \in \Gamma$,
\begin{equation}\label{EQ:Borne inférieure ps}
    \scal{\NN_H(A_0)}{P} \geq \varepsilon\norme{P}.
\end{equation}
This $\varepsilon$ is the analogue of $c(I)$ for $\NN_H(A_0)$. By concavity of $F_P$ (Proposition \ref{PRO:F_I concave}), we have
\begin{align*}
    F_P(A) & \geq \scal{\nabla F_P(A_0)}{A} + F_P(A_0)\\
    & = \frac{\scal{\NN_H(A_0)}{A}}{\scal{\NN_H(A_0)}{P}} \textrm{ by Lemma \ref{LEM:Unicité F_I}},\\
    & \geq -\frac{\abs{\scal{\NN_H(A_0)}{A}}}{\varepsilon\norme{P}} \textrm{ by \eqref{EQ:Borne inférieure ps}}.
\end{align*}
We deduce that
\begin{equation}\label{EQ:limsup < +infty}
    \limsup_{P \rightarrow Q, P \in \Gamma} F_P(A) < +\infty,
\end{equation}
since $Q \neq 0$.

By \eqref{EQ:limsup > -infty} and \eqref{EQ:limsup < +infty}, we can construct a converging sequence $P_m \rightarrow Q$ such that $(F_{P_m}(A))$ converges in $\R$. Let $t$ be the limit. By closedness of $H$, $A - tQ \in H$, which proves the second point of the lemma.
\end{proof}

\begin{proposition}\label{PRO:Caractérisation Omega_H}
    For all $A \in E$, $A \in \Omega_H$ if and only if for all $Q \in \partial\Gamma\backslash\{0\}$, the point 1 in Lemma \ref{LEM:F_Q avec Q au bord} holds.
\end{proposition}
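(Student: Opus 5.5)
We prove the two implications separately. Throughout, we use that a point $B \in A + \Gamma$ lies in $H$ exactly when $B = A + sP$ with $s > 0$, $P \in \Gamma$, $\norme{P} = 1$, and $F_P(A) = -s$ (Proposition \ref{PRO:F_I bien définie}). Hence
$$
(A + \Gamma) \cap H = \{A - F_P(A)P \mid P \in \Gamma, \ \norme{P} = 1, \ F_P(A) < 0\}.
$$
So $A \in \Omega_H$ if and only if $F_P(A)$ is bounded below on the unit sphere of $\Gamma$ (the positive values contribute no points).

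\emph{($\Leftarrow$).} Assume point 1 of Lemma \ref{LEM:F_Q avec Q au bord} holds for every $Q \in \partial\Gamma\backslash\{0\}$. Consider $\phi(P) = F_P(A)$ on $\overline{\Gamma} \cap S$, with $S$ the unit sphere. On $\Gamma \cap S$ it is continuous; for instance, it is locally given by the smooth germ $F_{A,t,P}$ of Lemma \ref{LEM:TFI construction F_P}. On $\partial\Gamma \cap S$, Lemma \ref{LEM:F_Q avec Q au bord} point 1 says that $\phi$ extends by $t(Q)$. I would use the germ $F_{A,Q,t}(A,\cdot)$ from the proof of that lemma. It is defined and continuous on a full neighbourhood of $Q$, including nearby boundary points. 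By the openness statement of the lemma and uniqueness from Lemma \ref{LEM:TFI construction F_P}, it agrees with the extension at those nearby boundary points. Hence the extension is continuous on the compact set $\overline{\Gamma} \cap S$, so it is bounded below, and therefore $(A + \Gamma) \cap H$ is bounded. The delicate point is exactly this identification of the boundary values with a single continuous germ. One must also check that the $t$ is unique for each $Q$; this follows because the limit in point 1 is unique.

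\emph{($\Rightarrow$).} We argue by contraposition. Suppose some $Q \in \partial\Gamma\backslash\{0\}$ fails point 1; we may normalise $\norme{Q} = 1$. By point 2 of Lemma \ref{LEM:F_Q avec Q au bord}, $F_P(A) \to -\infty$ as $P \to Q$ in $\Gamma$. Choose unit vectors $P_m \in \Gamma$ with $P_m \to Q$. Then $B_m = A - F_{P_m}(A)P_m \in (A + \Gamma) \cap H$ for $m$ large, and $\norme{B_m - A} = \abs{F_{P_m}(A)} \to \infty$. So $(A + \Gamma) \cap H$ is unbounded and $A \notin \Omega_H$. A small care point here is that $F_P$ scales as $F_{\lambda P} = F_P/\lambda$. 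This is harmless because $Q$ is normalised and the $P_m$ are taken on the unit sphere.

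The main obstacle is therefore the continuity-up-to-the-boundary argument in ($\Leftarrow$). This is where we need the openness statement at the end of Lemma \ref{LEM:F_Q avec Q au bord}, together with the local uniqueness in Lemma \ref{LEM:TFI construction F_P}.
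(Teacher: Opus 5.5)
Your proof is correct. Your reformulation, that $A\in\Omega_H$ iff $P\mapsto F_P(A)$ is bounded below on $\Gamma\cap S$ because $(A+\Gamma)\cap H=\{A-F_P(A)P : P\in\Gamma\cap S,\ F_P(A)<0\}$, is the right bridge between the two conditions.

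Your $(\Rightarrow)$ direction is the paper's first paragraph: a $Q$ in case 2 of Lemma \ref{LEM:F_Q avec Q au bord} forces $F_P(A)\to-\infty$, which produces points of $(A+\Gamma)\cap H$ escaping to infinity.

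The difference is in $(\Leftarrow)$. The paper's second paragraph starts from a bound $R$ on $(A+\Gamma)\cap H$, that is, from $A\in\Omega_H$. It derives $-F_P(A)\leq (R+\norme{A})/\norme{P}$ and concludes that case 2 never occurs. That is a second, direct proof of $(\Rightarrow)$. The implication ``case 1 for every $Q$ $\Rightarrow$ $(A+\Gamma)\cap H$ bounded'' is never actually argued there. Yet Corollary \ref{COR:Propriétés Omega_H} relies on exactly this implication for the openness and convexity of $\Omega_H$. Your compactness argument on $\overline{\Gamma}\cap S$ supplies that missing direction.

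You can also bypass the step you flag as delicate, because continuity of the extended function is more than you need. It suffices that $\liminf F_P(A)>-\infty$ as $P\in\Gamma\cap S$ approaches any point of the compact set $\overline{\Gamma}\cap S$. At points of $\Gamma$ this follows from continuity of $P\mapsto F_P(A)$ (Lemma \ref{LEM:TFI construction F_P} together with Proposition \ref{PRO:F_I bien définie}). At points of $\partial\Gamma$ it holds because the limit in case 1 is finite. A finite subcover then gives a global lower bound, with no need to identify the boundary values with a single germ.
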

\begin{proof}
If for some $Q \in \partial\Gamma\backslash\{0\}$, the point 2 holds, then by this lemma, $F_P(A) \rightarrow -\infty$ when $P \rightarrow Q$. In particular, $A \in H^-$ so all the $F_P(A)$ are non-positive thus for all $P \in \Gamma$, $A - F_P(A)P \in (A + \Gamma) \cap H$ and
$$
\norme{A - F_P(A)P} \geq -F_P(A)\norme{P} - \norme{A} \tend{P}{Q} +\infty.
$$
We deduce that $A \notin \Omega_H$.

Else, there is a constant $R > 0$ such that for all $P \in \Gamma$, if $x + P \in H$, then $\norme{x + P} \leq R$. In this case, for all $P$,
$$
-F_P(A) \leq \frac{\norme{F_P(A)P}}{\norme{P}} \leq \frac{\norme{A - F_P(A)P} + \norme{A}}{\norme{P}} \leq \frac{R + \norme{A}}{\norme{P}}.
$$
It shows that $F_P(A)$ is bounded from above when $P$ approaches any $Q \in \partial\Gamma\backslash\{0\}$. In particular, the point 2 of Lemma \ref{LEM:F_Q avec Q au bord} never holds, which proves the proposition.
\end{proof}

\begin{corollary}\label{COR:Propriétés Omega_H}
    $\Omega_H$ is open in $E$, convex and it contains $H^+$.
\end{corollary}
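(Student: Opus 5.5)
We prove the three assertions of Corollary \ref{COR:Propriétés Omega_H} separately, relying mainly on Proposition \ref{PRO:Caractérisation Omega_H} and Lemma \ref{LEM:F_Q avec Q au bord}.

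\emph{Inclusion $H^+ \subset \Omega_H$.} Let $A \in H^+$, so $F_P(A) \geq 0$ for every $P \in \Gamma$. We use the characterisation of Proposition \ref{PRO:Caractérisation Omega_H}. Let $Q \in \partial\Gamma\backslash\{0\}$. If point 2 of Lemma \ref{LEM:F_Q avec Q au bord} held at $(A,Q)$, we would have $F_P(A) \to -\infty$ as $P \to Q$. This is impossible since $F_P(A) \geq 0$. Hence point 1 holds for every such $Q$, and $A \in \Omega_H$.

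\emph{Openness.} Fix $A \in \Omega_H$. By the last assertion of Lemma \ref{LEM:F_Q avec Q au bord}, the set $\mathcal{O}$ of pairs $(B,Q)$ satisfying point 1 is open in $E \times \partial\Gamma\backslash\{0\}$. The condition is invariant under $Q \mapsto \lambda Q$ for $\lambda > 0$, since $B - tQ = B - (t/\lambda)(\lambda Q)$. It therefore suffices to consider $Q$ in the compact set $K = \partial\Gamma \cap S(0,1)$. The slice $\{A\} \times K$ lies in the open set $\mathcal{O}$. A tube-lemma argument then gives $r > 0$ with $B(A,r) \times K \subset \mathcal{O}$. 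By Proposition \ref{PRO:Caractérisation Omega_H}, $B(A,r) \subset \Omega_H$.

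\emph{Convexity.} This is the main obstacle. We plan to show that $A,B \in \Omega_H$ implies $C = (1-s)A + sB \in \Omega_H$, by controlling $(C + \Gamma) \cap H$ directly.

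\begin{enumerate}
    \item Reformulate the definition. A point $C + P$ with $P \in \Gamma$ lies in $H$ exactly when $F_P(C + P) = 0$, i.e. $F_P(C) = -1$, because $\scal{\nabla F_P}{P} = 1$ (Lemma \ref{LEM:Unicité F_I}). So the intersection is $\{C - F_P(C)P : P \in \Gamma, F_P(C) < 0\}$.
    \item Bound $F_P$ from below. The second half of the proof of Proposition \ref{PRO:Caractérisation Omega_H} shows that $A \in \Omega_H$ exactly when $-F_P(A)\norme{P}$ is bounded above uniformly over $P \in \Gamma$. Concavity of $F_P$ (Proposition \ref{PRO:F_I concave}) gives $F_P(C) \geq (1-s)F_P(A) + sF_P(B)$. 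Multiplying by $\norme{P}$, the quantity $-F_P(C)\norme{P}$ is bounded above uniformly in $P$, by the maximum of the bounds for $A$ and $B$.
    \item Conclude boundedness. For $P$ with $F_P(C) < 0$, we get $\norme{C - F_P(C)P} \leq \norme{C} + \abs{F_P(C)}\norme{P}$, which is bounded. Hence $(C + \Gamma) \cap H$ is bounded and $C \in \Omega_H$.
\end{enumerate}

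Step 2 is where care is needed: the equivalence ``$A \in \Omega_H \iff \sup_{P \in \Gamma} -F_P(A)\norme{P} < \infty$'' must be stated precisely. The direction ``bounded intersection $\Rightarrow$ bound'' is the estimate already written in the proof of Proposition \ref{PRO:Caractérisation Omega_H}, with $R$ the bound on the intersection. The converse is step 3.
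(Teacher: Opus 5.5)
Your proof is correct. Openness is the paper's argument: its finite subcover of $\{Q\in\partial\Gamma \mid \norme{Q}=1\}$ is your tube lemma.

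For $H^+\subset\Omega_H$ the paper argues directly: $F_P(A)\geq 0$ for all $P$ forces $(A+\Gamma)\cap H=\emptyset$. Your reformulation in step 1 of the convexity part yields exactly this, so the detour through Proposition \ref{PRO:Caractérisation Omega_H} is unnecessary there.

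The genuine difference is convexity. The paper works along boundary directions. For $Q\in\partial\Gamma\backslash\{0\}$ with $A-sQ,\,B-tQ\in H$, convexity of $H^+$ (Corollary \ref{COR:H^+ convexe}) puts $\frac{A+B}{2}-\frac{s+t}{2}Q$ in $H^+\subset\Omega_H$. Hence a further translate along $Q$ meets $H$, and Proposition \ref{PRO:Caractérisation Omega_H} is used in both directions to get midpoint convexity.

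You instead prove the quantitative criterion $A\in\Omega_H\iff\sup_{P\in\Gamma}\bigl(-F_P(A)\norme{P}\bigr)<\infty$ and combine it with the concavity of each $F_P$. This has two advantages:
\begin{enumerate}
    \item it gives all convex combinations at once;
    \item it needs neither Lemma \ref{LEM:F_Q avec Q au bord} nor the implication ``point 1 for every $Q$ $\Rightarrow$ $A\in\Omega_H$''.
\end{enumerate}
That implication is not actually argued in the written proof of Proposition \ref{PRO:Caractérisation Omega_H}: both of its paragraphs establish the reverse implication.

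Your openness step, like the paper's, does use that implication, but your criterion supplies it. Suppose $-F_{P_m}(B)\to+\infty$ along unit $P_m\in\Gamma$, and extract $P_m\to Q$ in the unit sphere of $\overline{\Gamma}$. If $Q\in\Gamma$, this contradicts the continuity of $P\mapsto F_P(B)$ given by Lemma \ref{LEM:TFI construction F_P}. If $Q\in\partial\Gamma$, it contradicts point 1 of Lemma \ref{LEM:F_Q avec Q au bord}.

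The paper's route is shorter once the boundary characterisation is available, and it needs that characterisation for openness anyway.
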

\begin{proof}
If $A \in H^+$, then for all $P$, $F_P(x) \geq 0$ so $x + P \notin H$. We deduce that $(A + \Gamma) \cap H = \emptyset$ is bounded so $H^+ \subset \Omega_H$.

For the openness, let $A \in \Omega_H$. By Proposition \ref{PRO:Caractérisation Omega_H}, for all $Q \in \partial\Gamma\backslash\{0\}$, the couple $(A,Q)$ verifies the point 1 of Lemma \ref{LEM:F_Q avec Q au bord}. Moreover, this property is open so there is a neighbourhood $U_Q \times W_Q$ of $(A,Q)$ in $E \times \partial\Gamma\backslash\{0\}$ such that each $(B,P) \in U_Q \times W_Q$ also verify the point 1 of Lemma \ref{LEM:F_Q avec Q au bord}.

Let $K = \{Q \in \partial\Gamma \mid \norme{Q} = 1\}$. $K$ is compact and is covered by $(W_Q)_{Q \in K}$ so there is a finite sub-set $J$ of $K$ such that
$$
K \subset \bigcup_{j \in J} W_j.
$$
Let $U = \bigcap_{j \in J} U_j$, which is an open neighbourhood of $A \in E$.

For all $B \in U$ and all $Q \in \partial\Gamma\backslash\{0\}$, there is $j \in J$ such that $\frac{Q}{\norme{Q}} \in W_j$. Since $B \in U_j$, there is a $t \in \R$ such that
$$
B - \frac{t}{\norme{Q}}Q \in H.
$$
It is true for all $Q$ so $B \in \Omega_H$ by Proposition \ref{PRO:Caractérisation Omega_H} hence the openness of $\Omega_H$.

Finally, to see that $\Omega_H$ is convex, let $(A,B) \in \Omega_H^2$ and $Q \in \partial\Gamma\backslash\{0\}$. By Proposition \ref{PRO:Caractérisation Omega_H}, there are real numbers $(s,t)$ such that
$$
A - sQ \in H, \quad B - tQ \in H.
$$
By Corollary \ref{COR:H^+ convexe}, $H^+$ is convex so
$$
\frac{A + B}{2} - \frac{s + t}{2}Q \in H^+.
$$
Since $H^+ \subset \Omega_H$, there is a real number $r$ such that
$$
\frac{A + B}{2} - \left(\frac{s + t}{2} + r\right)Q \in H.
$$
It holds for any $Q$ so by Proposition \ref{PRO:Caractérisation Omega_H}, $\frac{A + B}{2} \in \Omega_H$ hence the convexity of $\Omega_H$.
\end{proof}

\subsection{Guan's inequality}

In this section, we extend Guan's inequality to non-spectral functions \cite[Theorem 2.18]{Guan} (see also \cite[Proposition 6]{Szekelyhidi}). Let $A_0 \in \Omega_H$. By Corollary \ref{COR:Propriétés Omega_H}, $\Omega_H$ is open so there is $\delta > 0$ such that $A_0 - \delta I \in \Omega_H$. By Proposition \ref{PRO:Caractérisation Omega_H},
\begin{equation}\label{EQ:Définition R}
    R = -\inf_{P \in \Gamma, \norme{P} = 1} F_P(A_0 - \delta I)
\end{equation}
is finite.

Let $D$ be a convex open neighbourhood of $H$ in $E$ and $F : D \rightarrow \R$ a smooth $\Gamma$-elliptic (see Definition \ref{DEF:Fonction Gamma-elliptique}) concave function such that each level set of $F$ is closed in $E$ and
$$
H = \{A \in D \mid F(A) = 0\}.
$$
In practice, we will only use the next proposition with $F = F_I$ but we show it for any $F$ for completeness.

\begin{proposition}\label{PRO:Guan}
    There is $\kappa > 0$ that only depends on $\delta$ and $R$ such that for all $A \in D \cap H^-$,
    $$
    -\scal{\nabla F(A)}{A - A_0} \geq \kappa\scal{\nabla F(A)}{I}, \quad \textrm{or} \quad \inf_{P \in \Gamma, \norme{P} = 1} \scal{\nabla F(A)}{P} \geq \kappa\scal{\nabla F(A)}{I}.
    $$
    Moreover, in any case,
    $$
    -\scal{\nabla F(A)}{A - A_0} \geq -R\scal{\nabla F(A)}{I}.
    $$
\end{proposition}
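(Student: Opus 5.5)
The proof will use only the concavity of $F$, tested against points of $H$ built from $A_0-\delta I$ and the quantity $R$ from \eqref{EQ:Définition R}.

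\textbf{Step 1: $F\leq 0$ on $D\cap H^-$.} For $A\in D\cap H^-$, the point $B=A-F_I(A)I$ lies in $H$ with $F_I(A)\leq 0$, so $B=A+\tau I$ with $\tau=-F_I(A)\geq 0$. Both $A$ and $B$ are in the convex set $D$, hence so is the segment $[A,B]$. Along this segment $t\mapsto F(A+tI)$ is non-decreasing, because $\scal{\nabla F}{I}\geq 0$ by $\Gamma$-ellipticity. Since $F(B)=0$, we get $F(A)\leq 0$. This is the only place where convexity of $D$ enters.

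\textbf{Step 2: the basic concavity inequality.} Fix $P\in\Gamma$ with $\norme{P}=1$ and set $s_P=-F_P(A_0-\delta I)$, so that $s_P\leq R$ by definition of $R$. Put $B_P=A_0-\delta I+s_PP$. By definition of $F_P$, $B_P\in H\subset D$, so $F(B_P)=0\geq F(A)$. Concavity of $F$ on $D$ then gives $\scal{\nabla F(A)}{B_P-A}\geq F(B_P)-F(A)\geq 0$, that is
$$
-\scal{\nabla F(A)}{A-A_0}\geq \delta\scal{\nabla F(A)}{I}-s_P\scal{\nabla F(A)}{P}.
$$
When $s_P<0$, the last term is non-negative since $\scal{\nabla F(A)}{P}\geq0$. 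In all cases we may therefore replace it by $-\max(s_P,0)\scal{\nabla F(A)}{P}\geq -R\scal{\nabla F(A)}{P}$, which yields
$$
-\scal{\nabla F(A)}{A-A_0}\geq\delta\scal{\nabla F(A)}{I}-R\scal{\nabla F(A)}{P}\qquad\textrm{for every unit } P\in\Gamma.
$$

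\textbf{Step 3: the dichotomy.} Set $\kappa=\delta/(1+R)$. Suppose the second alternative fails, i.e. $\inf_{P}\scal{\nabla F(A)}{P}<\kappa\scal{\nabla F(A)}{I}$. Then some unit $P\in\Gamma$ satisfies $\scal{\nabla F(A)}{P}\leq\kappa\scal{\nabla F(A)}{I}$; no attainment of the infimum is needed, only a strict inequality. Plugging this $P$ into Step 2 gives
$$
-\scal{\nabla F(A)}{A-A_0}\geq(\delta-R\kappa)\scal{\nabla F(A)}{I}=\kappa\scal{\nabla F(A)}{I},
$$
which is the first alternative.

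\textbf{Step 4: the bound valid in any case.} Take $P=I/\norme{I}$ in Step 2. This gives
$$
-\scal{\nabla F(A)}{A-A_0}\geq\Bigl(\delta-\tfrac{R}{\norme{I}}\Bigr)\scal{\nabla F(A)}{I}\geq-\tfrac{R}{\norme{I}}\scal{\nabla F(A)}{I}.
$$
This is the claimed $-R\scal{\nabla F(A)}{I}$ after normalising $\norme{I}=1$ (or absorbing $\norme{I}$ into the constant), which I expect is the paper's implicit convention.

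The main point to check is Step 1: that $F$, which is only given on $D$, has the correct sign on $D\cap H^-$. Every other step is a direct application of concavity at points of $H$, where $F$ is known to vanish.
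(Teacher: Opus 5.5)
Your argument is correct, and its core is the same as the paper's: concavity of $F$ at $A$, tested against the points $B_P=A_0-\delta I-F_P(A_0-\delta I)P\in H$, with $P$ along $I$ for the last bound. The difference is in how you extract the dichotomy.

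The paper assumes the first alternative fails with threshold $\delta/2$. It then uses concavity to get $F(A_0-\frac{\delta}{2}I)\leq 0$, hence $A_0-\delta I\in H^-\setminus H$ and $s_P=-F_P(A_0-\delta I)>0$ for all $P$. Dividing by $s_P\leq R$ gives $\kappa=\min\{\delta/2,\delta/(2R)\}$. This route carries the geometric picture that failure of the first alternative pushes $A_0-\delta I$ strictly below $H$.

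You instead keep the single inequality $-\scal{\nabla F(A)}{A-A_0}\geq\delta\scal{\nabla F(A)}{I}-R\scal{\nabla F(A)}{P}$ for every unit $P\in\Gamma$, whatever the sign of $s_P$. You then read off the dichotomy algebraically with $\kappa=\delta/(1+R)$. Your route only evaluates $F$ at $A$ and on $H$, so it uses nothing beyond $F$ on $D$. The paper, by contrast, evaluates $F$ at $A_0-\frac{\delta}{2}I$, which need not lie in $D$ for a general $F$; this is harmless for $F=F_I$. You also prove $F\leq 0$ on $D\cap H^-$, which the paper only asserts.

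Three minor points:
\begin{enumerate}
\item The detour through $\max(s_P,0)$ silently needs $R\geq 0$, and it is unnecessary: $s_P\leq R$ and $\scal{\nabla F(A)}{P}\geq 0$ already give $-s_P\scal{\nabla F(A)}{P}\geq -R\scal{\nabla F(A)}{P}$.
\item Step 3 genuinely needs $R\geq 0$, both for $\kappa>0$ and to multiply by $-R$. This matches the paper's convention that $R$ is positive, and it can always be arranged by enlarging $R$.
\item Your caveat in Step 4 is apt. The paper's use of $F_I(A_0-\delta I)\geq -R$ tacitly assumes $\norme{I}=1$ too. For $R\geq 0$ it suffices that $\norme{I}\geq 1$, which holds for $I=\Id$.
\end{enumerate}
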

\begin{figure}[H]
\begin{center}
\begin{tikzpicture}
  \begin{axis}[
    axis lines=middle,
    xmin=-2.2, xmax=4.2,
    ymin=-1.2, ymax=3.25,
    width=0.5\linewidth,
    height=0.375\linewidth,
    samples=200,
    xtick={-2,-1,0,1,2,3,4},
    ytick={-1,0,1,2,3,4},
    xticklabels={},
    yticklabels={},
    tick style={black},
    axis line style={black},
    clip=false
  ]

    \addplot[
      black,
      very thick,
      domain=-2:4
    ]
    {0.5*(-x + sqrt((x-2)^2 + 1))}
    node[midway,above right] {$H$};

    %
    \addplot[
      blue,
      domain=-2:2.162
    ]
    {
      0.5*(1 + sqrt(10))
      + (-0.5 - 3/(2*sqrt(10)))*(x + 1)
    };

    %
    \addplot[
      blue,
      domain=-2:4
    ]
    {
      (-2.8 + sqrt(1.64))/2
      + (-0.5 + 0.5/(sqrt(1.64)))*(x - 2.8)
    };

    \addplot[
      only marks,
      mark=*,
      mark size=2.5pt,
      blue
    ]
    coordinates {(-1,2.081)}
    node[above right] {$A$};

    \addplot[
      only marks,
      mark=*,
      mark size=2.5pt,
      blue
    ]
    coordinates {(2.8,-0.76)}
    node[above right] {$B$};
  \end{axis}
\end{tikzpicture}
\end{center}
\caption{Illustration of Proposition \ref{PRO:Guan}. We have uniform ellipticity at $A$ and Guan's inequality holds at $B$ if $\delta$ is small enough.}
\end{figure}
\begin{proof}
Assume without loss of generality that
\begin{equation}\label{EQ:Contradiction inégalité 1}
    -\scal{\nabla F(A)}{A - A_0} \leq \frac{\delta}{2}\scal{\nabla F(A)}{I},
\end{equation}
or else the first inequality holds with $\kappa = \frac{\delta}{2}$.

The assumption that $A \in H^-$ (and $F$ is $\Gamma$-elliptic) implies that $F(A) \leq 0$ so
\begin{align}
    F\!\left(A_0 - \frac{\delta}{2}v\right) & \leq F(A) + \scal{\nabla F(A)}{A_0 - \frac{\delta}{2}I - A} \textrm{ by concavity of $F$}, \nonumber\\
    & \leq -\scal{\nabla F(A)}{A - A_0} - \frac{\delta}{2}\scal{\nabla F(A)}{I} \textrm{ because } F(A) \leq 0, \nonumber\\
    & \leq 0 \textrm{ by \eqref{EQ:Contradiction inégalité 1}}. \label{EQ:A_0 - delta/2 I dans H^-}
\end{align}
We deduce that $A_0 - \frac{\delta}{2}I \in H^-$. In particular, $A_0 - \delta I \in H^-\backslash H$.

Let $P \in \Gamma$ with $\norme{P} = 1$ and let
$$
B = A_0 - \delta I - F_P(A_0 - \delta I)P \in H.
$$
Since $A_0 - \delta I \in H^-\backslash H$, we must have $F_P(A_0 - \delta I) < 0$.

By concavity of $F$ and because $A \in H^-$,
\begin{equation}\label{EQ:Inégalité gradient}
    0 = F(B) \leq F(A) + \scal{\nabla F(A)}{B - A} \leq \scal{\nabla F(A)}{B - A}.
\end{equation}

Therefore,
\begin{align*}
    \scal{\nabla F(A)}{P} & = \scal{\nabla F(A)}{\frac{B - A_0 + \delta I}{-F_P(A_0 - \delta I)}}\\
    & \geq \scal{\nabla F(A)}{\frac{A - A_0 + \delta I}{-F_P(A_0 - \delta I)}} \textrm{ by \eqref{EQ:Inégalité gradient}},\\
    & \geq \scal{\nabla F(A)}{\frac{\frac{\delta}{2}I}{-F_P(A_0 - \delta I)}} \textrm{ by \eqref{EQ:Contradiction inégalité 1}},\\
    & \geq \frac{\delta}{2R}\scal{\nabla F(A)}{I} \textrm{ by \eqref{EQ:Définition R}}.
\end{align*}

For the last inequality, choose $P = I$ in the definition of $B$. We have in any case
\begin{align*}
    -\scal{\nabla F(A)}{A - A_0} & \geq \scal{\nabla F(A)}{A_0 - B} \textrm{ by \eqref{EQ:Inégalité gradient}},\\
    & = \scal{\nabla F(A)}{\delta I + F_I(A_0 - \delta I)I}\\
    & \geq -R\scal{\nabla F(A)}{I} \textrm{ by \eqref{EQ:Définition delta et R}}.
\end{align*}
\end{proof}

\subsection{\texorpdfstring{Families of $\bGamma$}{Gamma}-elliptic convex hypersurfaces}\label{SEC:Famille}

Let $\Gamma$ be a non-empty open cone with strictly convex closure of $E = \R^r$ and $I_0 \in \Gamma$ a point. We endow $E$ with its canonical scalar product.

Let $(X,g)$ be a smooth Riemannian manifold with or without boundary endowed with a real Riemannian vector bundle $(\bE,h) \rightarrow X$ and set $r = \rk(\bE)$. Let $\bGamma \subset \bE$ an open sub-set and $I : X \rightarrow \bE$ a section of $\bE$. Given $U \subset X$ a sub-set, we denote by $\bGamma|_U = \bGamma \cap \bE|_U$ the restriction and when $x \in X$, we write $\Gamma_x = \bGamma|_{\{x\}} \cap E_x$.

Assume that for all $x \in X$, there is a local trivialisation $(e_1,\ldots,e_r)$ of $\bE$ in a neighbourhood $U$ of $x$ in $X$ which is orthonormal at $x$ such that the following diagram commutes.
\begin{equation}\label{EQ:Diagramme commutatif cone}
\begin{tikzcd}
    U \ar[equal]{r}\ar["I|_U"']{d} & U \ar["(\Id_U{,}I_0)"]{d}\\
    \bGamma|_U \ar{r}\ar[hook]{d} & U \times \Gamma \ar[hook]{d}\\
    \bE|_U \ar{r}\ar{d} & U \times E \ar{d}\\
    U \ar[equal]{r} & U
\end{tikzcd}
\end{equation}
The horizontal arrows are given by the trivialisation $(e_1,\ldots,e_r)$.

\begin{example}
    We give three examples of tuples $(E,\Gamma_0,I_0,\bE,\bGamma,I)$ that satisfy this property.
    \begin{enumerate}
        \item $E = \R^n$, $\Gamma_0 = (\R_+^*)^n$, $I_0 = (1,\ldots,1)$, $\bE = X \times \R^n$ is the trivial bundle, $\bGamma = X \times \Gamma_0$, $I : x \mapsto (x,I_0)$.
        \item $E = \End_S(\R,n)$ is the space of $n \times n$ symmetric matrices, $\Gamma_0 = \End_S^{++}(\R,n)$ the cone of positive definite matrices, $I_0 = \Id_n$ the identity, $\bE = \End_S(TX,g)$ the bundle of $g$-symmetric endomorphisms of the tangent bundle of $X$, $\bGamma = \End_S^{++}(TX,g)$ the open sub-set of positive definite endomorphisms and $I = \Id$ the identity section. We could also replace $(TX,g)$ by another bundle equipped with a metric.
        \item Assume in this example that $(X,g)$ is a complex Hermitian manifold of complex dimension $n$. $E = \End_H(\C,n)$ is the space of $n \times n$ complex Hermitian matrices, $\Gamma_0 = \End_H^{++}(\C,n)$ the cone of positive definite matrices, $I_0 = \Id_n$ the identity, $\bE = \End_H(TX,g)$ the bundle of $g$-Hermitian endomorphisms of the tangent bundle of $X$, $\bGamma = \End_H^{++}(TX,g)$ the open sub-set of positive definite endomorphisms and $I = \Id$ the identity section. We could also replace $(TX,g)$ by another complex bundle equipped with a Hermitian metric (in this case, we don't need to assume that $X$ admits a complex structure).
    \end{enumerate}
\end{example}
We will mostly be interested here in the second example.

\begin{definition}\label{DEF:Famille}
    We say that $\bH \subset \bE$ is a \textit{family of smooth $\bGamma$-elliptic convex hypersurfaces} if it is a smooth hypersurface of $\bE$ which is transverse to each fibre and for all $x \in X$, $H_x$ is a $\Gamma_x$-elliptic convex hypersurface of $(E_x,h)$.
\end{definition}

For each $x \in X$, we denote by $F_I(x,\cdot) : E_x \rightarrow \R$ the function defined in Sub-section \ref{SEC:Fonction F_P} associated with $H_x$ and $I(x)$. It defines a global function
$$
F_I : \bE \longrightarrow \R
$$
that depends on $\bH$ and the section $I$. By Proposition \ref{PRO:F_I bien définie}, it is entirely characterised by the property that for all $(x,A) \in \bE$,
$$
A - F_I(x,A)I(x) \in H_x.
$$

Given $(x,A) \in \bE$, we have a natural decomposition
\begin{equation}\label{EQ:Décomposition orthogonale TE}
    T_{(x,A)}\bE = T_xX \oplus E_x.
\end{equation}
Moreover, this decomposition is orthogonal with respect to the natural metric on $\bE$ induced by $g$ and $h$. We denote by $\pi_{T_xX}$ and $\pi_{E_x}$ the projections in \eqref{EQ:Décomposition orthogonale TE}.

Assume now that $(x,A) \in \bH$. Since $H_x$ is a $\Gamma_x$-elliptic convex hypersurface of $E_x$, there is a canonical choice of unit normal vector $\NN_{H_x}(A) \in \Gamma_x^\lor$ at $A \in H_x$. We choose $\NN_\bH(x,A)$ to be the unit normal vector at $(x,A) \in \bH$ such that the projection $\pi_{E_x}\NN_\bH(x,A)$ is a positive multiple of $\NN_{H_x}(A)$. Notice that since we assumed $\bH$ to be transverse to $E_x$, this projection is non-zero.

Concretely, we have
\begin{equation}
    \NN_{H_x}(A) = \frac{\pi_{E_x}\NN_\bH(x,A)}{\norme{\pi_{E_x}\NN_\bH(x,A)}}.
\end{equation}
The norm on $E_x$ is induced by $h$.

Since we have a canonical choice of unit normal vector, we can define the associated second fundamental form.
$$
\2_\bH(x,A)((\hat{x},\hat{A}),(\hat{x},\hat{A})) = -\scal{d\NN_\bH(x,A)(\hat{x},\hat{A})}{(\hat{x},\hat{A})}
$$
Here, $(\hat{x},\hat{A}) \in T_{(x,A)}\bH$ and we use the orthogonal decomposition \eqref{EQ:Décomposition orthogonale TE}. Notice that $\hat{A} \in T_AH_x$.

\begin{proposition}\label{PRO:Différentielles F_I}
    $F_I$ is smooth and its gradient and second differential at each point $(x,A) \in \bE$ are given by
    $$
    \nabla F_I(x,A) = \frac{\NN_{\bH}(x,A - F_I(x,A)I(x))}{\scal{\NN_{\bH}(x,A - F_I(x,A)I(x))}{I(x)}},
    $$
    \begin{align*}
        & \quad\ d^2F_I(x,A)((\hat{x},\hat{A}),(\hat{x},\hat{A}))\\
        & = -\frac{\2_\bH(x,A - F_I(x,A)I(x))\Big((\hat{x},\hat{A} - (dF_I(x,A)(\hat{x},\hat{A}))I(x)),(\hat{x},\hat{A} - (dF_I(x,A)(\hat{x},\hat{A}))I(x))\Big)}{\scal{\NN_{\bH}(x,A - F_I(x,A)I(x))}{I(x)}}.
    \end{align*}
\end{proposition}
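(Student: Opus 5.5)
We follow the proofs of Lemmas \ref{LEM:TFI construction F_P} and \ref{LEM:Unicité F_I}, now letting the base point vary. Fix $(x_0,A_0) \in \bE$ and set $t_0 = F_I(x_0,A_0)$ and $B_0 = A_0 - t_0I(x_0) \in H_{x_0}$. Since $\bH$ is a smooth hypersurface of $\bE$, near $(x_0,B_0)$ it is the zero set of a smooth function $G$ on $\bE$ with $\nabla G(x_0,B_0) = \NN_\bH(x_0,B_0)$. Transversality to the fibres means exactly that $\pi_{E_{x_0}}\NN_\bH(x_0,B_0)$ is nonzero. It is a positive multiple of $\NN_{H_{x_0}}(B_0) \in \Gamma_{x_0}^\lor$, and $I(x_0) \in \Gamma_{x_0}$ (this is the content of diagram \eqref{EQ:Diagramme commutatif cone}). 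Hence $\scal{\NN_\bH(x_0,B_0)}{I(x_0)} > 0$, where $I(x_0)$ is seen as a vertical vector.

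We then apply the implicit function theorem to
$$
\Psi(x,A,s) = G(x,A - sI(x)),
$$
using a local trivialisation to make sense of $A - sI(x)$ in a neighbourhood. We have $\partial_s\Psi = -\scal{\nabla G}{I(x)} \neq 0$ at $(x_0,A_0,t_0)$. This gives a smooth function $\tilde F$ near $(x_0,A_0)$ with $A - \tilde F(x,A)I(x) \in H_x$. By the uniqueness in Proposition \ref{PRO:F_I bien définie}, applied fibrewise, $\tilde F = F_I$ near $(x_0,A_0)$, so $F_I$ is smooth. Implicit differentiation gives
$$
dF_I(\hat x,\hat A) = \frac{\scal{\nabla G}{(\hat x,\hat A) - s\,(dI\,\hat x)}}{\scal{\nabla G}{I}}.
$$
Here is a subtlety: $I$ is a section, so moving in $x$ changes $I(x)$. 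We choose the local trivialisation so that $I$ is constant, equal to $I_0$, which diagram \eqref{EQ:Diagramme commutatif cone} allows. The horizontal direction $T_xX$ in \eqref{EQ:Décomposition orthogonale TE} must then be interpreted compatibly with this trivialisation, or else we use that $I$ is parallel for the relevant connection. With that convention the $dI$ term vanishes, and $\nabla G = \NN_\bH$ gives the stated gradient formula.

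For the second differential we differentiate the gradient formula once more. The argument is the computation \eqref{EQ:Hessienne F_I} verbatim, with $E$ replaced by $T\bE$. Write $B = A - F_I(x,A)I(x)$. The differential of the map $(x,A) \mapsto (x,B)$ is
$$
(\hat x,\hat A) \longmapsto \bigl(\hat x,\hat A - (dF_I\,(\hat x,\hat A))I(x)\bigr) =: V.
$$
Note that $V \in T_{(x,B)}\bH$, because $(x,B)$ stays in $\bH$. Differentiating the quotient $\NN_\bH(x,B)/\scal{\NN_\bH(x,B)}{I}$ in the direction $(\hat x,\hat A)$ yields
$$
\frac{\scal{d\NN_\bH(V)}{(\hat x,\hat A)}}{\scal{\NN_\bH}{I}} - \frac{\scal{\NN_\bH}{(\hat x,\hat A)}\,\scal{d\NN_\bH(V)}{I}}{\scal{\NN_\bH}{I}^2}.
$$
Since $\scal{\NN_\bH}{(\hat x,\hat A)}/\scal{\NN_\bH}{I} = dF_I(\hat x,\hat A)$, this collapses to $\scal{d\NN_\bH(V)}{V}/\scal{\NN_\bH}{I}$. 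By definition this equals $-\2_\bH(V,V)/\scal{\NN_\bH}{I}$, which is the claimed formula.

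The main obstacle is the bookkeeping of the horizontal part. We must make sure that $I$ contributes no extra derivative term, and that the inner products and $d\NN_\bH$ are taken in the metric on $\bE$ for which \eqref{EQ:Décomposition orthogonale TE} is orthogonal. We would first handle this with a trivialisation in which $I \equiv I_0$, and check that the identity $\scal{d\NN_\bH(V)}{\NN_\bH} = 0$, obtained by differentiating $\norme{\NN_\bH}^2 = 1$, still holds in that setting. This identity is what lets us discard the normal component in the computation above.
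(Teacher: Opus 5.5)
Your proof is correct and takes the same route as the paper. The paper trivialises so that $I \equiv I_0$, reruns the implicit function argument of Lemma \ref{LEM:TFI construction F_P}, and repeats the computation \eqref{EQ:Hessienne F_I}; you do the same but more carefully, identifying the local solution with $F_I$ via Proposition \ref{PRO:F_I bien définie} and collapsing the quotient rule directly to $\scal{d\NN_\bH(V)}{V}/\scal{\NN_\bH}{I}$, so $d\NN_\bH$ is only applied to the tangent vector $V$. One point is superfluous: the identity $\scal{d\NN_\bH(V)}{\NN_\bH} = 0$ from your closing remark is never used in that collapse, which only needs $\scal{\NN_\bH}{(\hat x,\hat A)} = dF_I(\hat x,\hat A)\scal{\NN_\bH}{I}$.
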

\begin{proof}
By working in local coordinates, we may assume that $\bE = U \times E$, $\bGamma = U \times \Gamma$ and $I = I_0$ is constant. The proof for the first equality is the exact same as the one of Lemma \ref{LEM:TFI construction F_P} and the proof for the second equality follows from the same computation as \eqref{EQ:Hessienne F_I}.
\end{proof}

\subsection{Assumption on \texorpdfstring{$\bH$}{IH}}\label{SEC:Hypothèse}

From now on, we assume that $X$ is closed and $E = \End_S(TX,g)$ is the fibre bundle of $g$-symmetric endomorphisms of the tangent bundle $TX$. We denote by $\End_S^{++}(TX,g)$ the set of definite positive endomorphisms and $\Id$ the identity section. Notice that $\End_S(TX,g)$, $\End_S^{++}(TX,g)$ and $\Id$ satisfy \eqref{EQ:Diagramme commutatif cone}. We consider $\bH \subset E$ a family of $\End_S^{++}(TX,g)$-elliptic convex hypersurfaces and we want to obtain \textit{a priori} estimates on smooth solutions of \eqref{EQ:Equation}. Let $F = F_\Id$ so $\bH$ is the zero locus of $F$.

To make the analysis work in Section \ref{SEC:Estimées}, we need additional assumptions on $\bH$. Let $\mathfrak{S}_n$ be the group of permutations of $n$ elements that acts on $\R^n$ by permuting the coordinates. For the next theorem, for any Euclidean space $E$ of dimension $n$, we set
$$
\lambda : \End_S(E) \longrightarrow \R^n/\mathfrak{S}_n,
$$
the continuous map that associates to each symmetric endomorphism its eigenvalues. Let $\underline{M} = (M_1,M_2,M_3,M_4,M_5,M_6,M_7,M_8,M_9)$ be a tuple of positive constants. We make the following technical assumption on $\bH$.

\begin{definition}\label{DEF:Presque spectrale}
    We say that $\bH$ is \textit{almost spectral} with tuple $\underline{M}$ if it satisfies the following property.
    
    For all $(x_0,A_0) \in \bH$ with $\norme{A_0} \geq M_1$, we can write locally $\bH$ around $(x_0,A_0)$ as
    $$
    \bH = \{(x,A) \in \End_S(TX,g) \mid f(\lambda(B(x) + A)) + G(x,A) = 0\},
    $$
    where
    \begin{enumerate}
        \item $(\R_+^*)^n \subset \Gamma \subset \R^n$ is an $\mathfrak{S}_n$-invariant open cone and
        $$
        f : \Gamma \longrightarrow \R,
        $$
        is a smooth $\mathfrak{S}_n$-invariant symmetric $\R_+^n$-elliptic concave function.
        \item $B : (X,x_0) \longrightarrow \End_S(TX,g)$ is a smooth germ of sections such that $\lambda(B(x_0) + A_0) \in \Gamma$.
        \item We have
        $$
        \lim_{t \rightarrow 0^+} f(t\lambda(B(x_0) + A_0)) < f(\lambda(B(x_0) + A_0)) < \lim_{t \rightarrow +\infty} f(t\lambda(B(x_0) + A_0)).
        $$
        \item $G : (\End_S(TX,g),(x_0,A_0)) \longrightarrow \R$ is a smooth germ of functions such that $G(x_0,A_0) = -f(\lambda(B(x_0) + A_0))$.
        \item Let $\cF = \scal{\nabla f(\lambda(B(x_0) + A_0))}{\lambda(\Id)} = \sum_{k = 1}^n \frac{\partial f}{\partial\lambda_k}(\lambda(B(x_0) + A_0))$. We have the bounds
        \begin{equation}\label{EQ:Hypothèse bornes B}
            \norme{B(x_0)} \leq M_2, \qquad \norme{\nabla B(x_0)} \leq M_3,
        \end{equation}
        \begin{equation}\label{EQ:Hypothèse nabla G}
            \norme{\nabla_xG(x_0,A_0)} \leq M_4\cF, \qquad \norme{\nabla_AG(x_0,A_0)} \leq \frac{M_5}{\norme{A_0}}\cF,
        \end{equation}
        \begin{equation}\label{EQ:Hypothèse encadrement nabla_AG}
            -\left(1 - \frac{1}{M_6}\right)\nabla(f \circ \lambda)(B(x_0) + A_0) \leq \nabla_AG(x_0,A_0) \leq M_7\nabla(f \circ \lambda)(B(x_0) + A_0),
        \end{equation}
        \item For all $(\hat{x},\hat{A}) \in T_{(x_0,A_0)}\bH$,
        \begin{equation}\label{EQ:Hypothèse nabla2G}
            -\nabla^2G(x_0,A_0)((\hat{x},\hat{A}),(\hat{x},\hat{A})) \geq -M_8\cF\norme{A_0}\norme{\hat{x}}^2 + \frac{M_9}{2}d^2(f \circ \lambda)(B(x_0) + A_0)(\hat{A},\hat{A}).
        \end{equation}
        \item $M_7 + M_9 \leq 1$.
    \end{enumerate}
\end{definition}

\begin{remark}
    When we can choose $f$ and $B$ globally and independently of the point $(x_0,A_0)$ and when $G = 0$, we recover the standard frame of Caffarelli, Nirenberg and Spruck \cite{CNS}, for which the \textit{a priori} estimates are well-known \cite{Li}\cite{Urbas}\cite{Guan}\cite{Szekelyhidi}. Roughly speaking, Definition \ref{DEF:Presque spectrale} requires that the non-spectral part represented by $G$ is controlled by the spectral part.
\end{remark}

Let $(x_1,\ldots,x_n)$ be a local orthonormal frame around a point $x_0 \in X$. It induces a local trivialisation of $\End_S(TX,g)$ at $x_0$ given by the $A_{ii} = \frac{\partial}{\partial x_i} \otimes dx_i$ and the $A_{ij} = A_{ji} = \frac{\sqrt{2}}{2}\left(\frac{\partial}{\partial x_i} \otimes dx_j + \frac{\partial}{\partial x_j} \otimes dx_i\right)$ when $j > i$. This trivialisation is orthonormal at $x_0$ too and the identity section has constant coefficients in it.

We use the following notations for the covariant derivatives with respect to the Levi--Civita connection
$$
u_{i_1i_2\cdots i_r} = \nabla_{\frac{\partial}{\partial x_{i_r}}}\cdots\nabla_{\frac{\partial}{\partial x_{i_2}}}\nabla_{\frac{\partial}{\partial x_{i_1}}}u.
$$
Similarly, for $F : \End_S(TX,g) \rightarrow \R$,
$$
F_{k,} = \nabla_{\frac{\partial}{\partial x_k}}F, \qquad F_{,ij} = \nabla_{\frac{\partial}{\partial A_{ij}}}F.
$$
And for a real function $f : \R^n \rightarrow \R$, we also denote by
$$
f_{i_1i_2\cdots i_r} = \frac{\partial^rf}{\partial\lambda_{i_1}\partial\lambda_{i_2}\cdots\partial\lambda_{i_r}}.
$$

Let $U \subset \R^n$ a $\mathfrak{S}_n$-invariant open sub-set and $f : U \rightarrow \R$ a smooth $\mathfrak{S}_n$-invariant function. If $E$ is a Euclidean space of dimension $n$ and $A \in \End_S(E)$ a point that is diagonal with pairwise distinct eigenvalues $(\lambda_1,\ldots,\lambda_n)$ in an orthonormal basis say $(\hat{x}_1,\ldots,\hat{x}_n)$, we have for all indices $i,j,k,l$,
\begin{equation}\label{EQ:Dérivée première f o lambda}
    \frac{\partial(f \circ \lambda)}{\partial A_{ij}}(A) = \delta_{ij}f_i(\lambda(A))
\end{equation}
\begin{equation}\label{EQ:Dérivée seconde f o lambda}
    \frac{\partial^2(f \circ \lambda)}{\partial A_{ij}\partial A_{kl}}(A) = \delta_{ij}\delta_{kl}f_{ik}(\lambda(A)) + (1 - \delta_{ij})\delta_{il}\delta_{jk}\frac{f_i(\lambda(A)) - f_j(\lambda(A))}{\lambda_i - \lambda_j}
\end{equation}
See for example \cite{Spruck}.

\begin{lemma}\label{LEM:Conséquences hypothèse}
    If $\bH$ is almost spectral with tuple $\underline{M}$, there is a tuple $\underline{M}' = (M_{10},M_{11},M_{12},M_{13},M_{14},M_{15})$ that only depends on $\underline{M}$ and $n$ such that for all $(x_0,A_0) \in \bH$ with $\norme{A_0} \geq M_{10}$ which is diagonal in the basis $\frac{\partial}{\partial x_1}(x_0),\ldots,\frac{\partial}{\partial x_n}(x_0)$ with eigenvalues $\lambda_1 \geq \ldots \geq \lambda_n$ and $\lambda_1 > \lambda_n$,
    \begin{enumerate}
        \item $F_{,nn}(x_0,A_0) \geq \frac{1}{M_{11}}$,
        \item $\sum_{i = 1}^n F_{,ii}(x_0,A_0)\lambda_i \geq -M_{12}$,
        \item For all $k$, $\abs{F_{k,}(x_0,A_0)} \leq M_{13}$,
        \item For all $(\hat{x},\hat{A}) \in T_{(x_0,A_0)}\bH$,
        $$
        -d^2F(x_0,A_0)((\hat{x},\hat{A}),(\hat{x},\hat{A})) \geq -M_{14}\norme{A_0} + \frac{1}{\lambda_1 - \lambda_n}\sum_{i,j = 1}^n (F_{,ij}(x_0,A_0) - M_{15}\delta_{ij}F_{,11}(x_0,A_0))\hat{A}_{1i}\hat{A}_{1j}.
        $$
    \end{enumerate}
\end{lemma}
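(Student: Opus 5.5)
The idea is to compare $F = F_\Id$ with the local defining function $\Phi(x,A) = f(\lambda(B(x)+A)) + G(x,A)$ from Definition \ref{DEF:Presque spectrale}. Both functions vanish on $\bH$ near $(x_0,A_0)$. Their gradients are therefore proportional, and $\nabla F$ is normalised by $\scal{\nabla_A F}{\Id} = 1$ (Proposition \ref{PRO:Différentielles F_I}). Hence at $(x_0,A_0)\in\bH$
$$
\nabla F(x_0,A_0) = \frac{\nabla\Phi(x_0,A_0)}{\scal{\nabla_A\Phi(x_0,A_0)}{\Id}} .
$$
For the second derivative, differentiate the identity $\Phi(x, A - F(x,A)\Id)=0$ twice. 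On $T_{(x_0,A_0)}\bH$ we have $dF=0$, so
$$
d^2F(x_0,A_0)\big((\hat x,\hat A),(\hat x,\hat A)\big) = \frac{d^2\Phi(x_0,A_0)\big((\hat x,\hat A),(\hat x,\hat A)\big)}{\scal{\nabla_A\Phi}{\Id}} .
$$
The denominator is $\cF + \scal{\nabla_A G}{\Id}$. By \eqref{EQ:Hypothèse encadrement nabla_AG}, and since $\nabla(f\circ\lambda)\geq 0$, it lies between $\cF/M_6$ and $(1+M_7)\cF\leq 2\cF$. So every quantity of the form $\Phi$-derivative divided by $\cF$ can be transferred to $F$ at the cost of constants depending only on $\underline M$.

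I then take $M_{10}\geq M_1$ large. At a diagonal $A_0$ with $\norme{A_0}$ large, $B(x_0)$ is a perturbation bounded by $M_2$. I would use \eqref{EQ:Dérivée première f o lambda} at $B(x_0)+A_0$, assuming first that its eigenvalues are distinct and reducing to this case by perturbation. One could also absorb the off-diagonal part of $B$ by a rotation close to the identity, with angle $O(M_2/(\lambda_i-\lambda_j))$. This should be double-checked.

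\textbf{Item 3.} $F_{k,} = \Phi_{k,}/\scal{\nabla_A\Phi}{\Id}$. Here $\Phi_{k,} = \scal{\nabla(f\circ\lambda)}{\nabla_k B} + G_{k,}$ is bounded by $(M_3+M_4)\cF$ using \eqref{EQ:Hypothèse bornes B} and \eqref{EQ:Hypothèse nabla G}. Dividing by something $\geq \cF/M_6$ gives $M_{13}$.

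\textbf{Item 1.} By concavity and symmetry of $f$, $f_n\geq f_i$ when $\lambda_n\leq\lambda_i$ (up to an $O(M_2)$ shift of eigenvalues). Hence $f_n\geq \cF/n$. The lower bound in \eqref{EQ:Hypothèse encadrement nabla_AG} gives $\Phi_{,nn}\geq f_n/M_6$, so $F_{,nn}\geq 1/(2nM_6)$.

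\textbf{Item 2.} $\sum_i F_{,ii}\lambda_i = \scal{\nabla_AF}{A_0}$. Replace $A_0$ by $B(x_0)+A_0$ at cost $M_2$. By concavity of $f$ along the ray, $\scal{\nabla f(\mu)}{\mu}\geq f(\mu)-f(t\mu)$-type bounds hold. I expect this step to use condition (3) and to need care: the natural tool is Proposition \ref{PRO:Guan}'s last inequality ("$-\scal{\nabla F(A)}{A-A_0}\geq -R\scal{\nabla F}{\Id}$"), applied to the hypersurface with $A_0$ replaced by a fixed point of $\Omega_H$. The $G$-contribution is controlled by $\norme{\nabla_AG}\norme{A_0}\leq M_5\cF$ from \eqref{EQ:Hypothèse nabla G}, which is exactly why the factor $1/\norme{A_0}$ appears.

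\textbf{Item 4, the main obstacle.} Write $-d^2\Phi = -d^2(f\circ\lambda)(B+A)$ evaluated along $(\hat x,\hat A)$ via $\hat A + \nabla_{\hat x}B$, plus $-\nabla^2 G$. Assumption \eqref{EQ:Hypothèse nabla2G} gives $-\nabla^2G\geq -M_8\cF\norme{A_0}\norme{\hat x}^2 + \frac{M_9}{2}d^2(f\circ\lambda)(\hat A,\hat A)$. The spectral term is at least $(1-\frac{M_9}{2})\big(-d^2(f\circ\lambda)(\hat A,\hat A)\big)$ minus cross terms with $\nabla B$. These cross terms are handled by Cauchy--Schwarz, absorbing into the concave term and into $\cF\norme{\hat x}^2\norme{A_0}$ via $M_3$.

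Using \eqref{EQ:Dérivée seconde f o lambda}, I keep only the row $i=1$ of $-d^2(f\circ\lambda)$: $\sum_{i>1}\frac{f_i-f_1}{\lambda_1-\lambda_i}\hat A_{1i}^2 \geq \frac{1}{\lambda_1-\lambda_n}\sum_{i}(f_i-f_1)\hat A_{1i}^2$. Translating $f_i$ into $F_{,ii}$ via the bounds $F_{,ii}\in[f_i/(2M_6\cF'),\ldots]$, and using $M_7+M_9\leq 1$ to keep a positive coefficient, gives the form $F_{,ij}-M_{15}\delta_{ij}F_{,11}$. Here $\nabla_AG$ is not diagonal, and the off-diagonal $F_{,ij}$ come from $G_{,ij}$. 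Finally, $\norme{\hat x}$ is normalised (or bounded, since the $\hat x$ of interest are unit) to get $-M_{14}\norme{A_0}$. Matching these constants precisely is where I expect most of the difficulty.
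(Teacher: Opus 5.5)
\textbf{Shared route, and the gap in item 2.} Your route is the paper's. Dividing $\nabla\Phi$ by $\scal{\nabla_A\Phi}{\Id}$ is exactly its normalisation by $L\scal{\NN_\bH}{(0,\Id)}$, and your items 3 and 4 follow its Steps 2 and 6. Bounding $\scal{\nabla_A\Phi}{\Id}$ between $\cF/M_6$ and $(1+M_7)\cF\leq 2\cF$ via the two-sided bound on $\nabla_AG$ is a correct, slightly cleaner substitute for the paper's use of $M_5$ with $\norme{A_0}\geq 2nM_5$. Item 3 is then fine, and your remark that item 4 only makes sense for bounded $\norme{\hat x}$ is right.

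Item 2, however, is not proved, and the tool you reach for is the wrong one. Proposition \ref{PRO:Guan} needs a point of $\Omega_H$, and its constants $\delta,R$ come from a $\cC$-sub-solution, whereas $M_{12}$ may only depend on $\underline{M}$ and $n$. The missing step is elementary. For $\mu=\lambda(B(x_0)+A_0)$, the function $a(t)=f(t\mu)$ is concave on $(0,+\infty)$ with $a(1)<\lim_{t\to+\infty}a(t)$, hence $a'(1)\geq 0$, i.e. $\scal{\nabla(f\circ\lambda)}{B(x_0)+A_0}\geq 0$. Then $\scal{\nabla_A\Phi}{A_0}\geq -\norme{\nabla(f\circ\lambda)}\norme{B(x_0)}-\norme{\nabla_AG}\norme{A_0}\geq -(M_2+M_5)\cF$. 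Dividing by the denominator gives $\sum_i F_{,ii}\lambda_i=\scal{\nabla_AF}{A_0}\geq -M_6(M_2+M_5)$. This item does not depend on the choice of basis.

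\textbf{The basis mismatch in items 1 and 4.} The serious issue is the one you flag yourself. $\nabla(f\circ\lambda)$ is diagonal in the eigenbasis of $B(x_0)+A_0$, whereas $F_{,nn}$ and the $\hat A_{1i}$ refer to the eigenbasis of $A_0$. A rotation of angle $O(M_2/(\lambda_i-\lambda_j))$ is useless exactly when the eigenvalue gaps of $A_0$ are $\lesssim M_2$. In fact no argument can work, as this example shows:
\begin{itemize}
\item On a flat $3$-torus take $B\equiv\mathrm{diag}(0,0,1)$ in a parallel frame, $G\equiv 0$ and $f(\mu)=\sum_i\log\mu_i$, i.e. $\bH=\{\det(B+A)=1,\ B+A>0\}$. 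This is almost spectral with a fixed tuple.
\item Here $\nabla_AF=(B+A_0)^{-1}/\tr((B+A_0)^{-1})$.
\item At $A_0=\mathrm{diag}(\Lambda,2\eta,\eta)$ with $2\Lambda\eta(1+\eta)=1$ one finds $F_{,33}\leq 2\eta\leq 1/\Lambda\to 0$, so item 1 fails.
\end{itemize}
The paper's Step 4 has the same defect: it silently takes $B(x_0)+A_0$ to be diagonal in the eigenbasis of $A_0$. Under an extra assumption such as $B(x_0)\in\R\Id$ the two bases agree, and your item 1 argument is then correct, giving $M_{11}=2nM_6$.

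\textbf{Cross terms in item 4.} Your Cauchy--Schwarz absorption of the cross terms in $w=dB(x_0)\hat x$ needs $-d^2(f\circ\lambda)(w,w)\lesssim \cF\norme{A_0}\norme{w}^2$. This is not a hypothesis, and it fails for $f=\sum_i\log\mu_i$ with $n\geq 3$ on a level set where one eigenvalue tends to $0$. You also dropped the term $d(f\circ\lambda)\,d^2B(x_0)(\hat x,\hat x)$, which the hypotheses do not control since $\nabla^2B$ is never bounded. The paper's Step 6 simply omits both contributions. So this part too only goes through under additional assumptions on $B$, for example $\nabla B(x_0)=0$ and $\nabla^2B(x_0)=0$.
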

\begin{proof}
By a density argument, we may assume without loss of generality that $B(x_0) + A_0$ has pairwise distinct eigenvalues $\lambda_1 < \ldots < \lambda_n$.

Let $M_{10} = \max\{M_1,2nM_5\}$. In particular, if $\norme{A_0} \geq M_{10}$, then $\norme{A_0} \geq M_1$ so the properties in Definition \ref{DEF:Presque spectrale} apply. Let $f$, $B$ and $G$ given by this definition. Recall that $\cF = \sum_{i = 1}^n \frac{\partial f}{\partial\lambda_i}(\lambda(B(x_0) + A_0))$ is positive by ellipticity of $f$. Since $\bH$ is locally given defined by the equation $f(\lambda(B(x) + A)) + G(x,A) = 0$, we have, for all $(\hat{x},\hat{A}) \in T_{(x_0,A_0)}\End_S(T_{x_0}X,g)$,
\begin{equation}\label{EQ:Vecteur normal en fonction de f et G}
    L\scal{\NN_\bH(x_0,A_0)}{(\hat{x},\hat{A})} = d(f \circ \lambda)(dB(x_0)\hat{x} + \hat{A}) + dG(x_0,A_0)(\hat{x},\hat{A}).
\end{equation}
If moreover, $(\hat{x},\hat{A}) \in T_{(x_0,A_0)}\bH$,
\begin{align}\label{EQ:II en fonction de f et G}
\begin{split}
    L\,\2_\bH(x_0,A_0)((\hat{x},\hat{A}),(\hat{x},\hat{A})) & = -d^2(f \circ \lambda)(x_0,B_0 + A_0)((dB(x_0)\hat{x} + \hat{A}),(dB(x_0)\hat{x} + \hat{A}))\\
    & - d(f \circ \lambda)(x_0,B_0 + A_0)d^2B(x_0)(\hat{x},\hat{x}) - d^2G(x_0,A_0)((\hat{x},\hat{A}),(\hat{x},\hat{A})).
\end{split}
\end{align}
where $L \in \R$ is a normalisation constant. Similarly, by Proposition \ref{PRO:Différentielles F_I}, for all $(\hat{x},\hat{A})$,
\begin{equation}\label{EQ:Vecteur normal en fonction de F}
    \frac{\scal{\NN_\bH(x_0,A_0)}{(\hat{x},\hat{A})}}{\scal{\NN_\bH(x_0,A_0)}{(0,\Id)}} = dF(x_0,A_0)(\hat{x},\hat{A}),
\end{equation}
and if $(\hat{x},\hat{A})$ is tangent to $\bH$,
\begin{align}\label{EQ:II en fonction de F}
    \frac{\2_\bH(x_0,A_0)((\hat{x},\hat{A}),(\hat{x},\hat{A}))}{\scal{\NN_\bH(x_0,A_0)}{(0,\Id)}} = -d^2F(x_0,A_0)((\hat{x},\hat{A}),(\hat{x},\hat{A})).
\end{align}

\noindent\textbf{Step 1:} Lower bound on $L$.

From now on, we always evaluate functions at $(x_0,A_0)$ so we omit to write it. Assume without loss of generality that $\norme{A_0} \geq M_{10} \geq 2nM_5$. In particular,
\begin{equation}\label{EQ:Conséquence A_0 grande}
    \frac{M_5}{\norme{A_0}} \leq \frac{1}{2n} \leq \frac{1}{2\sqrt{n}}.
\end{equation}

Applying \eqref{EQ:Vecteur normal en fonction de f et G} with $\hat{x} = 0$ and $\hat{A} = \Id$, we obtain that
\begin{align}
    L\scal{\NN_\bH}{(0,\Id)} & = \cF + d_AG\,\Id \geq \cF - \norme{\nabla_AG}\norme{\Id} \nonumber\\
    & \geq \left(1 - \frac{M_5\sqrt{n}}{\norme{A_0}}\right)\cF \textrm{ by \eqref{EQ:Hypothèse nabla G}} \nonumber\\
    & \geq \frac{1}{2}\cF \textrm{ by \eqref{EQ:Conséquence A_0 grande}}. \label{EQ:L fois PS > F/2}.
\end{align}
By positivity of $\NN_\bH$, $\scal{\NN_\bH}{(0,\Id)} \geq \norme{\NN_\bH} = 1$. We deduce that $L$ is positive and
\begin{equation}\label{EQ:Borne inférieure L}
    L \geq \frac{1}{2}\cF > 0.
\end{equation}

\noindent\textbf{Step 2:} Proof of point 3.

We have
\begin{align}
    L\norme{\pi_{T_{x_0}X}\NN_\bH}^2 & = L\scal{\NN_\bH}{(\pi_{T_{x_0}X}\NN_\bH,0)} \nonumber\\
    & = d(f \circ \lambda)dB\,(\pi_{T_{x_0}X}\NN_\bH) + d_xG\,(\pi_{T_{x_0}X}\NN_\bH) \textrm{ by \eqref{EQ:Vecteur normal en fonction de f et G}}, \nonumber\\
    & \leq \norme{\nabla(f \circ \lambda)}\norme{\nabla B}\norme{\pi_{T_{x_0}X}\NN_\bH} + M_4\cF\norme{\pi_{T_{x_0}X}\NN_\bH} \textrm{ by \eqref{EQ:Hypothèse nabla G}}, \nonumber\\
    & \leq (M_3 + M_4)\cF\norme{\pi_{T_{x_0}X}\NN_\bH} \textrm{ by ellipticity of $f$ and \eqref{EQ:Hypothèse bornes B}}, \nonumber\\
    L\norme{\pi_{T_{x_0}X}\NN_\bH} & \leq (M_3 + M_4)\cF. \label{EQ:L norme x}
\end{align}

Therefore, by \eqref{EQ:Vecteur normal en fonction de F}, \eqref{EQ:L fois PS > F/2} and \eqref{EQ:L norme x},
$$
\norme{\nabla_xF} = \frac{\norme{\pi_{T_{x_0}X}\NN_\bH}}{\scal{\NN_\bH}{(0,\Id)}} \leq 2(M_3 + M_4).
$$

\noindent\textbf{Step 3:} Upper bound on $L$.

On the one hand, we have
\begin{align*}
    L\scal{\NN_\bH}{(0,\Id)} & = \cF + d_AG\,\Id \textrm{ by \eqref{EQ:Vecteur normal en fonction de f et G}},\\
    & \leq \cF + \norme{\nabla_AG}\norme{\Id}\\
    & \leq \left(1 + \frac{M_5\sqrt{n}}{\norme{A_0}}\right)\cF \textrm{ by \eqref{EQ:Hypothèse nabla G}},\\
    & \leq \frac{3}{2}\cF \textrm{ by \eqref{EQ:Conséquence A_0 grande}},
\end{align*}
and on the other hand,
\begin{align}
    \scal{\NN_\bH}{(0,\Id)} & \geq \norme{\pi_{\End_S(T_{x_0}X,g)}\NN_\bH} \textrm{ by ellipticity}, \nonumber\\
    & \geq 1 - \norme{\pi_{T_{x_0}X}\NN_\bH} \nonumber\\
    & = 1 - \norme{\nabla_xF}\scal{\NN_\bH}{(0,\Id)} \textrm{ by \eqref{EQ:Vecteur normal en fonction de F}}, \nonumber\\
    & \geq 1 - 2(M_3 + M_4)\scal{\NN_\bH}{(0,\Id)}. \label{EQ:Borne inférieure NN contre Id}
\end{align}
Hence,
\begin{equation}\label{EQ:Borne supérieure L}
    L \leq \frac{3\cF}{2\scal{\NN_\bH}{(0,\Id)}} \leq \frac{3(2(M_3 + M_4) + 1)}{2}\cF.
\end{equation}

\noindent\textbf{Step 4:} Proof of point 1.

Recall that we work in an orthonormal basis $\frac{\partial}{\partial x_1},\ldots,\frac{\partial}{\partial x_n}$ where $B(x_0) + A_0$ is diagonal with pairwise distinct eigenvalues. By \eqref{EQ:Dérivée première f o lambda}, in this basis,
$$
\nabla(f \circ \lambda) =
\begin{pmatrix}
    f_1 & 0 & \hdots & 0\\
    0 & f_2 & \hdots & 0\\
    \vdots & \vdots & \ddots & \vdots\\
    0 & 0 & \hdots & f_n
\end{pmatrix}.
$$
Moreover, it has decreasing eigenvalues,
$$
f_1 \leq \cdots \leq f_n.
$$

In particular,
\begin{equation}\label{EQ:Borne inférieure Fnn}
    \hat{x}_n\nabla(f \circ \lambda)\hat{x}_n = f_n \geq \frac{1}{n}\cF.
\end{equation}
We deduce that
\begin{align*}
    F_{,nn} & = \hat{x}_n^*\frac{\pi_{\End_S(T_{x_0}X,g)}\NN_\bH}{\scal{\NN_\bH}{(0,\Id)}}\hat{x}_n \textrm{ by \eqref{EQ:Vecteur normal en fonction de F}},\\
    & \geq \hat{x}_n^*\frac{\pi_{\End_S(T_{x_0}X,g)}\NN_\bH}{\norme{\pi_{\End_S(T_{x_0}X,g)}\NN_\bH}\norme{\Id}}\hat{x}_n\\
    & \geq \frac{1}{\sqrt{n}}\hat{x}_n^*(\pi_{\End_S(T_{x_0}X,g)}\NN_\bH)\hat{x}_n\\
    & = \frac{1}{\sqrt{n}L}(\hat{x}_n^*\nabla(f \circ \lambda)\hat{x}_n + \hat{x}_n^*\nabla_AG\hat{x}_n) \textrm{ by \eqref{EQ:Vecteur normal en fonction de f et G}},\\
    & \geq \frac{1}{\sqrt{n}L}\left(\frac{1}{n}\cF - \frac{M_5}{\norme{A}_0}\cF\right) \textrm{ by \eqref{EQ:Borne inférieure Fnn} and \eqref{EQ:Hypothèse nabla G}},\\
    & \geq \frac{1}{2n^{3/2}L}\cF \textrm{ by \eqref{EQ:Conséquence A_0 grande}},\\
    & \geq \frac{1}{3n^{3/2}(2(M_3 + M_4) + 1)} \textrm{ by \eqref{EQ:Borne supérieure L}}.
\end{align*}

\textbf{Step 5:} Proof of point 2.

Let the real function
$$
a : \fonction{\R_+^*}{\R_+}{t}{f(t\lambda(B(x_0) + A_0))}.
$$
By assumption on $f$, $\lim_{t \rightarrow 0} a(t) < a(1) < \lim_{t \rightarrow +\infty}$ and $a$ is concave. In particular, $a$ must be non-decreasing near $1$ so $a'(1) \geq 0$ \textit{i.e.}
\begin{equation}\label{EQ:Non Guan}
    \scal{\nabla(f \circ \lambda)}{B(x_0) + A_0} \geq 0.
\end{equation}

Therefore,
\begin{align*}
    \sum_{i = 1}^n F_{,ii}\lambda_i & = \scal{\nabla_AF}{A_0}\\
    & = \frac{\scal{\NN_\bH}{(0,A_0)}}{\scal{\NN_\bH}{(0,\Id)}} \textrm{ by \eqref{EQ:Vecteur normal en fonction de F}},\\
    & = \frac{1}{L\scal{\NN_\bH}{(0,\Id)}}(\scal{\nabla(f \circ \lambda)}{A_0} + \scal{\nabla_AG}{A_0}) \textrm{ by \eqref{EQ:Vecteur normal en fonction de f et G}},\\
    & \geq \frac{1}{L\scal{\NN_\bH}{(0,\Id)}}(-\scal{\nabla(f \circ \lambda)}{B(x_0)} - \norme{\nabla_AG}\norme{A_0}) \textrm{ by \eqref{EQ:Non Guan}},\\
    & \geq \frac{1}{L\scal{\NN_\bH}{(0,\Id)}}\left(-\norme{B(x_0)} - M_5\right)\cF \textrm{ by \eqref{EQ:Hypothèse nabla G}},\\
    & \geq -2(M_2 + M_5) \textrm{ by \eqref{EQ:Hypothèse bornes B} and \eqref{EQ:Borne inférieure L}}.
\end{align*}

\noindent\textbf{Step 6:} Proof of point 4.

Let $(\hat{x},\hat{A}) \in T_{(x_0,A_0)}\bH$. By \eqref{EQ:Dérivée seconde f o lambda},
\begin{align}
    -d^2(f \circ \lambda)(\hat{A},\hat{A}) & = -\sum_{i,j = 1}^n f_{ij}\hat{A}_{ii}\hat{A}_{jj} - \sum_{\underset{i \neq j}{i,j = 1}}^n \frac{f_i - f_j}{\lambda_i - \lambda_j}\hat{A}_{ij}^2 \nonumber\\
    & \geq -\sum_{\underset{i \neq j}{i,j = 1}}^n \frac{f_i - f_j}{\lambda_i - \lambda_j}\hat{A}_{ij}^2 \textrm{ by concavity of $f$}. \nonumber\\
    & \geq 2\sum_{i = 2}^n \frac{f_i - f_1}{\lambda_1 - \lambda_i}\hat{A}_{1i}^2 \nonumber\\
    & \geq \frac{2}{\lambda_1 - \lambda_n}\sum_{i = 1}^n (f_i - f_1)\hat{A}_{1i}^2. \label{EQ:Inégalité concavité 1}
\end{align}

Let for all $i$, $v_i = A_{1i}$. We have
\begin{align}
    \sum_{i,j = 1}^n F_{,ij}\hat{A}_{1i}\hat{Aj} & = v^*\nabla_AF\,v \nonumber\\
    & = v^*\frac{\pi_{\End_S(T_{x_0}X,g)}\NN_\bH}{\scal{\NN_\bH}{(0,\Id)}}v \textrm{ by \eqref{EQ:Vecteur normal en fonction de F}}, \nonumber\\
    & = v^*\frac{\nabla(f \circ \lambda) + \nabla_AG}{L\scal{\NN_\bH}{(0,\Id)}}v \textrm{ by \eqref{EQ:Vecteur normal en fonction de f et G}}, \nonumber\\
    & \leq (1 + M_7)v^*\frac{\nabla(f \circ \lambda)}{L\scal{\NN_\bH}{(0,\Id)}}v \textrm{ by \eqref{EQ:Hypothèse encadrement nabla_AG}}, \nonumber\\
    & = \frac{1 + M_7}{L\scal{\NN_\bH}{(0,\Id)}}\sum_{i = 1}^n f_i\hat{A}_{1i}^2, \label{EQ:Inégalité concavité 2}
\end{align}
and with the same computation,
\begin{align}
    \sum_{i,j = 1}^n F_{,11}\hat{A}_{1i}\hat{Aj} & = \hat{x}_1^*\nabla_A F\,\hat{x}_1\norme{v}^2 \nonumber\\
    & \geq \frac{1}{M_6}\frac{1}{L\scal{\NN_\bH}{(0,\Id)}}\sum_{i = 1}^n f_1\hat{A}_{1i}^2 \textrm{ by \eqref{EQ:Hypothèse encadrement nabla_AG}}. \label{EQ:Inégalité concavité 3}
\end{align}

Finally,
\begin{adjustwidth}{-5cm}{5cm}
\begin{align*}
    -d^2F((\hat{x},\hat{A}),(\hat{x},\hat{A})) & = \frac{1}{L\scal{\NN_\bH}{(0,\Id)}}(-d^2(f \circ \lambda)(\hat{A},\hat{A}) - d^2G((\hat{x},\hat{A}),(\hat{x},\hat{A}))) \textrm{ by \eqref{EQ:II en fonction de f et G} and \eqref{EQ:II en fonction de F}},\\
    & \geq \frac{1}{L\scal{\NN_\bH}{(0,\Id)}}\left(-d^2(f \circ \lambda)(\hat{A},\hat{A}) - M_8\cF\norme{A_0}\norme{\hat{x}}^2 + \frac{M_9}{2}d^2(f \circ \lambda)((\hat{x},\hat{A}),(\hat{x},\hat{A}))\right) \textrm{ by \eqref{EQ:Hypothèse nabla2G}},\\
    & = -\frac{M_8\cF}{L\scal{\NN_\bH}{(0,\Id)}}\norme{A_0}\norme{x}^2 - \frac{1 - \frac{M_9}{2}}{L\scal{\NN_\bH}{(0,\Id)}}d^2(f \circ \lambda)(\hat{A},\hat{A})\\
    & \geq -2(2(M_3 + M_4) + 1)M_8\norme{A_0} + \frac{2 - M_9}{L\scal{\NN_\bH}{(0,\Id)}}\frac{1}{\lambda_1 - \lambda_n}\sum_{i = 1}^n (f_i - f_1)\hat{A}_{1i}^2\\
    & \textrm{ by \eqref{EQ:Borne inférieure L}, \eqref{EQ:Borne inférieure NN contre Id} and \eqref{EQ:Inégalité concavité 1}}\\
    & \geq -2(2(M_3 + M_4) + 1)M_8\norme{A_0} + \frac{2 - M_9}{1 + M_7}\frac{1}{\lambda_1 - \lambda_n}\sum_{i,j = 1}^n F_{,ij}\hat{A}_{1i}\hat{A}_{1j}\\
    & - M_6\left(2 - M_9\right)\frac{1}{\lambda_1 - \lambda_n}\sum_{i = 1}^n F_{,11}\hat{A}_{1i}^2 \textrm{ by \eqref{EQ:Inégalité concavité 2} and \eqref{EQ:Inégalité concavité 3}},\\
    & \geq -2(2(M_3 + M_4) + 1)M_8\norme{A_0} + \frac{1}{\lambda_1 - \lambda_n}\sum_{i,j = 1}^n F_{,ij}\hat{A}_{1i}\hat{A}_{1j}\\
    & - M_6\left(2 - M_9\right)\frac{1}{\lambda_1 - \lambda_n}\sum_{i = 1}^n F_{,11}\hat{A}_{1i}^2 \textrm{ by point 7}.
\end{align*}
\end{adjustwidth}
\end{proof}

\section{\textit{A priori estimates}}\label{SEC:Estimées}

\subsection{\texorpdfstring{$L^1$}{L1} and \texorpdfstring{$\cC^0$}{C0} estimates}\label{SEC:Estimées C0}

Let $\bH \subset \End_S(TX,g)$ a family of smooth $\End_S^{++}(TX,g)$-elliptic convex hypersurfaces of $\End_S(TX,g)$ on $X$. We do not require $\bH$ to be almost spectral in this sub-section.

\begin{definition}\label{DEF:Sous-solution}
    We say that a smooth function $\underline{u} : X \rightarrow \R$ is a \textit{$\cC$-sub-solution} for $\bH$ if for all $x \in X$, $\nabla^2\underline{u}(x) \in \Omega_{H_x}$.
\end{definition}

We assume that $0$ is a $\cC$-\textit{sub-solution}, meaning that for all $x$, $0 \in \Omega_{H_x}$.

In general, if there is any sub-solution $\underline{u} : X \rightarrow \R$, we can translate $\bH$ by $-\nabla^2\underline{u}$ to make $0$ a $\cC$-sub-solution. By Proposition \ref{COR:Propriétés Omega_H}, each $\Omega_{H_x}$ is open and since $X$ is compact, there are positive numbers $\delta$ and $R$ such that
\begin{equation}\label{EQ:Définition delta et R}
    \forall (x,A) \in \End_S^{++}(X,g), -\delta\Id + A \in H_x \quad \Longrightarrow \quad \norme{A} \leq R.
\end{equation}

We start by showing $L^1$ and $\cC^0$ estimates for \eqref{EQ:Equation}.

Usually, solutions of equations of this type (the Monge--Ampère equation for example) have their Laplacian trivially bounded from below. It is not the case here in general so we introduce another linear elliptic operator that will replace the Laplacian.

Let $\bH_0 \subset \End_S(TX,g)$ be a family of smooth $\End_S^{++}(TX,g)$-elliptic convex hypersurfaces of $\End_S(TX,g)$ such that
\begin{enumerate}
    \item Each $H_{0,x} \subset \End_S(T_xX,g)$ is an affine hyperplane.
    \item For all $x$, $H_{0,x} \subset H_x^-$.
\end{enumerate}
Such a family always exists. We can take for example
$$
\bH_0 = \{(x,A) \in \End_S(TX,g) \mid \scal{\NN_{H_x}(-F_\Id(x,0)\Id)}{A} = \scal{\NN_{H_x}(-F_\Id(x,0)\Id)}{-F_\Id(x,0)\Id}\},
$$
which is indeed included in $H_x^-$ by concavity of $F_\Id(x,\cdot)$.

\begin{figure}[H]
\begin{center}
\begin{tikzpicture}
  \begin{axis}[
    axis lines=middle,
    xmin=-2.2, xmax=4.2,
    ymin=-1.2, ymax=3.25,
    width=0.5\linewidth,
    height=0.375\linewidth,
    samples=200,
    domain=-2:4,
    xtick={-2,-1,0,1,2,3,4},
    ytick={-1,0,1,2,3,4},
    xticklabels={},
    yticklabels={},
    tick style={black},
    axis line style={black},
    clip=false
  ]

    \addplot[
      black,
      very thick,
      domain=-2:4
    ]
    {0.5*(-x + sqrt((x-2)^2 + 1))}
    node[midway,above right] {$H_x$};

    %
    \pgfmathsetmacro{\a}{(sqrt(11)-1)/4}
    \pgfmathsetmacro{\m}{-(7+2*sqrt(11))/15}

    \addplot[
      gray,
      very thick,
      domain=-2:2.317
    ]
    {
      \a + \m*(x-\a)
    }
    node[near start,below left] {$H_{0,x}$};

    \addplot[
      gray,
      very thick,
      domain=-2:1.6
    ]
    {
      -x/2 - 0.2
    }
    node[midway,below left] {$H_{0,x}$};

  \end{axis}
\end{tikzpicture}
\end{center}
\caption{Two examples of possible affine hyperplanes $H_{0,x} \subset H_x^-$.}\label{FIG:H_0}
\end{figure}

We set $\NN_0(x) \in \End_S^{++}(T_xX,g)$ the normal vector of $H_{0,x}$ at any point, which depends smoothly on $x$, and $a : X \rightarrow \R$ the smooth function such that
$$
H_{0,x} = \{A \in \End_S(T_xX,g) \mid \scal{\NN_0(x)}{A} = a(x)\}.
$$

The $L^1$ estimate doesn't require the existence of a $\cC$-sub-solution but it depends on $\bH_0$.

\begin{proposition}[$L^1$ estimates]\label{PRO:Estimées L1}
    There is a constant $C = C(X,g,\bH_0)$ such that for all smooth solutions $u : X \rightarrow \R$ of \eqref{EQ:Equation} with $\max_X u = 0$,
    $$
    \norme{u}_{L^1} \leq C.
    $$
\end{proposition}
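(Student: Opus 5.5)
The estimate will come from replacing the Laplacian by the linear operator
$$
\Delta_0 u(x) = \scal{\NN_0(x)}{\nabla^2u(x)},
$$
which is uniformly elliptic because $\NN_0(x) \in \End_S^{++}(T_xX,g)$ depends smoothly on $x$ and $X$ is compact. We then follow the Green function argument of Tosatti and Weinkove.

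\textbf{Step 1: a lower bound for $\Delta_0 u$.} Let $u$ solve \eqref{EQ:Equation}, so that $\nabla^2u(x) \in H_x \subset H_x^+$. Since $H_{0,x} \subset H_x^-$, every point of $H_x$ lies weakly on the upper side of the hyperplane $H_{0,x}$, that is,
$$
\scal{\NN_0(x)}{\nabla^2u(x)} \geq a(x).
$$
To justify this, take $A \in H_x$ and write $A = B + tI$ with $B = A - F_{\NN_0(x)}$-type projection; more simply, let $t$ be such that $A - t\NN_0(x) \in H_{0,x}$. If $t < 0$, then $A - t\NN_0(x) \in A + \Gamma$, so $F_\Id(x,A - t\NN_0(x)) > F_\Id(x,A) = 0$ by $\Gamma$-ellipticity. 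This contradicts $H_{0,x} \subset H_x^-$. Hence $t \geq 0$, which gives $\scal{\NN_0}{A} \geq a$. We conclude that $\Delta_0 u \geq a \geq -C_0$ with $C_0 = \max_X \abs{a}$.

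\textbf{Step 2: Green's representation.} Let $\Delta_0^*$ be the formal adjoint of $\Delta_0$ with respect to $\Vol_g$. It is a second order uniformly elliptic operator with smooth coefficients. By the Fredholm alternative and the strong maximum principle, its kernel is spanned by a smooth positive function $\rho$, which we normalise by $\int_X \rho\,\Vol_g = 1$. Let $G(x,y)$ be a Green function for $\Delta_0$, so that
$$
u(x) = \int_X u\,\rho\,\Vol_g - \int_X G(x,y)\,\Delta_0 u(y)\,\Vol_g(y).
$$
The function $G$ depends only on $(X,g,\NN_0)$, that is, on $\bH_0$. After adding a constant we may take $G \geq 0$ with $\int_X G(x,\cdot) \leq C_1$ uniformly in $x$; since $G$ has an integrable singularity of order $d^{2-n}$, this is possible. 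Evaluate the formula at a point $x_0$ where $u(x_0) = \max_X u = 0$ and use $\Delta_0 u \geq -C_0$ together with $G \geq 0$:
$$
0 \leq \int_X u\,\rho\,\Vol_g + C_0C_1.
$$
Since $u \leq 0$ and $\rho \geq \min_X \rho > 0$, this gives
$$
\norme{u}_{L^1} \leq \frac{1}{\min_X \rho}\int_X (-u)\rho\,\Vol_g \leq \frac{C_0C_1}{\min_X\rho}.
$$

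\textbf{Main obstacle.} The delicate step is the construction of the Green function for a non-divergence, non-self-adjoint operator. The invariant measure is $\rho\,\Vol_g$ rather than $\Vol_g$, and $G$ must be made non-negative with uniformly bounded integral. I would first try to avoid this by an alternative: a weak Harnack inequality (Krylov--Safonov or Gilbarg--Trudinger Theorem 9.22) applied to the supersolution $-u \geq 0$ of $\Delta_0(-u) \leq C_0$ on finitely many coordinate balls covering $X$. Chaining the balls from $x_0$, where $-u = 0$, gives $\norme{u}_{L^1(B)} \leq C$ on each ball, hence the $L^1$ bound on all of $X$. Every constant in this argument depends only on $X$, $g$ and $\bH_0$.
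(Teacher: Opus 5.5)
Your proof is essentially the paper's. You use the same operator $\square u=\scal{\NN_0}{\nabla^2u}$ (your $\Delta_0$) and the same lower bound $\square u\geq a$ coming from $H_x\subset H_{0,x}^+$; you actually justify that bound, whereas the paper only asserts it. You also use the same Green-function argument, in which a positive generator of $\ker(\square^*)$ (your $\rho$, the paper's $k$) absorbs the error term.

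One step fails as written. You cannot make $G\geq 0$ by adding a constant. The identity $\int_X\Delta_0u\,\Vol_g=\int_X u\,\Delta_0^*1\,\Vol_g$ has a right-hand side that is in general non-zero, so shifting $G(x,\cdot)$ by a constant destroys the representation formula. The admissible normalisation is $G(x,y)\mapsto G(x,y)+c\,\rho(y)$, which is harmless because $\int_X\rho\,\Delta_0u\,\Vol_g=\int_X u\,\Delta_0^*\rho\,\Vol_g=0$. Since $G$ is bounded below and $\min_X\rho>0$, a large $c$ gives $G\geq 0$. This is precisely the paper's step $-G\leq\frac{\max_{X^2}(-G)}{\min_X k}\,k$ combined with $\int_X k\,\square u\,\Vol_g=0$. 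With this correction your argument is complete.

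Your fallback also needs adjusting. Gilbarg--Trudinger Theorem 9.22 (Krylov--Safonov) only bounds $\norme{u}_{L^p}$ for some small $p>0$ depending on $n$ and the ellipticity constants, not $p=1$. Chaining balls would then give an $L^p$ bound, not the $L^1$ bound you want. Because $\NN_0$ is smooth, you can instead write $\Delta_0$ in divergence form with first-order terms. You can then apply the De Giorgi--Nash--Moser weak Harnack inequality (Gilbarg--Trudinger Theorem 8.18, valid for $1\leq p<\frac{n}{n-2}$) to the non-negative supersolution $-u$. After that, your chaining over finitely many balls does yield the $L^1$ estimate.
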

\begin{proof}
The proof is the same as the one of Tosatti and Weinkove \cite[Lemma 3.3]{Tosatti_Weinkove} except that we do not have a lower bound on the Laplacian of $u$ so we must introduce another non-positive linear elliptic operator that depends on $\bH_0$,
$$
\square u(x) = \scal{\nabla^2u(x)}{\NN_0(x)}.
$$
Since $H_{0,x} \subset H_x^-$, we also have $H_x \subset H_{0,x}^+$ hence for all $A \in H_x$, $\scal{\NN_0(x)}{A} \geq a(x)$.

In particular, for $A = \nabla^2u(x)$, we obtain that
\begin{equation}\label{EQ:Borne inférieure square phi}
    \square u \geq a(x) \geq -m,
\end{equation}
where $m = -\min_{x \in X} a(x)$. Notice that inequality \eqref{EQ:Borne inférieure square phi} proves that $m$ must be non-negative.

From now on, we only use \eqref{EQ:Borne inférieure square phi} to obtain a bound on $\norme{u}_{L^1}$. In particular, we don't require any more that $u$ solves \eqref{EQ:Equation}. Let $G : X^2 \rightarrow \R \cup \{+\infty\}$ be the Green function associated to $-\square$. Recall that for all $y$,
\begin{equation}\label{EQ:Moyenne G = 0}
    \int_X G(x,y) \, d\mu_g(x) = 0,
\end{equation}
due to the fact that $\ker(\square) = \R 1$.

Unlike the Laplacian operator, $\square$ needs not be self-adjoint so we introduce a generator $k$ of $\ker(\square^*)$. In particular, this function satisfies
\begin{equation}\label{EQ:square*k = 0}
    \int_X k(x)\square u(x) \, d\mu_g(x) = \int_X \square^*k(x)u(x) \, d\mu_g(x) = 0.
\end{equation}
$\square$ depends only on $X$ and $\bH_0$, thus $G$ and $k$ depend only on $X$, $g$ and $\bH_0$.

Then, assume without loss of generality that $k$ takes at least one positive value, say at a point $x_0$. We show that $k$ is positive. First of all, if $k$ takes a negative value, say at a point $x_1 \in X$, we can easily build a positive function $f$ such that
\begin{equation}\label{EQ:f orthogonale à k}
    \int_X f(x)k(x) \, d\mu_g(x) = 0.
\end{equation}
For this, start from $f = 1$ and add a bump near $x_0$ if the integral of $k$ is negative, or a bump near $x_1$ else. With a well-adjusted bump, we obtain \eqref{EQ:f orthogonale à k}. This integral means that $f \in \ker(\square^*)^\bot = \im(\square)$ so there is a function $g$ such that $\square g = f > 0$. It contradicts the existence of a maximum for $g$.

It shows that $k \geq 0$ (and we still have $k(x_0) > 0$). If $k$ vanishes on $X$, there is a positive radius $r$ such that $k$ is positive on the geodesic ball $B$ of centre $x_0$ and radius $r$ but vanishes at a point of $\partial B$. Since $k|_B$ is a solution to a uniform elliptic equation, by the Harnack inequality, there is a constant $C_1$ that only depends on $B$, $X$ and $\square^*$ such that
$$
\sup_B k \leq C_1\inf_B k.
$$
However, $\inf_B k = 0$, which contradicts the fact that $k(x_0) > 0$. Thus, $k > 0$ on $X$.

Let $x_1 \in X$ be a point where $u$ reaches its maximum $0$. We have
\begin{align*}
    0 & = u(x_1)\\
    & = \frac{1}{\Vol_g(X)}\int_X u(x) d\mu_g(x) - \int_X G(x,x_1)\square u(x) \, d\mu_g(x)\\
    & \leq -\frac{1}{\Vol_g(X)}\norme{u}_{L^1} - \int_X G(x,x_1)\left(m + \square u(x)\right) \, d\mu_g(x) \textrm{ by \eqref{EQ:Moyenne G = 0}},\\
    & \leq -\frac{1}{\Vol_g(X)}\norme{u}_{L^1} + \int_X \frac{\max_{X^2} (-G)}{\min_X k}k(x)\left(m + \square u(x)\right) \, d\mu_g(x) \textrm{ by \eqref{EQ:Borne inférieure square phi}},\\
    & = -\frac{1}{\Vol_g(X)}\norme{u}_{L^1} + m\frac{\max_{X^2} (-G)}{\min k}\norme{k}_{L^1} \textrm{ by \eqref{EQ:square*k = 0}}.
\end{align*}
We deduce that
\begin{equation}\label{EQ:Borne norme L1}
    \norme{u}_{L^1} \leq C_2,
\end{equation}
with $C_2$ that depends only on $X$, $g$ and $\bH_0$.
\end{proof}

We can now bound the $\cC^0$ norm of $u$ by using the fact that $0$ is a $\cC$-sub-solution and the constants $\delta$ and $R$ introduced in \eqref{EQ:Définition delta et R}.

\begin{proposition}[$\cC^0$ estimates]\label{PRO:Estimées C0}
    There is a constant $C = C(X,g,\delta,R)$ such that for all smooth solutions $u$ of \eqref{EQ:Equation} with $\max_X u = 0$,
    $$
    \norme{u}_{\cC^0} \leq C\left(\norme{u}_{L^1} + 1\right).
    $$
\end{proposition}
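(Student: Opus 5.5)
The plan is to follow Székelyhidi's version of the ABP argument for $\cC$-sub-solutions, via Blocki's local maximum principle, with the constants $\delta$ and $R$ of \eqref{EQ:Définition delta et R} replacing the usual cone condition.

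\textbf{Setup.} Let $L = -\min_X u \geq 0$ be attained at $x_0$. Work in a normal coordinate chart $B(x_0,r_0)$, where $r_0$ depends only on $(X,g)$. Set
$$
v(x) = u(x) + \varepsilon\,|x|^2 ,
$$
with $\varepsilon>0$ small, depending on $\delta$ and $(X,g)$. Then $v(0) = -L$ and $v \geq -L + \varepsilon r_0^2/4$ near $\partial B(x_0,r_0/2)$. Blocki's ABP estimate applied to $v$ gives
$$
c_0\,\varepsilon^n \leq \int_S \det(D^2 v),
$$
where $S$ is the set of points $x \in B(x_0,r_0/2)$ with
\begin{itemize}
\item[(i)] $|Dv(x)| \leq \varepsilon r_0/8$,
\item[(ii)] $v(y) \geq v(x) + Dv(x)\cdot(y-x)$ for all $y$ in the ball, hence $D^2v(x) \geq 0$ and $v(x)\le v(0)+\text{small}$.
\end{itemize}

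\textbf{Key pointwise step.} At $x\in S$, compare the Euclidean and covariant Hessians. Since $|Du|$ is small on $S$ by (i), and the Christoffel symbols vanish at $x_0$ and are $O(r_0)$ nearby, we get
$$
\nabla^2 u(x) \geq -2\varepsilon\,\Id + O(r_0\varepsilon) \geq -\delta\,\Id
$$
once $\varepsilon$ and $r_0$ are chosen small relative to $\delta$. Write $\nabla^2u(x) = -\delta\Id + A$ with $A \geq 0$. Since $\nabla^2u(x) \in H_x$, the hypothesis \eqref{EQ:Définition delta et R} yields $\norme{A} \leq R$; this uses the definition of $R$ and its extension from $A$ positive definite to $A$ semi-definite by closedness. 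Hence $\norme{D^2 v(x)} \leq C(\delta,R,g)$ on $S$, so $\det D^2v \leq C$ there, and
$$
c_0\,\varepsilon^n \leq C\,|S| .
$$

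\textbf{Conclusion.} On $S$, property (ii) evaluated at $y=0$ together with (i) gives $v(x) \leq -L + \varepsilon r_0^2/4$. Hence $u \leq -L + C\varepsilon r_0^2 \leq -L/2$ on $S$ as soon as $L \geq L_0(\delta,R,g)$. Therefore
$$
\norme{u}_{L^1} \geq \frac{L}{2}\,|S| \geq c\,L,
$$
which gives $L \leq C(\norme{u}_{L^1}+1)$. Since $\max_X u = 0$, this is $\norme{u}_{\cC^0} = L$ and the estimate follows.

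\textbf{Main obstacle.} The delicate step is the pointwise Hessian comparison. It requires the smallness of the gradient on $S$ and the chart error terms, and it uses the fact that $H_x$ is reached only from the ``positive'' side. One must also check that \eqref{EQ:Définition delta et R} really bounds all of $A$, including its large-eigenvalue directions; this follows because $H_x \cap (-\delta\Id + \overline{\End_S^{+}})$ is bounded, which is exactly the statement that $-\delta\Id \in \Omega_{H_x}$.
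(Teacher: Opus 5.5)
Your proposal is correct and is essentially the paper's argument. Both apply Blocki's form of the ABP estimate (via Sz\'ekelyhidi's Proposition 11) to $u + \varepsilon|x|^2$ in normal coordinates centred at a minimum point, and both use the sub-solution constants $\delta$ and $R$ to bound $\nabla^2 u$, and hence $\det D^2 v$, uniformly on the contact set. The only difference is cosmetic: you bound the measure of the contact set from below and use $u \leq -L/2$ there, whereas the paper integrates $-\psi + \frac{\epsilon r}{2} \geq \norme{u}_{\cC^0}$ over the contact set directly.
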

\begin{proof}
The proof relies on the Alexandroff--Bakelman--Pucci maximum principle as used by Blocki \cite{Blocki} in the case of the complex Monge--Ampère equation and by Székeleyhidi \cite{Szekelyhidi} for more general complex Hessian equations.

Then, let $x_0 \in X$ be a point where $u$ reaches its minimum $\min_X u = -\norme{u}_{\cC^0}$. When $r > 0$, we denote by $B_r$ the Euclidean ball of centre $0$ and radius $r$ in $T_{x_0}X$. Let $r > 0$ small enough so
$$
\exp_{g,x_0} : B_r \longrightarrow X
$$
is a diffeomorphism on its image. Up to reducing $r$, we may assume that there are positive constants $C_1$, $C_2$ and $C_3$ such that for all $x \in B_r$ and $x \in \exp_{g,x_0}(B_r)$,
\begin{equation}\label{EQ:Borne dexp}
    \norme{d\exp_{g,x_0}(x)} \leq C_1,
\end{equation}
\begin{equation}\label{EQ:Borne dexp-1}
    \norme{d\exp_{g,x_0}^{-1}(x)} \leq C_2,
\end{equation}
\begin{equation}\label{EQ:Borne d^2exp-1}
    \norme{d^2\exp_{g,x_0}^{-1}(x)} \leq C_3.
\end{equation}
We used the operator norms. By compactness of $X$, we can choose the triple $(r,C_1,C_2,C_3)$ uniformly with respect to $x_0$. In other words, they only depend on $X$ and $g$.

Let the real function of $B_r$ defined by
$$
\psi : x \longmapsto u(\exp_{g,x_0}^{-1}(x)) + \epsilon\norme{x}^2.
$$
where
\begin{equation}\label{EQ:Définition epsilon}
    \epsilon = \frac{\delta}{2C_2^2 + \frac{C_3}{2} + 2rC_3}.
\end{equation}

We introduce
$$
P = \left\{a \in B_r \mid \norme{d\psi(a)} < \frac{\epsilon}{2} \textrm{ and } \forall b \in B_r, \psi(b) \geq \psi(a) + d\psi(a)(b - a)\right\},
$$
By \cite[Proposition 11]{Szekelyhidi}, since $u$ reaches its minimum at $x_0$, there is a constant $c_4$ that only depends on $n$, $r$, and $\epsilon$ such that
\begin{equation}\label{EQ:Borne inférieure intégrale sur P}
    \int_P \det(d^2\psi(a)) \, da \geq c_4
\end{equation}

Let $(x,\hat{x}) \in T\exp_{g,x_0}(B_r)$, $a = \exp_{g,x_0}^{-1}(x) \in B_r$ and $\hat{a} = d\exp_{g,x_0}^{-1}(x)\hat{x}$. We can compute that
\begin{equation}\label{EQ:Hessienne phi}
    d^2u(x)(\hat{x},\hat{x}) = d^2\psi(a)(\hat{a},\hat{a}) + d\psi(a)d^2\exp_{g,x_0}^{-1}(x)(\hat{x},\hat{x}) - 2\epsilon\norme{\hat{a}}^2 - 2\epsilon\scal{a}{d^2\exp_{g,x_0}^{-1}(x)\hat{x}}.
\end{equation}
If $a \in P$, then
\begin{equation}\label{EQ:Positivité d^2psi et borne dpsi}
    d^2\psi(a)(\hat{a},\hat{a}) \geq 0, \qquad \norme{d\psi(a)} < \frac{\epsilon}{2},
\end{equation}
We deduce from \eqref{EQ:Hessienne phi} and \eqref{EQ:Positivité d^2psi et borne dpsi} that
\begin{align*}
    d^2u(x)(\hat{x},\hat{x}) & > -\frac{\epsilon}{2}\norme{d^2\exp_{g,x_0}^{-1}(x)(\hat{x},\hat{x})} - 2\epsilon\norme{d\exp_{g,x_0}^{-1}(x)\hat{x}}^2 - 2\epsilon\norme{a}\norme{d^2\exp_{g,p_0}^{-1}(x)(\hat{x},\hat{x})}\\
    & \geq -\frac{\epsilon}{2}C_3\norme{\hat{x}}^2 - 2\epsilon C_2^2\norme{\hat{x}}^2 - 2\epsilon rC_3\norme{\hat{x}}^2 \textrm{ by \eqref{EQ:Borne dexp-1} and \eqref{EQ:Borne d^2exp-1}},\\
    & = -\delta\norme{\hat{x}}^2 \textrm{ by \eqref{EQ:Définition epsilon}}.
\end{align*}

We deduce that $\nabla^2u(x) > -\delta\Id$ when $x \in \exp_{g,x_0}(P)$. Since $u$ is a solution of \eqref{EQ:Equation}, by \eqref{EQ:Définition delta et R},
\begin{equation}\label{EQ:Borne Hessienne dans P}
    \norme{\nabla^2u(x)} \leq R,
\end{equation}
still when $x \in \exp_{g,x_0}(P)$.

Therefore,
\begin{align}
    d^2\psi(a)(\hat{a},\hat{a}) & = d^2u(x)(\hat{x},\hat{x}) - d\psi(a)d^2\exp_{g,x_0}^{-1}(x)(\hat{x},\hat{x}) + 2\epsilon\norme{\hat{a}}^2 + 2\epsilon\scal{a}{d^2\exp_{g,x_0}^{-1}(x)\hat{x}} \textrm{ by \eqref{EQ:Hessienne phi}}, \nonumber\\
    & \leq \norme{\nabla^2u(x)}\norme{\hat{x}}^2 + \frac{\epsilon}{2}C_3\norme{\hat{x}}^2 + 2\epsilon\norme{\hat{a}}^2 + 2\epsilon rC_3\norme{\hat{x}}^2 \textrm{ by \eqref{EQ:Borne dexp-1}, \eqref{EQ:Borne d^2exp-1} and \eqref{EQ:Positivité d^2psi et borne dpsi}}, \nonumber\\
    & \leq C_1^2R\hat{a}^2 + \frac{\epsilon}{2}C_1^2C_3\norme{\hat{a}}^2 + 2\epsilon\norme{\hat{a}}^2 + 2\epsilon rC_1^2C_3\norme{\hat{a}}^2 \textrm{ by \eqref{EQ:Borne dexp} and \eqref{EQ:Borne Hessienne dans P}}, \nonumber\\
    & \leq C_5\hat{a}^2, \label{EQ:Borne d^2psi dans P}
\end{align}
with $C_5 = C_1^2R + \frac{\epsilon}{2}C_1^2C_3 + 2\epsilon + 2\epsilon rC_1^2C_3$ that only depends on $X$, $g$, $\delta$ and $R$.

Finally, notice that for all $a \in P$,
\begin{equation}\label{EQ:psi(a) bornée par psi(0)}
    -\norme{u}_{\cC^0} = \psi(0) \geq \psi(a) - \frac{\epsilon r}{2},
\end{equation}
hence
\begin{align*}
    \norme{u}_{\cC^0} & \leq \frac{\norme{u}_{\cC^0}}{c_4}\int_P \det(d^2\psi(a)) \, da \textrm{ by \eqref{EQ:Borne inférieure intégrale sur P}},\\
    & \leq \frac{\norme{u}_{\cC^0}}{c_4}\int_P C_5^n \, da \textrm{ by \eqref{EQ:Borne d^2psi dans P}},\\
    & \leq \frac{C_5^n\norme{u}_{\cC^0}}{c_4}\int_P \frac{-\psi(a) + \frac{\epsilon r}{2}}{\norme{u}_{\cC^0}} \, dx \textrm{ by \eqref{EQ:psi(a) bornée par psi(0)}},\\
    & \leq \frac{C_5^nC_2^n}{c_4}\int_{\exp_{g,x_0}(P)} -u(x) - \epsilon\norme{\exp^{-1}(x)}^2 + \frac{\epsilon r}{2} \, d\mu_g(x) \textrm{ by \eqref{EQ:Borne dexp-1}},\\
    & \leq \frac{C_5^nC_2^n}{c_4}\left(\norme{u}_{L^1} + \frac{\epsilon r}{2}\Vol_g(X)\right).
\end{align*}
\end{proof}

\subsection{\texorpdfstring{$\cC^1$}{C1} estimates}\label{SEC:Estimées C1}

In this sub-section, we prove an \textit{a priori} estimate for $\norme{\nabla u}_{\cC^0}$ in function of $\norme{u}_{\cC^0}$. From now on, we assume that $\bH$ is almost spectral (or at least the first three points of Lemma \ref{LEM:Conséquences hypothèse}). We do not need to assume the existence of any $\cC$-sub-solution in this sub-section.

From now on, we write $F = F_\Id$. When $u$ solves \eqref{EQ:Equation}, we have for all $x$,
$$
F(x,\nabla^2u(x)) = 0.
$$
When we differentiate this equality, we obtain for all $\hat{x} \in T_xX$,
\begin{equation}\label{EQ:Dérivée F = 0}
    d_xF(x,\nabla^2u(x))\hat{x} + d_AF(x,\nabla^2u(x))d(\nabla^2u)(x)\hat{x} = 0.
\end{equation}

In local coordinates, \eqref{EQ:Dérivée F = 0} gives for all $x$ and $k$,
\begin{equation}\label{EQ:Dérivée F = 0 localement}
    F_{k,}(x,\nabla^2u(x)) + \sum_{i,j = 1}^n F_{,ij}(x,\nabla^2u(x))u_{ijk}(x) = 0.
\end{equation}

We also recall the bound we obtain from the rules of commutativity for the covariant derivatives at each point.
\begin{align}\label{EQ:Commutativité dérivées}
\begin{split}
    u_{ij} - u_{ij} & = 0,\\
    u_{ijk} - u_{jik} & = 0,\\
    \abs{u_{ijk} - u_{ikj}} & \leq B_1\norme{\nabla u},\\
    \abs{u_{kkij} - u_{ijkk}} & \leq B_2\norme{\nabla^2 u} + B_3\norme{\nabla u},
\end{split}
\end{align}
where $B_1$ only depends on the $\cC^0$ norm of the Riemannian tensor and $B_2$ on its $\cC^1$ norm.

Throughout the proofs of the different estimates, we will use the following consequences of Proposition \ref{PRO:Différentielles F_I},
\begin{equation}\label{EQ:nabla F borné}
    \sum_{i = 1}^n F_{,ii} = 1, \qquad 0 < \sum_{i,j = 1}^n F_{,ij} \leq n.
\end{equation}

Let $\underline{M} = (M_{10},M_{11},M_{12},M_{13},M_{14},M_{15})$ given by Lemma \ref{LEM:Conséquences hypothèse}.

\begin{proposition}[$\cC^1$ estimates]\label{PRO:Estimées C1}
    There is a constant $C = C(X,g,M_{10},M_{11},M_{12},M_{13})$ such that
    $$
    \norme{\nabla u}_{\cC^0} \leq C\left(\norme{u}_{\cC^0}^2 + 1\right).
    $$
\end{proposition}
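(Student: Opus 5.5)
We plan a maximum-principle argument on an auxiliary function. With the normalisation $\max_X u = 0$, set $K = \norme{u}_{\cC^0}+1$ and
$$
Q = \log\abs{\nabla u}^2 + \varphi(u), \qquad \varphi(s) = -\alpha\log\Big(1 + \frac{s}{2K}\Big)\ \text{ or }\ \varphi(s) = \frac{(s+2K)^2}{K^2}\cdot\frac{1}{\beta},
$$
a function whose first derivative has a definite sign and whose second derivative is a positive definite quantity of size $K^{-2}$. We then look at a point $x_0$ where $Q$ attains its maximum. The aim is to show that $\abs{\nabla u}^2(x_0)\leq CK^4$, which gives the bound $\norme{\nabla u}\leq C(K^2+1)$ everywhere, as claimed. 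We may assume $\abs{\nabla u}(x_0)$ is large. We choose orthonormal coordinates at $x_0$ that diagonalise $\nabla^2u(x_0)=A_0$ with $\lambda_1\geq\dots\geq\lambda_n$. If $\norme{A_0}\leq M_{10}$, then $F_{,ij}$ is controlled by compactness. Otherwise Lemma \ref{LEM:Conséquences hypothèse} gives $F_{,nn}\geq 1/M_{11}$, $\sum F_{,ii}\lambda_i\geq -M_{12}$ and $\abs{F_{k,}}\leq M_{13}$. Moreover $F_{,ij}$ is positive definite and diagonal in this frame at $x_0$ by ellipticity, with $\sum F_{,ii}=1$ by \eqref{EQ:nabla F borné}.

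At $x_0$ we use $dQ=0$, i.e. $2\sum_k u_ku_{ki} = -\varphi'\,u_i\abs{\nabla u}^2$, together with $\sum_{i}F_{,ii}Q_{ii}\leq 0$. Expanding the second inequality gives
$$
0\geq \frac{\sum_i F_{,ii}\big(2\sum_k u_{kii}u_k + 2\sum_k u_{ki}^2\big)}{\abs{\nabla u}^2} - \frac{\sum_iF_{,ii}(\abs{\nabla u}^2)_i^2}{\abs{\nabla u}^4} + \varphi'\sum_i F_{,ii}\lambda_i + \varphi''\sum_i F_{,ii}u_i^2 .
$$
We treat the terms as follows. First, we commute $u_{kii}$ to $u_{iik}$ at cost $B_1\abs{\nabla u}$ using \eqref{EQ:Commutativité dérivées}. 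Then we apply \eqref{EQ:Dérivée F = 0 localement}, so that $\sum_i F_{,ii}u_{iik}=-F_{k,}$, which is bounded by $M_{13}$. After dividing by $\abs{\nabla u}^2$, these third-order terms contribute only $O(1+\abs{\nabla u}^{-1})$. The $\varphi'$ term is bounded below by $-\abs{\varphi'}M_{12}$ by point 2. Since $\abs{\varphi'}\sim 1/K$, this is harmless.

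The main obstacle is the squared gradient term. Using $dQ=0$, it equals $(\varphi')^2\sum_iF_{,ii}u_i^2$. We must absorb it with $\varphi''\sum F_{,ii}u_i^2$, together with the good term $2\sum F_{,ii}u_{ii}^2/\abs{\nabla u}^2$. For this we choose $\varphi$ so that $\varphi''-(\varphi')^2\geq c/K^2>0$ on $[-K,0]$; for instance $\varphi(s)=-\log(1-s/(2K))$ up to scaling does not work, so we take $\varphi(s) = \frac{1}{\beta}\big(\frac{s}{K}+2\big)^2$-type and check the inequality. This leaves
$$
\frac{c}{K^2}\sum_iF_{,ii}u_i^2 \leq C.
$$
To turn this into a bound on $\abs{\nabla u}^2$, we need a lower bound on some $F_{,ii}$ in a direction where $u_i$ is large. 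We split into cases. If $u_n^2\geq \abs{\nabla u}^2/n$, point 1 gives the conclusion directly. Otherwise $dQ=0$ forces $\lambda_i u_i\approx-\varphi' u_i\abs{\nabla u}^2/2$ for all $i$, so some eigenvalue is of order $-\abs{\nabla u}^2/K$. In that case $\lambda_n$ is very negative, and the good term $\sum_i F_{,ii}\lambda_i^2/\abs{\nabla u}^2\geq F_{,nn}\lambda_n^2/\abs{\nabla u}^2$ grows like $\abs{\nabla u}^2/K^2$, which again yields $\abs{\nabla u}^2\leq CK^4$. Verifying this case analysis, and choosing the constants in $\varphi$ consistently, is the delicate step. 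The constants depend only on $X$, $g$ and $M_{10},\dots,M_{13}$.
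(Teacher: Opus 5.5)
Your final choice, an \emph{increasing} weight $\varphi$, is the paper's mechanism: it makes the critical-point equation turn $\nabla u$ into an eigenvector of $\nabla^2u$ with a large negative eigenvalue, and then point 1 of Lemma \ref{LEM:Conséquences hypothèse} is used through $F_{,nn}\lambda_n^2$. The paper uses $\norme{\nabla u}+\frac12(u+A)^2$ where you use $\log\abs{\nabla u}^2+\varphi(u)$. As written, however, your argument rests on a false claim: $(F_{,ij})$ need \emph{not} be diagonal in an eigenframe of $\nabla^2u(x_0)$. Ellipticity only gives positivity. Diagonality holds when $F$ depends only on $\lambda(A)$, and losing it is exactly the non-spectral difficulty. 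Hence $\sum_iF_{,ii}u_{iik}\neq-F_{k,}$ in general. Differentiating the equation controls only the full contraction $\sum_{i,j}F_{,ij}u_{ijk}$, so the off-diagonal third derivatives hidden in your diagonal contraction $\sum_iF_{,ii}Q_{ii}$ are uncontrolled. You must contract the whole Hessian, $\sum_{i,j}F_{,ij}Q_{ij}\leq 0$, as the paper does. This still works: $\sum_ku_{ki}u_{kj}=\delta_{ij}\lambda_i^2$ and $u_{ij}=\delta_{ij}\lambda_i$ are diagonal by themselves, and the two gradient terms combine into $(\varphi''-(\varphi')^2)\scal{\nabla_AF\,\nabla u}{\nabla u}\geq 0$. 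But with the full contraction your first case no longer works. The conditions $F_{,nn}\geq 1/M_{11}$ and $u_n^2\geq\abs{\nabla u}^2/n$ give no lower bound on $\scal{\nabla_AF\,\nabla u}{\nabla u}$; for instance $\frac12\begin{pmatrix}1&-1\\-1&1\end{pmatrix}$ annihilates $(1,1)$.

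No case split is needed. The real constraint, which you leave vague (``a definite sign''), is that $\varphi'$ must be \emph{positive}. Your first candidate $-\alpha\log(1+s/2K)$ is decreasing. It makes $\nabla u$ an eigenvector with a large \emph{positive} eigenvalue, where nothing in the lemma bounds $\nabla_AF$ from below; the paper remarks on exactly this. With $\varphi'>0$, the condition $dQ=0$ gives exactly, not approximately, $\nabla^2u\,\nabla u=-\frac{\varphi'}{2}\abs{\nabla u}^2\nabla u$, so $\lambda_n\leq-\frac{\varphi'}{2}\abs{\nabla u}^2$. This already forces $\norme{A_0}\geq M_{10}$ once $\abs{\nabla u}^2\geq 2M_{10}/\varphi'$. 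Your ``compactness'' branch is therefore unnecessary, and it would introduce constants of $\bH$ beyond $M_{10},\dots,M_{13}$. Then $2F_{,nn}\lambda_n^2/\abs{\nabla u}^2\geq(\varphi')^2\abs{\nabla u}^2/(2M_{11})$ dominates the $O(1)$ third-order term and the term $-\varphi'M_{12}$. With your quadratic $\varphi$ and $\beta>8$, this yields $\abs{\nabla u}^2(x_0)\leq CK^2$. Contrary to your remark, $-\alpha\log(1-s/2K)$ with $0<\alpha\leq 1$ also works. Since $\varphi$ has bounded oscillation, the logarithm transfers the bound multiplicatively. Once repaired, your route therefore gives the linear bound $\norme{\nabla u}_{\cC^0}\leq C(\norme{u}_{\cC^0}+1)$. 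This is stronger than the paper's quadratic bound, which comes from the additive term $\frac12(u+A)^2$ in its auxiliary function.
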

\begin{proof}
The proof is based on the introduction of an auxiliary function and the fact that this function must have a maximum by compactness of $X$. We prove it in five steps.\\

\noindent\textbf{Step 1:} Introducing the function $f$ and its maximum.

Let
$$
f : x \longmapsto \norme{\nabla u(x)} + \frac{1}{2}(u(x) + A)^2,
$$
where
\begin{equation}\label{EQ:Estimée C1 définition A}
    A = \norme{u}_{\cC^0} + \sqrt{M_{11}(nB_1 + 1)}
\end{equation}

Usually, when such an auxiliary function is introduced to show a $\cC^1$-estimate, the function in $u$ (here, $\frac{1}{2}(u(x) + A)^2$) is a decreasing function. See for example \cite{Blocki}\cite{Zhang}. Here, we make it increasing so the gradient of $u$ will be an eigenvector of its Hessian associated with a negative eigenvalue with large absolute value. We then use the first point of Lemma \ref{LEM:Conséquences hypothèse} which, roughly speaking, says that we have uniform ellipticity of the equation in the direction of the smallest eigenvalue of the Hessian.

Notice that $A \geq \norme{u}_{\cC^0} + 1$ so for all $x$, $1 \leq u(x) + A \leq A$.

Let $x_0$ be a point where $f$ reaches its maximum. We have
\begin{align}
    \norme{\nabla u}_{\cC^0} & = \max_{x \in X} f(x) - \frac{1}{2}(u(x) + A)^2 \nonumber\\
    & \leq f(x_0) \nonumber\\
    & \leq \norme{\nabla u(x_0)} + \frac{1}{2}\left(\norme{u}_{\cC^0} + \sqrt{M_{11}(nB_1 + 1)}\right)^2. \label{EQ:Norme C1 en fonction de la valeur en x_0}
\end{align}
We deduce that it is enough to bound $\frac{\norme{\nabla u(x_0)}}{\norme{u}_{\cC^0}^2 + 1}$.\\

\noindent\textbf{Step 2:} Computing the first differential of $f$ and showing that the gradient is an eigenvector of the Hessian matrix.

If $\nabla u(x_0) = 0$, we are finished. Else, $f$ is smooth in an open neighbourhood of $x_0$ and locally around this point
$$
f(x) = \left(\sum_{k = 1}^n u_k(x)^2\right)^{1/2} + \frac{1}{2}(u(x) + A)^2,
$$
thus for all $i$,
\begin{align}
    f_i(x) & = \left(\sum_{k = 1}^n u_k(x)^2\right)^{-1/2}\sum_{k = 1}^n u_{ki}(x)u_k(x) + (u(x) + A)u_i(x) \label{EQ:f_i estimées C1}\\
    & = \frac{1}{\norme{\nabla u(x_0)}}\sum_{k = 1}^n u_{ki}(x_0)u_k(x_0) + (u(x) + A)u_i(x_0) \textrm{ at } x = x_0, \nonumber\\
    & = 0 \textrm{ because $f$ is maximal at } x_0. \label{EQ:f_i = 0 estimées C1}
\end{align}

Let $\lambda_1 \geq \cdots \geq \lambda_n$ be the eigenvalues of $\nabla^2u(x_0)$. Equality \eqref{EQ:f_i = 0 estimées C1} exactly tells that $\nabla u(x_0)$ is an eigenvector of $\nabla^2u(x_0)$. We may choose the local coordinates so there is an index $r$ such that
$$
u_i(x_0) = \delta_{ir}\norme{\nabla u(x_0)}, \qquad u_{ij}(x_0) = \delta_{ij}\lambda_i.
$$
Moreover, \eqref{EQ:f_i = 0 estimées C1} at $i = r$ yields
\begin{equation}\label{EQ:lambda_r}
    \lambda_r = -(u(x) + A)\norme{\nabla u(x_0)} < 0.
\end{equation}

If $\norme{\nabla u(x_0)} \leq M_{10}$, we are finished. Else, $\lambda_r \leq -M_{10}$ so $\norme{\nabla^2u(x_0)} \geq M_{10}$ and Lemma \ref{LEM:Conséquences hypothèse} applies at $(x_0,\nabla^2u(x_0))$.

\noindent\textbf{Step 3:} Computing the second differential of $f$.

From now on, when we evaluate an expression at $x = x_0$, we omit to write it to lighten the expressions. From \eqref{EQ:f_i estimées C1}, we compute that

\begin{align*}
    f_{ij} & = \left(\sum_{k = 1}^n u_k(x)^2\right)^{-1/2}\sum_{k = 1}^n u_{kij}(x)u_k(x) + \left(\sum_{k = 1}^n u_k(x)^2\right)^{-1/2}\sum_{k = 1}^n u_{ki}(x)u_{kj}(x)\\
    & -\left(\sum_{k = 1}^n u_k(x)^2\right)^{-3/2}\left(\sum_{k = 1}^n u_{ki}(x)u_k(x)\right)\left(\sum_{k = 1}^n u_{kj}(x)u_k(x)\right)\\
    & + (u(x) + A)u_{ij}(x) + u_i(x)u_j(x)\\
    & = u_{rij} + \frac{\delta_{ij}}{u_r}\lambda_i^2 - \frac{\delta_{ir}\delta_{jr}}{u_r}\lambda_r^2 + \delta_{ij}(u + A)\lambda_i + \delta_{ir}\delta_{jr}u_r^2 \textrm{ at } x = x_0,\\
    & = u_{rij} + \frac{\delta_{ij}}{u_r}\lambda_i^2 - \delta_{ir}\delta_{jr}(u + A)^2u_r\\
    & + \delta_{ij}(u + A)\lambda_i + \delta_{ir}\delta_{jr}u_r^2 \textrm{ by \eqref{EQ:lambda_r}}.
\end{align*}

Since $f$ reaches a maximum at $x_0$, its Hessian matrix at this point is negative. By ellipticity, the matrix $(F_{,ij})$ (at $(x_0,\nabla^2u(x_0))$) is positive definite. In particular, we deduce that
\begin{align}
    0 & \geq \sum_{i,j = 1}^n F_{,ij}f_{ij} \nonumber\\
    & = \sum_{i,j = 1}^n F_{,ij}u_{rij} + (u + A)\sum_{i = 1}^n F_{ii}\lambda_i \label{EQ:Estimée C1 terme 1}\\
    & + \frac{1}{u_r}\sum_{i = 1}^n F_{ii}\lambda_i^2 \label{EQ:Estimée C1 terme 2}\\
    & - F_{rr}(u + A)^2u_r + F_{rr}u_r^2. \label{EQ:Estimée C1 terme 3}
\end{align}

Finally, we bound from below each line of the above equation in order to bound $u_r = \norme{\nabla u}$.\\

\noindent\textbf{Step 4:} Bounding \eqref{EQ:Estimée C1 terme 1}, \eqref{EQ:Estimée C1 terme 2} and \eqref{EQ:Estimée C1 terme 3}.

For the first term, we have
\begin{align}
    & \quad\ \sum_{i,j = 1}^n F_{,ij}u_{rij} + (u + A)\sum_{i = 1}^n F_{ii}\lambda_i\\
    & \geq \sum_{i,j = 1}^n F_{,ij}u_{rij} - M_{12}(u + A) \textrm{ by point 2 of Lemma \ref{LEM:Conséquences hypothèse}}, \nonumber\\
    & \geq \sum_{i,j = 1}^n F_{,ij}u_{ijr} - nB_1u_r - M_{12}(u + A) \textrm{ by \eqref{EQ:Commutativité dérivées} and \eqref{EQ:nabla F borné}}, \nonumber\\
    & = -F_{r,} - nB_1u_r - M_{12}(u + A) \textrm{ by \eqref{EQ:Dérivée F = 0 localement}}, \nonumber\\
    & \geq -M_{13} - nB_1u_r - M_{12}(u + A) \textrm{ by point 3 of Lemma \ref{LEM:Conséquences hypothèse}}, \nonumber\\
    & \geq -nB_1u_r - M_{12}A - M_{13}. \label{EQ:Borne estimée C1 terme 1}
\end{align}

The second term is very positive, especially when $u_r$ becomes large so it will give us the final bound on $u_r$.
\begin{align}
    \frac{1}{u_r}\sum_{i = 1}^n F_{ii}\lambda_i^2 & \geq \frac{1}{u_r}F_{nn}\lambda_n^2 \nonumber\\
    & \geq \frac{1}{M_{11}u_r}\lambda_r^2 \textrm{ by point 1 of Lemma \ref{LEM:Conséquences hypothèse} and the fact that } \lambda_n \leq \lambda_r \leq 0, \nonumber\\
    & = \frac{1}{M_{11}}(u + A)^2u_r \textrm{ by \eqref{EQ:lambda_r}}, \nonumber\\
    & \geq \frac{1}{M_{11}}(-\norme{u}_{\cC^0} + A)^2u_r. \label{EQ:Borne estimée C1 terme 2}
\end{align}

The third term can easily be bounded as
\begin{equation}\label{EQ:Borne estimée C1 terme 3}
    -F_{rr}(u + A)^2u_r + F_{rr}u_r^2 \geq (u_r - A^2)F_{rr}u_r
\end{equation}

\noindent\textbf{Step 5:} Conclusion.

Finally, by \eqref{EQ:Borne estimée C1 terme 1}, \eqref{EQ:Borne estimée C1 terme 2} and \eqref{EQ:Borne estimée C1 terme 3},
\begin{align*}
    0 & \geq \sum_{i,j = 1}^n F_{,ij}f_{ij}\\
    & \geq -nB_1u_r - M_{12}A - M_{13} + \frac{1}{M_{11}}(-\norme{u}_{\cC^0} + A)^2u_r + (u_r - A^2)F_{rr}u_r\\
    & = u_r - M_{12}\left(\norme{u}_{\cC^0} + \sqrt{M_{11}(nB_1 + 1)}\right) - M_{13}\\
    & + \left(u_r - \left(\norme{u}_{\cC^0} + \sqrt{M_{11}(nB_1 + 1)}\right)^2\right)F_{rr}u_r \textrm{ by \eqref{EQ:Estimée C1 définition A}}.
\end{align*}
We easily deduce that if $\frac{u_r}{\norme{u}_{\cC^0}^2 + 1}$ is large enough, both lines in the above equation will be positive, leading to a contradiction. We conclude by using the fact that $\norme{\nabla u} = u_r$ and \eqref{EQ:Norme C1 en fonction de la valeur en x_0}.
\end{proof}

\subsection{\texorpdfstring{$\cC^2$}{C2} estimates}\label{SEC:Estimées C2}

In this sub-section, we prove an \textit{a priori} estimate for $\norme{\nabla^2u}_{\cC^0}$ in function of $\norme{\nabla u}_{\cC^0}$ and $\norme{u}_{\cC^0}$. We assume that $\bH$ is almost spectral (or at least the results of Lemma \ref{LEM:Conséquences hypothèse}) to hold. We also assume that $0$ is a $\cC$-sub-solution so \eqref{EQ:Définition delta et R} holds for some positive constants $\delta$ and $R$.

When we differentiate twice the equality $F(x,\nabla^2u(x)) = 0$ in the direction $\hat{x} \in T_xX$, we obtain that
$$
d^2F(x,\nabla^2u(x))((\hat{x},d(\nabla^2u)(x)\hat{x}),(\hat{x},d(\nabla^2u)(x)\hat{x})) + d_AF(x,\nabla^2u(x))(d^2(\nabla^2u)(x)(\hat{x},\hat{x})) = 0.
$$
Thus, by point 4 of Lemma \ref{LEM:Conséquences hypothèse},
\begin{adjustwidth}{-5cm}{5cm}
\begin{align}
    \sum_{i,j = 1}^n F_{,ij}(x_0,\nabla^2u(x_0))u_{ij11}(x_0) & = d_AF(x,\nabla^2u(x))(d^2(\nabla^2u)(x)(\hat{x},\hat{x})) \nonumber\\
    & = -d^2F(x,\nabla^2u(x))((\hat{x},d(\nabla^2u)(x)\hat{x}),(\hat{x},d(\nabla^2u)(x)\hat{x})) \nonumber\\
    & \geq -M_{14}\norme{\nabla^2u(x_0)} + \frac{1}{\lambda_1 - \lambda_n}\sum_{i,j = 1}^n (F_{,ij}(x_0,\nabla^2u(x_0)) - M_{15}F_{,11})u_{i11}(x_0)u_{j11}(x_0). \label{EQ:Borne inférieure d_AF localement}
\end{align}
\end{adjustwidth}

We are now ready to prove Theorem \ref{THEI:Estimées C2}. Let $\underline{M}'$ given by Lemma \ref{LEM:Conséquences hypothèse}.

\begin{theorem}[$\cC^2$ estimates]\label{THE:Estimées C2}
    There is a constant $C = C(X,g,\bH_0,\delta,R,\underline{M}')$ such that
    $$
    \norme{\nabla^2u}_{\cC^0} \leq \exp(C(\norme{u}_{\cC^0} + 1))\left(\norme{\nabla u}_{\cC^0}^2 + 1\right).
    $$
\end{theorem}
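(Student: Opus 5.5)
We follow the standard Hou--Ma--Wu / Székelyhidi scheme: an auxiliary function built from the largest eigenvalue of $\nabla^2u$, evaluated at its maximum point. Let $\lambda_1(x)$ be the largest eigenvalue of $\nabla^2u(x)$ and set
$$
Q(x) = \lambda_1(x) + \varphi(\norme{\nabla u(x)}^2) + \psi(u(x)),
$$
with
$$
\varphi(s) = -\frac{1}{2}\log\left(1 - \frac{s}{2K}\right),\quad K = \norme{\nabla u}_{\cC^0}^2 + 1,
$$
and
$$
\psi(t) = -D_1 t + \frac{D_2}{2}(t - \inf u)^2\ \text{(or the exponential form } e^{-D(u - \inf u)}\text{)},
$$
where $D_1, D_2$ are large constants depending only on the allowed data. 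The exponential dependence on $\norme{u}_{\cC^0}$ in the statement will come from $\psi$. Since $\nabla^2 u \geq -C$ follows from an upper bound on $\lambda_1$ (the trace identity $\sum F_{,ii}=1$ combined with $\square u \geq -m$ from the proof of Proposition \ref{PRO:Estimées L1} bounds the negative part), it suffices to bound $\lambda_1$ from above. Let $x_0$ be a maximum point of $Q$. We may assume $\norme{\nabla^2u(x_0)} \geq M_{10}$, so Lemma \ref{LEM:Conséquences hypothèse} applies. We may also assume $\lambda_1 > \lambda_n$, or else the bound is trivial. If $\lambda_1$ is not simple, we perturb the Hessian by a small diagonal matrix in normal coordinates to make it smooth, as in Székelyhidi.

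The first step is to compute at $x_0$, in orthonormal coordinates diagonalising $\nabla^2u(x_0)$. We have $\lambda_{1,i} = u_{11i}$ and
$$
\lambda_{1,ii} \geq u_{11ii} + 2\sum_{p>1}\frac{u_{1pi}^2}{\lambda_1 - \lambda_p}.
$$
Commuting derivatives via \eqref{EQ:Commutativité dérivées} gives $u_{11ii} \geq u_{ii11} - C\lambda_1$. Summing against $F_{,ij}$ and using \eqref{EQ:Borne inférieure d_AF localement}, the operator $\mathcal{L} = \sum F_{,ij}\nabla_i\nabla_j$ satisfies
$$
\mathcal{L}\lambda_1 \geq -C\lambda_1 + \frac{1}{\lambda_1-\lambda_n}\sum_{i,j}(F_{,ij} - M_{15}\delta_{ij}F_{,11})u_{i11}u_{j11} + 2\sum_{p>1}\frac{F_{,ii}u_{1pi}^2}{\lambda_1-\lambda_p}.
$$
Next, $\mathcal{L}(\norme{\nabla u}^2) \geq \sum F_{,ii}\lambda_i^2 - C\norme{\nabla u}^2 - C\norme{\nabla u}$, using \eqref{EQ:Dérivée F = 0 localement} and point 3 of the lemma for the third-order terms. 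Finally, $\mathcal{L}u = \sum F_{,ii}\lambda_i$. The critical point condition gives
$$
u_{11i} = -\varphi'\,\partial_i\norme{\nabla u}^2 - \psi' u_i,
$$
which expresses the bad terms $F_{,11}u_{111}^2/\lambda_1^2$ through gradient quantities.

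The key step is handling $0 \geq \mathcal{L}Q$ by Guan's inequality (Proposition \ref{PRO:Guan}) with $A_0 = 0$, which is a $\cC$-sub-solution. This splits into two cases.
\begin{enumerate}
\item Either $-\mathcal{L}u = -\sum F_{,ii}\lambda_i \geq \kappa$. Then the $\psi'$ term, with $-\psi' \geq D_1$, gives a positive constant $\kappa D_1$. This dominates $-C\lambda_1$ after dividing by $\lambda_1$, once we note that we actually work with $\log\lambda_1$ so that the constant terms become $-C$.
\item Or $F_{,ii} \geq \kappa$ for all $i$ (uniform ellipticity). Then the term $\varphi'\sum F_{,ii}\lambda_i^2 \geq \kappa\lambda_1^2/(4K)$ beats everything, and yields $\lambda_1 \leq CK$.
\end{enumerate}
The third-derivative terms are treated as in Hou--Ma--Wu. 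We split the indices into those with $F_{,ii} \geq \lambda_1^{-1}\cdot$const$\cdot F_{,11}$ and the rest. For $i$ with $\lambda_i$ very negative, the concavity term from point 4 of the lemma absorbs $F_{,ii}u_{11i}^2/\lambda_1^2$. For the remaining indices, the critical point equation together with $\varphi'' = 2\varphi'^2$ absorbs them.

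The main obstacle is the $-M_{15}F_{,11}\sum_i u_{1i1}^2/(\lambda_1 - \lambda_n)$ term in point 4. It is absent in the spectral case, and it must be controlled using $F_{,11}$ being the smallest diagonal coefficient (up to the constant $M_6$) together with the first-order condition. I would try choosing $D_2$ large relative to $M_{15}$ so that $\psi''F_{,11}u_1^2$ dominates $M_{15}F_{,11}u_{111}^2/\lambda_1^2$ after substituting $u_{111}$. If $\lambda_1 - \lambda_n \geq \lambda_1$ (which holds since $\lambda_n \leq 0$ by $\sum F_{,ii}\lambda_i \geq -M_{12}$ arguments), this term is bounded by $M_{15}F_{,11}(\varphi' + |\psi'|)^2 K^2/\lambda_1$, which is lower order. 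Assembling the two cases gives $\lambda_1(x_0) \leq e^{C(\norme{u}_{\cC^0}+1)}K$, and hence the stated bound.
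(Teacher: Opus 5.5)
\textbf{There is a genuine gap: the third-order terms.} Your overall scheme is the paper's: an auxiliary function built on $\log\lambda_1$, reduction to an upper bound on $\lambda_1$ via $\square u \geq -m$, and Guan's inequality with $A_0 = 0$. But your treatment of the third-order terms transplants the spectral Hou--Ma--Wu argument to a setting where its ingredients are missing, and it does not go through.

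At $x_0$ the matrix $(F_{,ij})$ is not diagonal. The only concavity available is point 4 of Lemma~\ref{LEM:Conséquences hypothèse}, a single quadratic form $\frac{1}{\lambda_1-\lambda_n}\sum_{i,j}(F_{,ij}-M_{15}\delta_{ij}F_{,11})u_{i11}u_{j11}$ whose weight does not depend on the index. So no index splitting is possible, and the weight is weakest exactly when $\lambda_n$ is very negative; your claim is backwards.

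Divide by $\lambda_1$ and compare with the term $-\lambda_1^{-2}\sum_{i,j}F_{,ij}u_{11i}u_{11j}$ coming from $\log\lambda_1$. What remains is $-\theta\lambda_1^{-2}\sum_{i,j}F_{,ij}u_{11i}u_{11j}$ with $\theta = \abs{\lambda_n}/(\lambda_1-\lambda_n)$. After inserting the critical-point equation, the $\varphi'$-part is absorbed by $\varphi''$. The $\psi'$-part, of size $\theta(\psi')^2\sum_{i,j}F_{,ij}u_iu_j$, needs $\psi'' \gtrsim \theta(\psi')^2$. That is incompatible with $\abs{\psi'} \geq D_1$ on the whole range of $u$ unless $\theta \lesssim 1/(D_1\norme{u}_{\cC^0})$. (With your $\psi$ one in fact has $-\psi' \leq D_1$; you need $D_2\,\mathrm{osc}\,u \leq D_1/2$, hence $\psi'' \lesssim D_1/\norme{u}_{\cC^0}$.)

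The missing idea is the paper's extra case distinction:
\begin{itemize}
\item If $\abs{\lambda_n} \geq \varepsilon\lambda_1$ or $F_{,11} \geq \varepsilon^2/M_{11}$, point 1 of the lemma ($F_{,nn} \geq 1/M_{11}$) gives $\sum_i F_{,ii}\lambda_i^2 \geq \varepsilon^2\lambda_1^2/M_{11}$. All third-order terms are then bounded crudely through the critical-point equation.
\item Only when $\abs{\lambda_n} < \varepsilon\lambda_1$ and $F_{,11} < \varepsilon^2/M_{11} \leq \kappa$ do you use Guan's first alternative. The leftover is then $\varepsilon$-small and is absorbed by $\varphi''$ and $\psi''$, with $\varepsilon$ chosen after the constant in $\psi$.
\end{itemize}

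\textbf{The $M_{15}$-term is not lower order.} Your bound $M_{15}F_{,11}(\varphi'+\abs{\psi'})^2K^2/\lambda_1$ has two problems. It uses the critical-point equation of $\lambda_1 + \dots$, whereas you need $\log\lambda_1$ so that $\kappa D_1$ beats the $-C$ commutator error. It also drops the factor $\lambda_i$ in $\partial_i\norme{\nabla u}^2 = 2\lambda_iu_i$. With $\log\lambda_1$ one has $u_{11i}/\lambda_1 = -2\varphi'\lambda_iu_i - \psi'u_i$. So $\frac{M_{15}F_{,11}}{\lambda_1(\lambda_1-\lambda_n)}\sum_i u_{11i}^2$ is of size $M_{15}F_{,11}((\varphi')^2\lambda_1^2K + (\psi')^2K)$. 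This is the same order as the good term $\varphi'F_{,11}\lambda_1^2$ when $\varphi' \approx 1/K$, and that good term is the only control on $F_{,11}$ the lemma provides.

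With your rigid normalisation ($\varphi' \in [1/(4K),1/(2K)]$, $\varphi'' = 2(\varphi')^2$), this term cannot be absorbed once $M_{15}$ is large. You need a free parameter, as in the paper's $\phi(s) = -\frac{1}{A_1}\ln(1-s/K)$, for which $\phi' \approx 1/(A_1K)$ and $\phi'' = A_1(\phi')^2$. In the paper the same parameter, taken as $A_1 \sim 1/\varepsilon^2$, also absorbs the crude bounds in the case $\abs{\lambda_n} \geq \varepsilon\lambda_1$. The constants are chosen in the order $A_2$ (from $\kappa$), then $\varepsilon$ (from $A_2$ and $\norme{u}_{\cC^0}$), then $A_1$.

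\textbf{A minor point.} $\lambda_n < 0$ does not follow from $\sum_i F_{,ii}\lambda_i \geq -M_{12}$. It is a harmless reduction only because $\nabla^2u(x_0) > -\delta\Id$ already forces $\norme{\nabla^2u(x_0)} \leq R$.
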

\begin{proof}
The proof is once again based on studying what happens when a suitable auxiliary function reaches its maximum. The introduced auxiliary function and the method used are the same as that used in \cite{Chou_Wang}\cite{Hou_Ma_Wu}\cite{Tosatti_Weinkove} for example. It is also a simplified version of Székelyhidi's proof \cite{Szekelyhidi}, where we don't have to deal with torsion.

During all the proof, we denote by $C_1,C_2,C_3,\ldots$ constants that depend only on $X$, $g$, $\bH_0$, $\delta$, $R$, $M_{10}$, $M_{11}$, $M_{12}$, $M_{13}$, $M_{14}$ and $M_{15}$. We prove the inequality in ten steps.\\

\noindent\textbf{Step 1:} Bounding the lowest eigenvalue of the Hessian matrix.

For all $x \in X$, let $\lambda_1(x) \geq \cdots \geq \lambda_n(x)$ be the eigenvalues of $\nabla^2u(x)$. Using the same notations as in the proof of Proposition \ref{PRO:Estimées L1}, we have for all $x$,
\begin{equation}\label{EQ:Borne inférieure laplacien H_0}
    \scal{\nabla^2u(x)}{\NN_0(x)} \geq -m,
\end{equation}
where $\NN_0$ and $m$ only depend on $X$ and $\bH_0$. Let $c_1$ and $C_2$ be constants such that for all $x$, $c_1\Id \leq \NN_0(x) \leq C_2\Id$. If $\lambda_n(x) \leq 0$, \eqref{EQ:Borne inférieure laplacien H_0} implies that
$$
-m \leq \scal{\nabla^2u(x)}{\NN_0(x)} \leq (n - 1)C_2\lambda_1(x) + c_1\lambda_n(x),
$$
hence
\begin{equation}\label{EQ:Norme en fonction de lambda_1}
    \norme{\nabla^2u(x)} \leq \sqrt{n}\max\{\lambda_1(x),-\lambda_n(x)\} \leq \sqrt{n}\max\left\{\lambda_1(x),\frac{m + (n - 1)C_2\lambda_1(x)}{c_1}\right\}.
\end{equation}
It means that it is enough to control $\lambda_1$ from above in order to control the norm. Let $\lambda_{\max} = \max_{x \in X} \lambda_1(x)$.\\

\noindent\textbf{Step 2:} Introducing the function $f$ and its maximum.

Let
$$
f : x \longmapsto \ln(\lambda_1(x)) + \phi\!\left(\frac{1}{2}\norme{\nabla u(x)}^2\right) - \psi(u(x)),
$$
with the convention that $f(x) = -\infty$ if $\lambda_1(x) \leq 0$, where
$$
\phi(s) = -\frac{1}{A_1}\ln\left(1 - \frac{s}{K}\right), \qquad \psi(t) = -\frac{A_2}{2\norme{u}_{\cC^0}}t^2 + A_2t
$$
with $K = \norme{\nabla u}_{\cC^0}^2 + 1$ and $A_1,A_2 \geq 2$ are constants to be determined later. Notice that when $0 \leq s \leq \frac{1}{2}K$ and $-\norme{u}_{\cC^0} \leq t \leq 0$,
\begin{equation}\label{EQ:Bornes phi}
    0 \leq \phi \leq \ln(2), \qquad \frac{1}{A_1K} \leq \phi' \leq \frac{2}{A_1K} \leq \frac{1}{K}, \qquad \phi'' = A_1(\phi')^2,
\end{equation}
\begin{equation}\label{EQ:Bornes psi}
    -\frac{3}{2}A_2\norme{u}_{\cC^0} \leq \psi \leq 0, \qquad A_2 \leq \psi' \leq 2A_2, \qquad -\psi'' \geq \frac{1}{4A_2\norme{u}_{\cC^0}}(\psi')^2.
\end{equation}

$f$ is continuous as a function with values in $\R \cup \{-\infty\}$ so it admits a maximum. Let $x_0$ be a point where $f$ reaches its maximum. We have
\begin{align}
    \lambda_{\max} & \leq \max_{x \in X} \exp\left(f(x) - \phi\!\left(\frac{1}{2}\norme{\nabla u(x)}^2\right) + \psi(u(x))\right) \nonumber\\
    & \leq \exp(f(x_0)) \textrm{ by \eqref{EQ:Bornes phi} and \eqref{EQ:Bornes psi}}, \nonumber\\
    & \leq \lambda_1(x_0)^+\exp\left(-\phi\!\left(\frac{1}{2}\norme{\nabla u(x_0)}^2\right) + \psi(u(x_0))\right) \nonumber\\
    & \leq 2\exp\left(\frac{3}{2}A_2\norme{u}_{\cC^0}\right)\lambda_1(x_0)^+ \textrm{ by \eqref{EQ:Bornes phi} and \eqref{EQ:Bornes psi}}, \label{EQ:lambda_max en fonction de lambda_1}
\end{align}
so it is enough to bound $\frac{\lambda_1(x_0)^+}{K}$ to deduce the wanted inequality.\\

\noindent\textbf{Step 3:} Computing the first differential of $f$.

Assume without loss of generality that $\lambda_1(x_0) \geq K > 0$ and $\lambda_1(x_0) \geq M_{10}$ so the assumptions on $\bH$ apply at $x_0$. Assume also that $\lambda_n(x_0) < 0$. Indeed, if $\lambda_n(x_0) \geq 0$, then $\nabla^2u(x_0)$ is positive and can immediately be bounded by $R$ since $0$ is a $\cC$-sub-solution. Consider a local orthonormal frame around $x_0$ in which $\nabla^2u(x_0)$ is diagonal with $u_{ii}(x_0) = \lambda_i(x_0)$. For all $x$ near $x_0$, $u_{11}(x) \leq \lambda_1(x)$ with equality at $x_0$ so the function
$$
\tilde{f} : x \longmapsto \ln(u_{11}(x)) + \phi\!\left(\frac{1}{2}\norme{\nabla u(x)}^2\right) - \psi(u(x))
$$
is well-defined near $x_0$ and reaches a maximum at this point. Moreover, $\tilde{f}$ is smooth on its domain and for all $i$,
\begin{align}
    \tilde{f}_i(x) & = \frac{u_{11i}(x)}{u_{11}(x)} + \phi'\!\left(\frac{1}{2}\norme{\nabla u(x)}^2\right)\sum_{k = 1}^n u_{ki}(x)u_k(x) - \psi'(u(x))u_i(x) \label{EQ:f_i estimées C2}\\
    & = \frac{u_{11i}(x_0)}{\lambda_1(x_0)} + \phi'\!\left(\frac{1}{2}\norme{\nabla u(x)}^2\right)\lambda_i(x_0)u_i(x_0) - \psi'(u(x_0))u_i(x_0) \textrm{ at } x = x_0, \nonumber\\
    & = 0 \textrm{ because $\tilde{f}$ is maximal at } x_0. \label{EQ:f_i = 0 estimées C2}
\end{align}

\noindent\textbf{Step 4:} Computing the second differential of $\tilde{f}$.

From now on, when we evaluate an expression at $x = x_0$, we omit to write it to lighten the expressions. Similarly, we write $\psi'$ instead of $\psi'(u(x_0))$ for example. From \eqref{EQ:f_i estimées C2}, we compute that
\begin{align*}
    \tilde{f}_{ij} & = \frac{u_{11ij}(x)}{u_{11}(x)} - \frac{u_{11i}(x)u_{11j}(x)}{u_{11}(x)^2} + \phi'\!\left(\frac{1}{2}\norme{\nabla u(x)}^2\right)\sum_{k = 1}^n u_{kij}(x)u_k(x)\\
    & + \phi'\!\left(\frac{1}{2}\norme{\nabla u(x)}^2\right)\sum_{k = 1}^n u_{ki}(x)u_{kj}(x) + \phi''\!\left(\frac{1}{2}\norme{\nabla u(x)}^2\right)\left(\sum_{k = 1}^n u_{ki}(x)u_k(x)\right)\left(\sum_{k = 1}^n u_{kj}(x)u_k(x)\right)\\
    & - \psi'(u(x))u_{ij}(x) - \psi''(u(x))u_i(x)u_j(x)\\
    & = \frac{u_{11ij}}{\lambda_1} - \frac{u_{11i}u_{11j}}{\lambda_1^2} + \phi'\sum_{k = 1}^n u_{kij}u_k + \delta_{ij}\phi'\lambda_i^2 + \phi''\lambda_i\lambda_ju_iu_j - \delta_{ij}\psi'\lambda_i - \psi''u_iu_j \textrm{ at } x = x_0.
\end{align*}
Since $\tilde{f}$ reaches a maximum at $x_0$, its Hessian matrix at this point is negative. By ellipticity, the matrix $(F_{,ij})$ (at $(x_0,\nabla^2u(x_0))$) is positive definite. In particular, we deduce that
\begin{align}
    0 & \geq \sum_{i,j = 1}^n F_{,ij}\tilde{f}_{ij} \nonumber\\
    & = \sum_{i,j = 1}^n F_{,ij}\frac{u_{11ij}}{\lambda_1} - \sum_{i,j = 1}^n F_{,ij}\frac{u_{11i}u_{11j}}{\lambda_1^2} + \phi'\sum_{i,j,k = 1}^n F_{,ij}u_{kij}u_k \label{EQ:Terme à borner 1}\\
    & + \phi'\sum_{i = 1}^n F_{,ii}\lambda_i^2 + \phi''\sum_{i,j = 1}^n F_{,ij}\lambda_i\lambda_ju_iu_j - \psi'\sum_{i = 1}^n F_{,ii}\lambda_i - \psi''\sum_{i,j = 1}^n F_{,ij}u_iu_j.
\end{align}

\noindent\textbf{Step 5:} Bounding \eqref{EQ:Terme à borner 1}.

First,
\begin{align}
    \sum_{i,j = 1}^n F_{,ij}u_{11ij} & \geq -nB_2\norme{\nabla^2u} + \sum_{i,j = 1}^n F_{,ij}u_{ij11} \textrm{ by \eqref{EQ:Commutativité dérivées}}, \nonumber\\
    & \geq -(nB_1 + M_{14})\norme{\nabla^2u} + \frac{1}{\lambda_1 - \lambda_n}\sum_{i,j = 1}^n (F_{,ij} - M_{15}F_{,11})u_{i11}u_{j11}\textrm{ by \eqref{EQ:Borne inférieure d_AF localement}} \nonumber\\
    & \geq -C_3\lambda_1 + \frac{1}{\lambda_1 - \lambda_n}\sum_{i,j = 1}^n (F_{,ij} - M_{15}F_{,11})u_{i11}u_{j11} \textrm{ by \eqref{EQ:Norme C1 en fonction de la valeur en x_0}}. \label{EQ:Inégalité estimées C2 1}
\end{align}

Then,
\begin{align}
    \abs{u_{i11}u_{j11} - u_{11i}u_{11j}} & \leq \abs{u_{i11}u_{j11} - u_{11i}u_{j11}} + \abs{u_{11i}u_{j11} - u_{11i}u_{11j}} \nonumber\\
    & \leq B_1\sqrt{K}(\abs{u_{j11}} + \abs{u_{11j}}) \textrm{ by \eqref{EQ:Commutativité dérivées}}, \nonumber\\
    & \leq B_1\sqrt{K}(2\abs{u_{11j}} + B_1\sqrt{K}) \textrm{ by \eqref{EQ:Commutativité dérivées}}, \nonumber\\
    & = B_1^2K - 2B_1\sqrt{K}\lambda_1\abs{\phi'\lambda_ju_j - \psi'u_j} \textrm{ by \eqref{EQ:f_i = 0 estimées C2}}, \nonumber\\
    & \leq B_1^2K + 2B_1\lambda_1^2 + 4B_1A_2K \textrm{ by \eqref{EQ:Bornes phi} and \eqref{EQ:Bornes psi}}, \nonumber\\
    & \leq C_4\lambda_1\left(\lambda_1 + A_2\right) \textrm{ because } \lambda_1 \geq K. \label{EQ:Inégalité estimées C2 2}
\end{align}

Therefore,
\begin{align}
    \sum_{i,j = 1}^n F_{,ij}\frac{u_{11ij}}{\lambda_1} & \geq -C_3 + \frac{1}{\lambda_1(\lambda_1 - \lambda_n)}\sum_{i,j = 1}^n (F_{,ij} - M_{15}F_{,11})u_{i11}u_{j11} \textrm{ by \eqref{EQ:Inégalité estimées C2 1}} \nonumber\\
    & \geq -C_3 - C_5\frac{\lambda_1 + A_2}{\lambda_1 - \lambda_n} + \frac{1}{\lambda_1(\lambda_1 - \lambda_n)}\sum_{i,j = 1}^n (F_{,ij} - M_{15}F_{,11})u_{11i}u_{11j} \textrm{ by \eqref{EQ:Inégalité estimées C2 2} and \eqref{EQ:nabla F borné}}, \nonumber\\
    & \geq -C_3 - C_5\frac{\lambda_1 + A_2}{\lambda_1} - M_{15}F_{,11}\left(\sum_{i = 1}^n \frac{u_{11i}}{\lambda_1}\right)^2 + \left(1 - \frac{\abs{\lambda_n}}{\lambda_1}\right)\sum_{i,j = 1}^n F_{,ij}\frac{u_{11i}u_{11j}}{\lambda_1^2}. \label{EQ:Inégalité estimées C2 3}
\end{align}

We then bound the third term in \eqref{EQ:Terme à borner 1}.
\begin{align}
    \phi'\sum_{i,j,k = 1}^n F_{,ij}u_{kij}u_k & \geq -nB_1\phi'K + \phi'\sum_{i,j,k = 1}^n F_{,ij}u_{ijk}u_k \textrm{ by \eqref{EQ:Commutativité dérivées} and \eqref{EQ:nabla F borné}}, \nonumber\\
    & \geq -nB_1 + \phi'\sum_{k = 1}^n F_{k,}u_k \textrm{ by \eqref{EQ:Bornes phi} and \eqref{EQ:Dérivée F = 0 localement}}, \nonumber\\
    & \geq -nB_1 - \frac{nM_{13}}{\sqrt{K}} \textrm{ by point 3 of Lemma \ref{LEM:Conséquences hypothèse} and \eqref{EQ:Bornes phi}}, \nonumber\\
    & \geq -C_6 \textrm{ because } K \geq 1. \label{EQ:Inégalité estimées C2 4}
\end{align}

Finally, combining \eqref{EQ:Inégalité estimées C2 3} and \eqref{EQ:Inégalité estimées C2 4}, we obtain that
\begin{align}
    & \quad\ \sum_{i,j = 1}^n F_{,ij}\frac{u_{11ij}}{\lambda_1} - \sum_{i,j = 1}^n F_{,ij}\frac{u_{11i}u_{11j}}{\lambda_1^2} + \phi'\sum_{i,j,k = 1}^n F_{,ij}u_{kij}u_k \nonumber\\
    & \geq -C_3 - C_5\frac{\lambda_1 + A_2}{\lambda_1} - M_{15}F_{,11}\left(\sum_{i = 1}^n \frac{u_{11i}}{\lambda_1}\right)^2 + \left(1 - \frac{\abs{\lambda_n}}{\lambda_1}\right)\sum_{i,j = 1}^n F_{,ij}\frac{u_{11i}u_{11j}}{\lambda_1^2} - \sum_{i,j = 1}^n F_{,ij}\frac{u_{11i}u_{11j}}{\lambda_1^2} - C_6 \nonumber\\
    & \geq -C_7\frac{\lambda_1 + A_2}{\lambda_1} - M_{15}F_{,11}\left(\sum_{i = 1}^n \frac{u_{11i}}{\lambda_1}\right)^2 - \frac{\abs{\lambda_n}}{\lambda_1}\sum_{i,j = 1}^n F_{,ij}\frac{u_{11i}u_{11j}}{\lambda_1^2} \nonumber\\
    & = -C_7\frac{\lambda_1 + A_2}{\lambda_1} - M_{15}F_{,11}\left(\sum_{i = 1}^n \phi'\lambda_iu_i - \psi'u_i\right)^2 - \frac{\abs{\lambda_n}}{\lambda_1}\sum_{i,j = 1}^n F_{,ij}(\phi'\lambda_iu_i - \psi'u_i)(\phi'\lambda_ju_j - \psi'u_j) \textrm{ by \eqref{EQ:f_i = 0 estimées C2}}. \label{EQ:Inégalité estimées C2 5}
\end{align}

We then do a case disjunction as in \cite{Szekelyhidi}. Let $\kappa$ given by Proposition \ref{PRO:Guan} and $\varepsilon$ a positive constant that we determine later. It will play the role of $\delta$ in \cite{Szekelyhidi}. We will choose first $A_1$ in function of $\varepsilon$, then $A_2$ independently from $A_1$ and $\varepsilon$ and finally, we will choose $\varepsilon$ in function of $A_2$.\\

\noindent\textbf{Step 6:} In this step, we assume either that $F_{,11} \geq \frac{\varepsilon^2}{M_{11}}$ or $\abs{\lambda_n} \geq \varepsilon\lambda_1$. We show a bound on $\lambda_1$.

If $F_{,11} \geq \frac{\varepsilon^2}{M_{11}}$,
\begin{equation}\label{EQ:Ellipticité uniforme}
    \sum_{i = 1}^n F_{,ii}\lambda_i^2 \geq F_{,11}\lambda_1^2 \geq \frac{\varepsilon^2}{M_{11}}\lambda_1^2.
\end{equation}
If $\abs{\lambda_n} \geq \varepsilon\lambda_1$, then by point 1 of Lemma \ref{LEM:Conséquences hypothèse},
$$
\sum_{i = 1}^n F_{,ii}\lambda_i^2 \geq F_{,nn}\lambda_n^2 \geq \frac{\varepsilon^2}{M_{11}}\lambda_1^2.
$$
In any case, \eqref{EQ:Ellipticité uniforme} holds.

Using \eqref{EQ:nabla F borné} and the fact that $\frac{\abs{\lambda_n}}{\lambda_1}$ is bounded (by \eqref{EQ:Norme en fonction de lambda_1}), \eqref{EQ:Inégalité estimées C2 5} becomes
\begin{align}
    & \quad\ \sum_{i,j = 1}^n F_{,ij}\frac{u_{11ij}}{\lambda_1} - \sum_{i,j = 1}^n F_{,ij}\frac{u_{11i}u_{11j}}{\lambda_1^2} + \phi'\sum_{i,j,k = 1}^n F_{,ij}u_{kij}u_k \nonumber\\
    & \geq -C_7\frac{\lambda_1 + A_2}{\lambda_1} - C_9\sum_{i = 1}^n (\phi'\lambda_iu_i - \psi'u_i)^2 \nonumber\\
    & \geq -C_7\frac{\lambda_1 + A_2}{\lambda_1} - \frac{C_{10}}{A_1^2K}\lambda_1^2 - C_{11}A_2^2K \textrm{ by \eqref{EQ:Bornes phi} and \eqref{EQ:Bornes psi}}. \label{EQ:Inégalité estimées C2 6}
\end{align}

Therefore,
\begin{align}
    0 & \geq \sum_{i,j = 1}^n F_{,ij}\tilde{f}_{ij} \nonumber\\
    & \geq -C_7\frac{\lambda_1 + A_2}{\lambda_1} - \frac{C_{10}}{A_1^2K}\lambda_1^2 - C_{11}A_2^2K \nonumber\\
    & + \phi'\sum_{i = 1}^n F_{,ii}\lambda_i^2 + \phi''\sum_{i,j = 1}^n F_{,ij}\lambda_i\lambda_ju_iu_j - \psi'\sum_{i = 1}^n F_{,ii}\lambda_i - \psi''\sum_{i,j = 1}^n F_{,ij}u_iu_j \textrm{ by \eqref{EQ:Inégalité estimées C2 6}} \nonumber\\
    & \geq -C_7\frac{\lambda_1 + A_2}{\lambda_1} - \frac{C_{10}}{A_1^2K}\lambda_1^2 - C_{11}A_2^2K + \frac{1}{A_1K}\sum_{i = 1}^n F_{,ii}\lambda_i^2 - 2A_2R \textrm{ by \eqref{EQ:Bornes phi}, \eqref{EQ:Bornes psi} and Proposition \ref{PRO:Guan}}, \nonumber\\
    & \geq -C_7\frac{\lambda_1 + A_2}{\lambda_1} - \frac{C_{10}}{A_1^2K}\lambda_1^2 - C_{11}A_2^2K + \frac{\varepsilon^2}{A_1M_{11}K}\lambda_1^2 - 2A_2R \textrm{ by \eqref{EQ:Ellipticité uniforme}}, \nonumber\\
    & \geq - C_{12}A_2^2K + \left(\frac{\varepsilon^2}{A_1M_{11}K} - \frac{C_{10}}{A_1^2K}\right)\lambda_1^2. \label{EQ:Inégalité estimées C2 7}
\end{align}
We choose
$$
A_1 = \frac{2C_{10}M_{11}}{\varepsilon^2},
$$
and we deduce from \eqref{EQ:Inégalité estimées C2 7} that
\begin{equation}\label{EQ:Borne finale lambda_1 cas 1}
    \lambda_1 \leq \frac{\sqrt{2C_{10}C_{12}}A_2M_{11}}{\varepsilon^2}K.
\end{equation}

\noindent\textbf{Step 7:} From now on, we assume that $F_{,11} < \frac{\varepsilon^2}{M_{11}}$ and $\abs{\lambda_n} \leq \varepsilon\lambda_1$ because else, \eqref{EQ:Borne finale lambda_1 cas 1} holds. We similarly show a bound on $\lambda_1$.

We choose $\varepsilon$ small enough so $\frac{\varepsilon^2}{M_{11}} \leq \kappa$. In particular, $F_{,11} < \kappa$ so by Proposition \ref{PRO:Guan}, we have
\begin{equation}\label{EQ:Guan}
    -\sum_{i = 1}^n F_{,ii}\lambda_i \geq \kappa.
\end{equation}

Therefore,
\begin{align}
    0 & \geq \sum_{i,j = 1}^n F_{,ij}\tilde{f}_{ij} \nonumber\\
    & \geq -C_7\frac{\lambda_1 + A_2}{\lambda_1} - M_{15}F_{,11}\left(\sum_{i = 1}^n \phi'\lambda_iu_i - \psi'u_i\right)^2 - \frac{\abs{\lambda_n}}{\lambda_1}\sum_{i,j = 1}^n F_{,ij}(\phi'\lambda_iu_i - \psi'u_i)(\phi'\lambda_ju_j - \psi'u_j) \nonumber\\
    & + \phi'\sum_{i = 1}^n F_{,ii}\lambda_i^2 + \phi''\sum_{i,j = 1}^n F_{,ij}\lambda_i\lambda_ju_iu_j - \psi'\sum_{i = 1}^n F_{,ii}\lambda_i - \psi''\sum_{i,j = 1}^n F_{,ij}u_iu_j \nonumber\\
    & \geq -C_7\frac{\lambda_1 + A_2}{\lambda_1} - C_{13}F_{,11}(\phi')^2K\lambda_1 - C_{14}F_{,11}(\psi')^2K - \varepsilon\sum_{i,j = 1}^n F_{,ij}(\phi'\lambda_iu_i - \psi'u_i)(\phi'\lambda_ju_j - \psi'u_j) \nonumber\\
    & + F_{,11}\phi'\lambda_1^2 + \phi''\sum_{i,j = 1}^n F_{,ij}\lambda_i\lambda_ju_iu_j + \kappa\psi' - \psi''\sum_{i,j = 1}^n F_{,ij}u_iu_j \textrm{ by \eqref{EQ:Bornes phi}, \eqref{EQ:Bornes psi} and \eqref{EQ:Guan}}, \nonumber\\
    & = -C_7\frac{\lambda_1 + A_2}{\lambda_1} + \kappa\psi' + F_{,11}\left(\phi'\lambda_1^2 - C_{13}(\phi')^2K\lambda_1 - C_{14}(\psi')^2K\right) \nonumber\\
    & + \sum_{i,j = 1}^n F_{,ij}u_iu_j\left((\phi'' - \varepsilon(\phi')^2)\lambda_i\lambda_j + \varepsilon\phi'\psi'(\lambda_i + \lambda_j) + (-\psi'' - \varepsilon(\psi')^2)\right) \nonumber\\
    & \geq -C_7\frac{\lambda_1 + A_2}{\lambda_1} + \kappa A_2 \label{EQ:Terme à borner 2}\\
    & + F_{,11}\left(\frac{1}{A_1K}\lambda_1^2 - \frac{4C_{13}A_1^2}{K}\lambda_1 - 4C_{14}A_2^2K\right) \label{EQ:Terme à borner 3}\\
    & + \sum_{i,j = 1}^n F_{,ij}u_iu_j\left((\phi'' - \varepsilon(\phi')^2)\lambda_i\lambda_j + \varepsilon\phi'\psi'(\lambda_i + \lambda_j) + (-\psi'' - \varepsilon(\psi')^2)\right) \textrm{ by \eqref{EQ:Bornes phi} and \eqref{EQ:Bornes psi}}. \label{EQ:Terme à borner 4}
\end{align}

\noindent\textbf{Step 8:} Bounding \eqref{EQ:Terme à borner 2} and \eqref{EQ:Terme à borner 3}.

We choose
$$
A_2 = \frac{2C_7}{\kappa},
$$
so
$$
-C_7\frac{\lambda_1 + A_2}{\lambda_1} + \kappa A_2 = C_7\left(1 - \frac{2C_7}{\kappa\lambda_1}\right),
$$
hence the positivity of \eqref{EQ:Terme à borner 2} as long as $\lambda_1 > \frac{2C_7}{\kappa}$, which we may assume without loss of generality.

Similarly, we easily see that \eqref{EQ:Terme à borner 3} is non-negative if $\frac{\lambda_1}{K}$ is large enough (in function of $A_1$ and $A_2$).

\noindent\textbf{Step 9:} Bounding \eqref{EQ:Terme à borner 4}.

For all indices $i$ and $j$, we have
\begin{align}
    & \quad\ (\phi'' - \varepsilon(\phi')^2)\lambda_i\lambda_j + \varepsilon\phi'\psi'(\lambda_i + \lambda_j) + (-\psi'' - \varepsilon(\psi')^2) \nonumber\\
    & \geq (A_1 - 1)\varepsilon(\phi')^2\lambda_i\lambda_j + \varepsilon\phi'\psi'(\lambda_i + \lambda_j) + \left(\frac{1}{4A_2\norme{u}_{\cC^0}} - \varepsilon\right)(\psi')^2 \textrm{ by \eqref{EQ:Bornes phi} and \eqref{EQ:Bornes psi}}, \nonumber\\
    & = (A_1 - 1)\left(\phi'\lambda_i + \frac{\varepsilon\psi'}{A_1 - 1}\right)\left(\phi'\lambda_j + \frac{\varepsilon\psi'}{A_1 - 1}\right) + \left(\frac{1}{4A_2\norme{u}_{\cC^0}} - \varepsilon - \frac{\varepsilon^2}{A_1 - 1}\right)(\psi')^2 \nonumber\\
    & \geq (A_1 - 1)\left(\phi'\lambda_i + \frac{\varepsilon\psi'}{A_1 - 1}\right)\left(\phi'\lambda_j + \frac{\varepsilon\psi'}{A_1 - 1}\right), \label{EQ:Inégalité estimées C2 8}
\end{align}
as long as $\varepsilon$ is small enough. Notice that $\varepsilon$ depends linearly $\frac{1}{\norme{u}_{\cC^0} + 1}$, thus $A_1$ depends quadratically in $\norme{u}_{\cC^0} + 1$ but this quadratic term can be absorbed in \eqref{EQ:lambda_max en fonction de lambda_1} in the exponential so we will eventually obtain a bound on $\frac{\lambda_{\max}}{K}$ which is exponential in $\norme{u}_{\cC^0} + 1$.

Since the matrix $(F_{,ij}u_iu_j)$ is positive, we deduce from \eqref{EQ:Inégalité estimées C2 8} that \eqref{EQ:Terme à borner 4} is non-negative.\\

\noindent\textbf{Step 10:} Conclusion.

We showed in steps 7, 8 and 9 that \eqref{EQ:Terme à borner 2} is positive and \eqref{EQ:Terme à borner 3} and \eqref{EQ:Terme à borner 4} are non-negative if $\frac{\lambda_1}{K}$ is large enough, which contradicts the non-negativity of $\sum_{i,j = 1}^n F_{,ij}\tilde{f}_{ij}$. We deduce the desired bound by contradiction, and we conclude with \eqref{EQ:Norme en fonction de lambda_1} and \eqref{EQ:lambda_max en fonction de lambda_1}.
\end{proof}

\subsection{\texorpdfstring{$\cC^{2,\alpha}$}{C2a} estimates}\label{SEC:Estimées C2alpha}

In this sub-section, we only require point 3 of Lemma \ref{LEM:Conséquences hypothèse} to hold and we don't require $0$ to be a sub-solution. We introduce
$$
M_{13}' = \sup\left\{\frac{1}{(\norme{A} + 1)\scal{\pi_{\End_S(T_xX,g)}\NN_\bH(x,A)}{\Id}} \ \middle|\ (x,A) \in \End_S(TX,g), \norme{A} \leq t\right\}.
$$
It is a well-defined number because by point 3 of Lemma \ref{LEM:Conséquences hypothèse}.

Once we have a bound on the $\cC^2$-norm, the $\cC^{2 + \alpha}$ norms are controlled thanks to the Evans--Krylov theory \cite{Evans}\cite{Krylov}. More precisely, we will use the estimate of Caffarelli and Cabré \cite[Theorem 8.1]{Caffarelli_Cabré} for elliptic equations that depend on the variable $x$. Since it requires uniform ellipticity, we introduce the following positive non-increasing function,
$$
\nu : \fonction{\R_+}{\R_+^*}{t}{\min\{\lambda_{\min}(\NN_{H_x}(A)) \mid (x,A) \in \End_S(TX,g), \norme{A} \leq t\}}.
$$

\begin{proposition}[$\cC^{2,\alpha}$ \textit{a priori} estimates]\label{PRO:Estimées C2alpha}
    There exist positive constants $\alpha = \alpha(X,g,\nu(\norme{\nabla^2u}_{\cC^0}))$ and $C = C(X,g,M_{13}',\nu(\norme{\nabla^2u}_{\cC^0})))$ such that
    $$
    \norme{u}_{\cC^{2,\alpha}} \leq C(\norme{u}_{\cC^0} + 1).
    $$
\end{proposition}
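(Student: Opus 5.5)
The plan is to reduce the estimate to a local Evans--Krylov--Caffarelli--Cabré statement for a uniformly elliptic, concave equation with $x$-dependence, and then patch the local bounds together by compactness of $X$. Set $t = \norme{\nabla^2u}_{\cC^0}$. Work in a normal coordinate chart around an arbitrary point $x_0$, on a ball $B_r$ whose radius depends only on $(X,g)$. In these coordinates the Hessian endomorphism reads $\nabla^2u = g^{-1}(D^2u - \Gamma^k\partial_ku)$. So the equation $F(x,\nabla^2u(x)) = 0$ becomes
$$
\Phi(x,D^2u(x)) := F\big(x,g^{-1}(x)(D^2u(x) - \Gamma^k(x)\partial_ku(x))\big) = 0 .
$$
Here $F = F_\Id$ is the globally defined function of Sub-section \ref{SEC:Famille}. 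The first-order term will be frozen and treated as part of the $x$-dependence. Concretely, I set $b(x) = \Gamma^k(x)\partial_ku(x)$. Its $\cC^{0,1}$ norm is controlled by $\norme{u}_{\cC^2}$, and in particular its $\cC^\alpha$ norm is.

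\textbf{Checking the hypotheses.} For fixed $x$, the map $M \mapsto F(x,g^{-1}(x)(M - b(x)))$ is concave, by Proposition \ref{PRO:F_I concave}. By Proposition \ref{PRO:Différentielles F_I}, its derivative in $M$ is $\NN/\scal{\NN}{(0,\Id)}$ composed with $g^{-1}$. On the set $\{\norme{A} \leq t\}$ the eigenvalues of $\pi_{\End_S}\NN_\bH$ are bounded below by $\nu(t)$ and above by $1$. Moreover $\scal{\NN}{(0,\Id)}$ lies between a constant given by $M_{13}'$ (this is what point 3 of Lemma \ref{LEM:Conséquences hypothèse} provides) and $\sqrt n$. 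Hence on the relevant range the operator is uniformly elliptic, with ellipticity constants depending only on $\nu(t)$, $M_{13}'$ and $g$.

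Uniform ellipticity is only available for matrices of norm at most $t$, so I extend $\Phi$ outside that range. I will replace $\Phi$ by a concave, uniformly elliptic extension that agrees with it on $\{\norme{A} \leq t+1\}$. A natural choice is the infimum of the supporting affine functions of $\Phi$ taken at points of that set. The resulting infimum is concave. It has the same ellipticity bounds, since each affine piece has gradient in the controlled set. It coincides with $\Phi$ near $D^2u$, because $\Phi$ is concave there.

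\textbf{The $x$-dependence and the conclusion.} The Caffarelli--Cabré theorem \cite[Theorem 8.1]{Caffarelli_Cabré} requires a Hölder modulus of $x \mapsto \Phi(x,M)$, of the form $\abs{\Phi(x,M) - \Phi(y,M)} \leq C\abs{x-y}^{\bar\alpha}(\norme{M}+1)$. The smoothness of $F$ in $x$ comes from point 3 of Lemma \ref{LEM:Conséquences hypothèse}, which bounds $\abs{F_{k,}}$. Together with the Lipschitz dependence through $g^{-1}$ and $b$, this gives the required modulus. This is exactly why $M_{13}'$ is normalised with $(\norme{A}+1)$.

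The theorem then yields $\alpha = \alpha(n,\text{ellipticity})$ and
$$
\norme{u}_{\cC^{2,\alpha}(B_{r/2})} \leq C\big(\norme{u}_{L^\infty(B_r)} + \text{data}\big).
$$
The data term is affine in the size of the solution, which produces the factor $(\norme{u}_{\cC^0}+1)$. Finally, I cover $X$ by finitely many such balls.

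\textbf{Main obstacle.} I expect the delicate step to be the modulus in $x$. Hidden in $b(x)$ is the first-order term. To avoid circularity, I will estimate its Hölder norm with a bound linear in $\norme{u}_{\cC^0}+1$, using interpolation: $\norme{\nabla u}_{\cC^\alpha} \leq \epsilon\norme{u}_{\cC^{2,\alpha}} + C_\epsilon\norme{u}_{\cC^0}$. The $\epsilon$ term is then absorbed into the left-hand side.
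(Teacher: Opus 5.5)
Your route is the same as the paper's. You apply Caffarelli--Cabr\'e's Theorem 8.1 on small geodesic balls and then cover $X$ by finitely many of them. The ellipticity constants come from $\nu$ for the lower bound and from $\tr\nabla_AF = 1$ for the upper bound. The latter is simpler than your detour through $\scal{\NN_\bH}{(0,\Id)}$ and $M_{13}'$, since $\nabla_AF = \NN_{H_x}/\scal{\NN_{H_x}}{\Id}$ directly gives eigenvalues in $[\nu/\sqrt n, 1]$. The Lipschitz modulus in $x$ comes from the bound on $\nabla_xF/(\norme{A}+1)$ encoded in $M_{13}'$.

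You are more careful than the paper on two points it passes over. First, the paper identifies $\nabla^2u$ with the coordinate Hessian, so the Christoffel term $\Gamma^k\partial_ku$ never appears. Second, it invokes Theorem 8.1 with ellipticity known only on $\{\norme{A}\leq t\}$, whereas that theorem (via Evans--Krylov for the frozen equations) needs concavity and uniform ellipticity on all symmetric matrices. Freezing $b$ and passing to a concave uniformly elliptic envelope fixes both.

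Two corrections, though.

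\textbf{The absorption step.} The absorption in your ``main obstacle'' paragraph does not work as stated, and it is not needed. In Theorem 8.1 the size of the $x$-oscillation $\beta$ does not enter as an additive data term. It governs the scale at which the frozen-coefficient approximation becomes effective, so the constant depends non-linearly, and in an unspecified way, on the H\"older constant of $b$. An $\epsilon\norme{u}_{\cC^{2,\alpha}}$ hidden inside that constant cannot simply be moved to the left-hand side.

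There is also no circularity to avoid. $b$ is Lipschitz with constant at most $C\norme{u}_{\cC^2}$, and when the proposition is used in the continuity method this is already controlled by the $\cC^1$ and $\cC^2$ estimates. So just let $C$ depend on $\norme{u}_{\cC^2}$. Once the Christoffel term is accounted for, a dependence only on $X,g,M_{13}'$ and $\nu(\norme{\nabla^2u}_{\cC^0})$ is not literally what the paper's argument delivers either.

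\textbf{The envelope.} With your envelope $\inf_{N\in K}\{\Phi(x,N)+\scal{\nabla_M\Phi(x,N)}{M-N}\}$, the oscillation in $x$ also involves $x$-derivatives of $\nabla_M\Phi$. These are second derivatives of $F$ on a compact set, not only the quantity $M_{13}'$. If you want the modulus controlled by first derivatives alone, take instead
$$\inf_L\Big\{\scal{L}{M}+\sup_{N\in K}\big(\Phi(x,N)-\scal{L}{N}\big)\Big\},$$
over $L$ with eigenvalues in the ellipticity range. This is concave and uniformly elliptic with the same constants. It equals $\Phi(x,\cdot)$ on $K$, by global concavity of $\Phi(x,\cdot)$ (take $L=\nabla_M\Phi(x,M)$). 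Its oscillation in $x$ is at most $\sup_{N\in K}\abs{\Phi(x,N)-\Phi(y,N)}$.
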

\begin{proof}
We use \cite[Theorem 8.1]{Caffarelli_Cabré}. Since $\norme{\nabla^2u(x)}$ at each point is bounded by $\norme{\nabla^2u}_{\cC^0}$, we have for all $x$,
$$
\lambda_{\inf}(\nabla_AF(x,\nabla^2u(x))) = \frac{\lambda_{\inf}(\NN_{H_x}(\nabla^2u(x)))}{\norme{\nabla_AF(x,\nabla^2u(x))}} \geq \nu(\norme{\nabla^2u}_{\cC^0}),
$$
and
$$
\lambda_{\max}(\nabla_AF(x,\nabla^2u(x))) \leq \tr(\nabla_AF(x,\nabla^2u(x))) = 1.
$$
We deduce that the equation $F(x,\nabla^2u(x)) = 0$ is uniformly elliptic in its second variable with constants of uniform ellipticity that depend only on $\nu(\norme{\nabla^2u}_{\cC^0})$.

Fix a point $x_0 \in X$. By working in a geodesic ball $B(x_0,r)$ of small enough radius, we may identify $T_xX$ with $\R^n$ for $x$ close enough to $x_0$. Using Caffarelli and Cabré's notations, for all $x \in B(x_0,r)$,
$$
\tilde{\beta}(x) = \sup_{A \in \End_S(\R^n)} \frac{\abs{F(x,A) - F(x_0,A)}}{\norme{A} + 1} \leq \sup_{(y,A) \in \End_S(TB(x_0,r),g)}\frac{\norme{\nabla_xF(y,A)}}{\norme{A} + 1}d(x,x_0) \leq M_{13}'d(x,x_0).
$$
In particular, $\tilde{\beta}$ is bounded and Lipschitz at $x_0$ so we can conclude by \cite[Theorem 8.1]{Caffarelli_Cabré}.
\end{proof}

\section{Existence and uniqueness of solutions}\label{SEC:Méthode continuité}

\subsection{Uniqueness}

We start by showing an easy uniqueness result. Here, we don't require any assumption on $\bH$ other than being a smooth $\End_S^{++}(TX,g)$-elliptic convex hypersurface of $\End_S(TX,g)$.

\begin{theorem}[Uniqueness of solutions]\label{THE:Unicité}
    Let $\bH^1$ and $\bH^2$ be smooth $\End_S^{++}(TX,g)$-elliptic convex hypersurfaces of $\End_S(TX,g)$ such that $\bH^1 \subset (\bH^2)^-$. Let $u_1$ and $u_2$ be smooth functions that satisfy for each $i \in \{1,2\}$,
    $$
    \forall x \in X, \quad \nabla^2u_i(x) \in H_x^i.
    $$
    Then, $u_1 - u_2$ is constant.
\end{theorem}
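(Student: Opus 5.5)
The argument is a maximum-principle comparison applied to $w = u_1 - u_2$, using concavity of the functions $F^i = F_\Id$ attached to $\bH^i$. Write $F^1, F^2$ for these functions. By Proposition \ref{PRO:F_I bien définie}, each $F^i$ is characterised by $A - F^i(x,A)\Id \in H^i_x$.

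The first step is to compare the two functions. Since $\bH^1 \subset (\bH^2)^-$, every $A \in H^1_x$ satisfies $F^2(x,A) \leq 0$. For the solutions this gives
$$
F^1(x,\nabla^2u_1(x)) = 0, \qquad F^2(x,\nabla^2u_1(x)) \leq 0, \qquad F^2(x,\nabla^2u_2(x)) = 0.
$$
By Proposition \ref{PRO:F_I concave}, $F^2(x,\cdot)$ is concave. It is also $\End_S^{++}$-elliptic with $\scal{\nabla_AF^2}{\Id} = 1$. Concavity gives the pointwise inequality
$$
0 \geq F^2(x,\nabla^2u_1) - F^2(x,\nabla^2u_2) \geq \scal{\nabla_AF^2(x,\nabla^2u_1(x))}{\nabla^2w(x)}.
$$
The symmetric version gives
$$
0 \leq \scal{\nabla_AF^2(x,\nabla^2u_2(x))}{\nabla^2w(x)}.
$$

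To use these inequalities, I would rather write $F^2(x,\nabla^2u_1) - F^2(x,\nabla^2u_2)$ as the exact integral
$$
\int_0^1 \scal{\nabla_AF^2(x,\nabla^2u_2 + s\nabla^2w)}{\nabla^2w}\,ds = L w.
$$
Here $L$ is a linear second-order operator with smooth coefficient matrix $a(x) = \int_0^1 \nabla_AF^2(\cdots)\,ds$. This matrix is positive definite, with eigenvalues bounded away from $0$ on compact $X$, by ellipticity: the normal lies in the open dual cone after the strictly convex closure argument. Then $Lw \leq 0$ on $X$, so $w$ is an $L$-supersolution of a uniformly elliptic operator without zeroth-order term on a closed manifold.

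By the strong minimum principle (Hopf), $w$ attains its minimum, and is therefore constant. Alternatively, one can mimic the proof of Proposition \ref{PRO:Estimées L1}: take a positive generator $k$ of $\ker L^*$. Then $\int_X k\,Lw = 0$, and since $Lw \leq 0$ this forces $Lw \equiv 0$. The strong maximum principle then gives that $w$ is constant.

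The main obstacle is justifying uniform positivity of the coefficients $a(x)$. We know $\NN_{H_x}(A) \in \Gamma^\lor$, the closed dual cone, so the matrix is only positive semidefinite a priori. I would handle this by noting that positivity against $\overline{\Gamma}\setminus\{0\}$ (the Remark after Definition \ref{DEF:Hypersurface Gamma-elliptique}) means $\scal{\NN}{vv^*} > 0$ for all $v \neq 0$. Hence the normal is positive definite, and compactness of $\{(x,A): \norme{A} \leq \max(\norme{\nabla^2u_1},\norme{\nabla^2u_2})\}$ yields a uniform lower bound on its eigenvalues, as in the function $\nu$ used for Proposition \ref{PRO:Estimées C2alpha}.
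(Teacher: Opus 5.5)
Your proof is correct. It follows the same overall strategy as the paper (linearise, then apply a maximum principle), but the linearisation and the final step differ.

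The paper first translates so that $u_1 = 0$. It then uses the operator $\square u = \scal{\NN_{H^1_x}(0)}{\nabla^2 u}$, whose coefficient is the normal of $H^1_x$ at $\nabla^2 u_1(x)$. From $H^2_x \subset (H^1_x)^+ \subset H_{0,x}^+$, which is the supporting-hyperplane property of the convex set $(H^1_x)^+$, it gets $\square u_2 \geq 0$. It concludes by integrating against the positive generator $k$ of $\ker \square^*$ built in the proof of the $L^1$ estimate, and then using $\ker\square = \R 1$.

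You instead write $F^2(x,\nabla^2u_1) - F^2(x,\nabla^2u_2)$ as a mean-value integral, which gives $Lw \leq 0$, and you apply Hopf's strong minimum principle directly. This buys two things:
\begin{itemize}
\item The integral representation uses only smoothness and ellipticity of $F^2$, not concavity. So your argument shows that uniqueness does not need point 3 of Definition~\ref{DEF:Hypersurface Gamma-elliptique}.
\item Hopf removes the need for the Fredholm/Harnack construction of $k$. The paper's last step ($\square u = 0 \Rightarrow u$ constant) rests on a maximum principle anyway.
\end{itemize}
The paper's route has the advantage of reusing the operator $\square$ already set up for the $L^1$ estimate, and it fits its convex-geometric picture. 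Both arguments need the normals to be positive definite, i.e.\ to lie in the open dual cone as in the Remark after the definition. You identify and justify this correctly. Both also tacitly use that $X$ is connected.

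There is one minor slip, which does not affect the argument. Your ``symmetric version'' $0 \leq \scal{\nabla_AF^2(x,\nabla^2u_2)}{\nabla^2 w}$ does not follow from concavity. Concavity at $\nabla^2 u_2$ only gives $\scal{\nabla_AF^2(x,\nabla^2u_2)}{\nabla^2 w} \geq F^2(x,\nabla^2u_1)$, and the right-hand side is $\leq 0$. You never use this inequality, so nothing breaks. In fact your first inequality alone already suffices: it gives an elliptic operator with coefficient $\nabla_AF^2(x,\nabla^2u_1(x))$ and $Lw \leq 0$, which is the closest analogue of the paper's $\square$.
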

\begin{proof}
Assume up to a translation that $u_1 = 0$ and set $u = u_2$. Let
$$
\bH_0 = \{(x,A) \in \End_S(TX,g) \mid \scal{\NN_{H_x^1}(0)}{A} = 0\}.
$$
It is a family of $\End_S^{++}(TX,g)$-elliptic convex hypersurfaces as in Sub-section \ref{SEC:Estimées C0}. Let $\square$ be the associated linear elliptic operator. We know (see the proof of Proposition \ref{PRO:Estimées L1}) that $\ker(\square) = \R 1$ and $\ker(\square) = \R k$ for some smooth positive function $k : X \rightarrow \R$.

Moreover, for all $x$, $\nabla^2u(x) \in H_x^2 \subset (H_x^1)^+ \subset H_{0,x}^+$ so $\square u(x) \geq 0$ where $\NN_{0,x}$ is the normal vector to $H_{0,x}$. We deduce that
$$
0 \leq \int_X \square u(x)k(x) \, d\mu_g(x) = \int_X u(x)\square^*k(x) \, d\mu_g(x) = 0.
$$
Therefore, the inequality must be an equality so $\square u = 0$ \textit{i.e.} $u$ is constant.
\end{proof}

\subsection{Proof of Theorem \ref{THEI:Existence solution 1}}

Let $\bH$ be an almost spectral family of $\End_S^{++}(TX,g)$-elliptic convex hypersurfaces of $\End_S(TX,g)$ with tuple $\underline{M}$. Let $\bH_0 \subset \bH^-$ as in Sub-section \ref{SEC:Estimées C0}. To prove the existence theorem, we start with a lemma that will help us get uniform bounds.
\begin{lemma}\label{LEM:Translation hypersurface presque spectrale}
    For all smooth functions $h : X \rightarrow \R$, the family of $\End_S^{++}(TX,g)$-elliptic convex hypersurfaces defined by
    $$
    \bH' = \{(x,A) \in \End_S(TX,g) \mid F(x,A) = h(x)\},
    $$
    is almost spectral with a tuple that only depends on $\underline{M}$ and $\norme{h}_{\cC^2}$. Moreover,
    $$
    \bH_0 - \norme{\nabla^2h}_{\cC^0}\Id \subset \bH^-.
    $$
\end{lemma}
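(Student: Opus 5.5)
The plan is to identify $\bH'$ explicitly as a fibrewise translate of $\bH$ and then transport every item of Definition \ref{DEF:Presque spectrale} through that translation. By Proposition \ref{PRO:F_I bien définie}, $F(x,A) = h(x)$ holds if and only if $A - h(x)\Id \in H_x$. Hence
$$
\bH' = \{(x,A + h(x)\Id) \mid (x,A) \in \bH\},
$$
and the associated function is $F'(x,A) = F(x,A - h(x)\Id) + h(x)$. Each $H'_x$ is a translate of $H_x$, so it is again $\End_S^{++}$-elliptic convex. Transversality to the fibres is preserved because the map $(x,A) \mapsto (x,A + h(x)\Id)$ is a fibre-preserving diffeomorphism.

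\textbf{Almost spectrality.} Fix $(x_0,A_0) \in \bH'$ and set $A_0' = A_0 - h(x_0)\Id \in H_{x_0}$. I would take
$$
M_1' = 2M_1 + 2\sqrt{n}\norme{h}_{\cC^0}.
$$
Then $\norme{A_0} \geq M_1'$ implies $\norme{A_0'} \geq \max\{M_1, \norme{A_0}/2\}$. The data $(f,B,G)$ given by the hypothesis at $(x_0,A_0')$ produce local data for $\bH'$:
$$
f' = f, \qquad B' = B - h\Id, \qquad G'(x,A) = G(x,A - h(x)\Id).
$$
With these choices $f'(\lambda(B'(x_0) + A_0)) = f(\lambda(B(x_0) + A_0'))$, so points 1--4, the quantity $\cF$ and the two-sided bound \eqref{EQ:Hypothèse encadrement nabla_AG} carry over unchanged. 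The first-order bounds are direct:
\begin{itemize}
\item $\norme{B'} \leq M_2 + \sqrt{n}\norme{h}_{\cC^0}$ and $\norme{\nabla B'} \leq M_3 + \sqrt{n}\norme{\nabla h}_{\cC^0}$;
\item $\norme{\nabla_A G'} = \norme{\nabla_A G} \leq \frac{M_5}{\norme{A_0'}}\cF \leq \frac{2M_5}{\norme{A_0}}\cF$;
\item by the chain rule, $\nabla_x G' = \nabla_x G - \scal{\nabla_A G}{\Id}\,dh$, whose norm is at most $\bigl(M_4 + \frac{\sqrt{n}M_5}{M_1}\norme{\nabla h}_{\cC^0}\bigr)\cF$.
\end{itemize}

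\textbf{Second-order condition.} The map $(\hat{x},\hat{A}) \mapsto (\hat{x},\hat{A} - (dh\,\hat{x})\Id)$ sends $T\bH'$ onto $T\bH$. Writing $\hat{A}' = \hat{A} - (dh\,\hat{x})\Id$, we get
$$
d^2G'((\hat{x},\hat{A}),(\hat{x},\hat{A})) = d^2G((\hat{x},\hat{A}'),(\hat{x},\hat{A}')) - d_AG\bigl(\nabla^2h(\hat{x},\hat{x})\Id\bigr).
$$
The last term is bounded by $\sqrt{n}M_5\norme{h}_{\cC^2}\cF\norme{\hat{x}}^2/M_1$, which can be absorbed into a new $M_8'$. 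The genuine difficulty is that \eqref{EQ:Hypothèse nabla2G} for $G$ yields the concavity term $\frac{M_9}{2}d^2(f \circ \lambda)(\hat{A}',\hat{A}')$, whereas for $G'$ we need it in $\hat{A}$. The same mismatch appears in the spectral part, since $B'$ contributes $dB'\,\hat{x} = dB\,\hat{x} - (dh\,\hat{x})\Id$.

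This is the step I expect to be the main obstacle. For the negative semidefinite form $Q = d^2(f \circ \lambda)$ and any $\eta > 0$, I would first try the splitting
$$
Q(a - b) \geq (1 + \eta)Q(a) + (1 + \eta^{-1})Q(b), \qquad a = \hat{A},\ b = (dh\,\hat{x})\Id.
$$
This would need a bound $\abs{Q(\Id,\Id)} \lesssim \cF\norme{A_0}$ to be absorbed into the $M_8$ term, together with a small loss in $M_9$ (replacing $M_9$ by $M_9/(1+\eta)$) so that condition 7, $M_7 + M_9 \leq 1$, still holds. If no such bound on $Q(\Id,\Id)$ is available, the fallback is to avoid the shift altogether by keeping $B$ and $G$ and putting the $h\Id$-translation only into the argument of $f$ in a way that exploits the fact that $\Id$ commutes with everything, so that $\lambda(B + A - h\Id) = \lambda(B + A) - h\,\lambda(\Id)$.

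\textbf{The inclusion for $\bH_0$.} Since $\scal{\nabla_A F}{\Id} = 1$, we have $F(x,A - c\Id) = F(x,A) - c$ for every $c \in \R$. By the assumption $H_{0,x} \subset H_x^-$, if $A \in H_{0,x}$ then $F(x,A) \leq 0$, and hence
$$
F(x,A - c\Id) \leq -c.
$$
The target set $\bH^-$ requires $F(x,A - c\Id) \leq 0$, which already holds for any $c \geq 0$. If instead the intended target is $(\bH')^-$, which requires $F(x,A - c\Id) \leq h(x)$, it suffices to have $c \geq \max(-h)$. Such a shift should be read as $\norme{h}_{\cC^0}$; it can be replaced by $\norme{\nabla^2 h}_{\cC^0}$ only after normalising $h$, for example by adding a constant.
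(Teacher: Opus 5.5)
Your construction is the paper's: translated data $f'=f$, $B'=B-h\Id$, $G'(x,A)=G(x,A-h(x)\Id)$, plus a threshold on $\norme{A_0}$ guaranteeing $\norme{A_0'}\geq M_1$ and $\norme{A_0}\leq 2\norme{A_0'}$. Your signs are the correct ones; the paper writes $B+h\Id$ and $G(x,A+h(x)\Id)$. Your checks of points 1--5 and your formula for $d^2G'$ are fine. One small slip: the function attached to $\bH'$ is $F(x,A-h(x)\Id)=F(x,A)-h(x)$, whereas your $F(x,A-h(x)\Id)+h(x)$ is just $F$.

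The genuine gap is point 6, which you leave open, and neither of your routes closes it. The splitting route needs $-d^2(f\circ\lambda)(\Id,\Id)\leq C\cF\norme{A_0}$. This is not among the hypotheses of Definition \ref{DEF:Presque spectrale}. It already fails for $f=\sum_i\log\lambda_i$ with $n=3$, $B=0$, at $\lambda=(L,L,L^{-2})$: there $-d^2(f\circ\lambda)(\Id,\Id)=\sum_i\lambda_i^{-2}\approx L^4$, while $\cF\norme{A_0}\approx\sqrt{2}L^3$. The fallback is not admissible, because $f$ must be one fixed function on $\Gamma$. Only the constant $h(x_0)$ can be pushed into $f$. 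The variation $h-h(x_0)$ then has to go into $B$ or $G$, and its differential $dh(x_0)$ reproduces exactly the same $(dh\,\hat{x})\Id$ shift in the second-order term.

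The paper simply asserts that point 6 holds with the same $M_9$. That is immediate if the spectral term in point 6 is read as the second derivative along $(\hat{x},\hat{A})$ of the composite $\Phi_0(x,A)=f(\lambda(B(x)+A))$. This is how the paper itself writes it when applying the hypothesis in step 6 of the proof of Lemma \ref{LEM:Conséquences hypothèse}, as $d^2(f\circ\lambda)((\hat{x},\hat{A}),(\hat{x},\hat{A}))$. Under that reading:
\begin{itemize}
\item $\Phi_0'(x,A)=\Phi_0(x,A-h(x)\Id)$ transforms exactly like $G'$, so $d^2\Phi_0'((\hat{x},\hat{A}),(\hat{x},\hat{A}))=d^2\Phi_0((\hat{x},\hat{A}'),(\hat{x},\hat{A}'))-\cF\,\nabla^2h(\hat{x},\hat{x})$.
\item The inequality for $G$ at $(\hat{x},\hat{A}')$ therefore converts directly into the one for $G'$ at $(\hat{x},\hat{A})$.
\item The extra term $\frac{M_9}{2}\cF\norme{\nabla^2h}_{\cC^0}\norme{\hat{x}}^2$ and your $d_AG(\nabla^2h(\hat{x},\hat{x})\Id)$ term are absorbed into $M_8$.
\item $M_7$ and $M_9$ are unchanged, so point 7 survives, and no comparison of $Q(\hat{A})$ with $Q(\hat{A}')$ is ever needed.
\end{itemize}
Under the strictly literal reading, with $\hat{A}$ alone, your worry is legitimate, and the paper's proof does not supply the missing argument either.

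Your treatment of the inclusion is correct. For the literal target $\bH^-$, any nonnegative shift works. For $(\bH')^-$, a shift of size $\norme{h}_{\cC^0}$ is what is needed, and that is what the continuity method actually uses. However, you cannot ``normalise $h$ by adding a constant'', since that changes $\bH'$.
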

\begin{proof}
Let $(x_0,A_0) \in \bH'$ with $\norme{A_0} \geq \max\{M_1,1\} + 2\sqrt{n}\norme{h}_{\cC^0}$. Let $A_1 = A_0 - F(x_0,A_0)\Id \in \bH$. Since $F(x_0,A_0) = h(x_0)$, we have $\norme{A_1} \geq M_1$ (and $\norme{A_1} \geq 1$) so Definition \ref{DEF:Presque spectrale} gives us functions $f$, $B$ and $G$ such that locally around $(x_0,A_1)$,
$$
\bH = \{(x,A) \in \End_S(TX,g) \mid f(\lambda(B(x) + A)) + G(x,A) = 0\}.
$$
Let the germs
$$
\tilde{B} : x \longmapsto B(x) + h(x)\Id, \qquad \tilde{G} : (x,A) \longmapsto G(x,A + h(x)\Id),
$$
where $\tilde{B}$ is defined near $x_0$ and $\tilde{G}$ is defined near $(x_0,A_0)$. We keep the same function $f$ for $\bH'$.

We have
$$
\tilde{B}(x_0) + A_1 = B(x_0) + h(x_0)\Id + A_0 - F(x_0,A_0)\Id = B(x_0) + A_0,
$$
so $\tilde{B}(x_0) + A_1 \in \Gamma$ and
$$
\tilde{G}(x_0,A_1) = G(x_0,A_1 + h(x_0)\Id) - h(x_0) = G(x_0,A_0) = -f(\lambda(\tilde{B}(x_0) + A_1)).
$$
In particular, the first four points in Definition \ref{DEF:Presque spectrale} are verified. The fifth point is straightforward with in particular the same constant $M_7$. For the bound on $\norme{\nabla_A\tilde{G}(x_0,A_1)}$, we just need to notice that $\frac{1}{\norme{A_0}} \leq \frac{2}{\norme{A_1}}$. Similarly, the sixth point is straightforward with in particular the same constant $M_9$, by noticing that $\norme{A_0} \leq 2\norme{A_1}$ and $\norme{A_1} \geq 1$.

The statement about $\bH_0 - \norme{\nabla^2h}_{\cC^0}\Id$ is trivial.
\end{proof}

Assume now that $\bH$ satisfies the property stated in Theorem \ref{THEI:Existence solution 1}. We set $F = F_\Id$. Assume up to a translation that the $\cC$-sub-solution is $\underline{u} = 0$. We set up a continuity method. Let for all $t \in [0,1]$,
$$
(E_t) : \quad \forall x \in X, \quad F(x,\nabla^2u(x)) = (1 - t)F(x,0).
$$
$$
I = \{t \in [0,1] \mid (E_t) \textit{ admits a smooth solution}\}.
$$

Clearly, $0 \in I$ because $u = 0$ solves $(E_0)$.

We then show that $I$ is open. Notice first that by assumption on $\bH$ (see the statement of Theorem \ref{THEI:Existence solution 1}) with $u = 0$ and $h(x) = F(x,0)$, we must have
\begin{equation}\label{EQ:Intégrale nulle}
    \int_X F(x,0) \, d\mu_g(x) = 0.
\end{equation}
Then, let $t_0 \in I$ and $u_0$ a smooth solution of $(E_{t_0})$ such that $\int_X u(x) \, d\mu_g(x) = 0$. We denote by $\cC_0^{k,\alpha}(X,\R)$ the Hölder space of $\cC^{k,\alpha}$ functions from $X$ to $\R$ whose integral over $X$ vanishes. Let
$$
\Psi : \fonction{I \times \cC_0^{2,1/2}(X,\R)}{\cC_0^{0,1/2}(X,\R)}{(t,u)}{x \mapsto F(x,\nabla^2u(x)) - (1 - t)F(x,0) - \frac{1}{\Vol(X)}\int_X F(y,\nabla^2u(y)) d\mu_g(y)}.
$$
By \eqref{EQ:Intégrale nulle}, $\Psi$ indeed takes values in $\cC_0^{0,1/2}(X,\R)$. We have $\Psi(t_0,u_0) = 0$ and
$$
\frac{\partial\Psi}{\partial u}(t_0,u_0)\hat{u} : x \mapsto \scal{\nabla_AF(x,\nabla^2u_0(x))}{\nabla^2\hat{u}(x)} - \frac{1}{\Vol(X)}\int_X \scal{\nabla_AF(y,\nabla^2u_0(y))}{\nabla^2\hat{u}(y)} d\mu_g(y).
$$
By ellipticity of $F$, this linear map is an isomorphism. By the implicit function theorem, for all $t$ close enough to $t_0$, there is a $u_t \in \cC_0^{2,1/2}(X,\R)$ such that for all $x$,
$$
F(x,\nabla^2u_t(x)) = (1 - t)F(x,0) + \frac{1}{\Vol(X)}\int_X F(y,\nabla^2u_t(y)) d\mu_g(y).
$$
In other words,
$$
\nabla^2u_t(x) - \left((1 - t)F(x,0) + \frac{1}{\Vol(X)}\int_X F(y,\nabla^2u_t(y)) d\mu_g(y)\right)\Id \in \bH.
$$
By assumption on $\bH$ and \eqref{EQ:Intégrale nulle}, we must have
$$
\int_X F(y,\nabla^2u_t(y)) \, d\mu_g(y) = 0.
$$
We deduce that $u_t$ is a solution of $(E_t)$ hence the openness of $I$.

Let $t_\infty \in \overline{I}$ and $(t_m)_{m \geq 0}$ a sequence of elements of $I$ that converges toward $t_\infty$. We want to show that $t_\infty \in I$. Let for all $m$, $u_m$ be the solution of $(E_{t_m})$ such that $\max_{x \in X} u_m(x) = 0$.

By Lemma \ref{LEM:Translation hypersurface presque spectrale}, each
$$
\bH^t = \{(x,A) \in \End_S(TX,g) \mid F(x,A) = (1 - t)F(x,0)\}
$$
is almost spectral with a tuple that is independent from $t$. Moreover, there is a family $\bH_0$ as in Sub-section \ref{SEC:Estimées C0} independent from $t$ such that $\bH_0 \subset (\bH^t)^-$. We easily see that the constant $M_{13}'$ and the function $\nu$ of Sub-section \ref{SEC:Estimées C2alpha} can be controlled uniformly with respect to $t$.

Finally, since $0$ is a $\cC$-sub-solution for $\bH$, let $\delta$ and $R$ be positive constants such that \eqref{EQ:Définition delta et R} holds for all $x$. Moreover, $0$ is a solution thus a $\cC$-sub-solution for $\bH^0$ so up to reducing $\delta$ and increasing $R$, we may assume that \eqref{EQ:Définition delta et R} also holds for $\bH^0$. Let $t \in [0,1]$ and let us denote by $F_P^t$ the functions defined in Sub-section \ref{SEC:Fonction F_P} with respect to $\bH^t$. In particular, $F^1 = F$. For all $x$ and all $P \in \End_S^{++}(T_xX,g)$ of norm $1$, $-\delta\Id - F_P^t(x,-\delta\Id)P \in \bH^t$ so $-\delta\Id - (1 - t)F_\Id^1(x,0)\Id - F_P^t(x,-\delta\Id)P \in \bH$. We deduce that
$$
F_P^t(x,-\delta\Id) = F_P^1(x,-\delta\Id - (1 - t)F_\Id^1(x,0)\Id).
$$
If $0 \in H_x^-$, then $F_\Id^1(x,0) \leq 0$ so
$$
-F_P^t(x,-\delta\Id) = -F_P^1(x,-\delta\Id - (1 - t)F_\Id^1(x,0)\Id) \leq -F_P^1(x,-\delta\Id) \leq R.
$$
Else, $0 \in H_x^+$. We easily see that it implies that $H_x^t \subset (H_x^0)^-$ for all $t$ hence
$$
-F_P^t(x,-\delta\Id) \leq -F_P^0(x,-\delta\Id) \leq R.
$$
In any case, \eqref{EQ:Définition delta et R} holds for all $t$ at all $x$ with a uniform $\delta$ and $R$.

By Propositions \ref{PRO:Estimées L1}, \ref{PRO:Estimées C0}, \ref{PRO:Estimées C1} and \ref{PRO:Estimées C2alpha} and Theorem \ref{THE:Estimées C2}, there is a uniform $\alpha > 0$ and a uniform \textit{a priori} estimate on the $\cC^{2,\alpha}$ bound of all $u_m$, say
$$
\norme{u_m}_{\cC^{2,\alpha}} \leq C.
$$

By compactness of the closed balls of $\cC^{2,\alpha}$ in $\cC^{2,\alpha/2}$, we may assume that $u_m \rightarrow u_\infty$ in $\cC^{2,\alpha/2}$ so $u_\infty$ is a solution to $(E_{t_\infty})$. By a standard bootstrapping argument $u_\infty$ is smooth thus $t_\infty \in I$. We deduce that $I = [0,1]$ so $(E_1)$ admits a smooth solution, which proves Theorem \ref{THEI:Existence solution 1}. The uniqueness is a consequence of Theorem \ref{THE:Unicité}.

\subsection{Proof of Theorem \ref{THEI:Existence solution 2}}

Let $\boldsymbol{\cH}$ as in the statement of Theorem \ref{THEI:Existence solution 2}. For all $c \in [a,b]$, let $F^c$ be the function $F_\Id$ associated with $\bH^c$. Assume up to a translation that the $\cC$-sub-solution for $\bH^b$ is $\underline{u} = 0$. We set up a continuity method. Let for all $t \in [0,1]$ and $c \in [a,b]$,
$$
(E_{t,c}) : \quad \forall x \in X, \quad F^c(x,\nabla^2u(x)) = (1 - t)F^c(x,0) + K,
$$
where the unknown is the couple $(u,K)$ where $K \in \R$ is a constant. Let
$$
I = \{(t,c) \in [0,1] \times [a,b] \mid (E_{t,c}) \textit{ admits a solution $(u,K)$ with $u$ smooth}\}.
$$

Clearly, for all $c$, $(0,c) \in I$ because $(u,K) = (0,0)$ solves $(E_{0,c})$.

We then show that $I$ is open. Let $(t_0,c_0) \in I$, $(u_0,K_0)$ a solution of $(E_{t_0,c_0})$ and
$$
\Psi : \fonction{I \times \cC_0^{2,1/2}(X,\R)}{\cC_0^{0,1/2}(X,\R)}{((t,c),u)}{x \mapsto F^c(x,\nabla^2u(x)) - (1 - t)F^c(x,0) - \scriptstyle\frac{1}{\Vol(X)}\int_X F(y,\nabla^2u(y)) - (1 - t)F^c(x,0) d\mu_g(y)}.
$$
We have $\Psi((t_0,c_0),u_0) = 0$ and
$$
\frac{\partial\Psi}{\partial u}((t_0,c_0),u_0)\hat{u} : x \mapsto \scal{\nabla_AF^{c_0}(x,\nabla^2u_0(x))}{\nabla^2\hat{u}(x)} - \frac{1}{\Vol(X)}\int_X \scal{\nabla_AF^{c_0}(y,\nabla^2u_0(y))}{\nabla^2\hat{u}(y)} d\mu_g(y).
$$
By ellipticity of $F$, this linear map is an isomorphism. By the implicit function theorem, for all $t$ close enough to $t_0$, there is a $u_{t,c} \in \cC_0^{2,1/2}(X,\R)$ such that for all $x$,
$$
F^c(x,\nabla^2u_t(x)) = (1 - t)F^c(x,0) + \frac{1}{\Vol(X)}\int_X F^c(y,\nabla^2u_t(y)) - (1 - t)F^c(x,0) d\mu_g(y).
$$
It shows that $(t,c) \in I$ hence the openness of $I$.

The closedness part will be a bit different than for the proof of Theorem \ref{THEI:Existence solution 1}. We start with a simple lemma on the constant $K$.

\begin{lemma}\label{LEM:Borne K}
    If $(u,K)$ is a solution to $(E_{t,c})$, then
    $$
    \min_{x \in X} F^b(x,0) \leq t\min_{x \in X} F^c(x,0) \leq K \leq t\max_{x \in X} F^c(x,0) \leq \max_{x \in X} F^a(x,0).
    $$
\end{lemma}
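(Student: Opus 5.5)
The plan is a maximum-principle argument at the extremal points of $u$, followed by a comparison of the functions $F^c(\cdot,0)$ as $c$ varies. Throughout I use two facts about $F^c = F_\Id$ for the family $\bH^c$. First, $F^c(x,\cdot)$ is $\End_S^{++}(T_xX,g)$-elliptic by Lemma \ref{LEM:Unicité F_I}. Hence it is non-decreasing along positive semi-definite directions: if $A \leq A'$, then $F^c(x,A) \leq F^c(x,A')$. Second, $F^c(x,A - s\Id) = F^c(x,A) - s$ for all $s \in \R$, by the characterisation in Proposition \ref{PRO:F_I bien définie}.

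\textbf{Step 1: inner inequalities.} Let $(u,K)$ solve $(E_{t,c})$. Let $x_+$ be a point where $u$ attains its maximum. Then $\nabla^2u(x_+) \leq 0$, so by monotonicity $F^c(x_+,\nabla^2u(x_+)) \leq F^c(x_+,0)$. Plugging this into the equation gives $(1-t)F^c(x_+,0) + K \leq F^c(x_+,0)$, that is,
$$
K \leq tF^c(x_+,0) \leq t\max_X F^c(\cdot,0).
$$
Symmetrically, at a minimum point $x_-$ we have $\nabla^2u(x_-) \geq 0$, which yields $K \geq tF^c(x_-,0) \geq t\min_X F^c(\cdot,0)$.

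\textbf{Step 2: ordering in $c$.} I claim that $F^b \leq F^c \leq F^a$ pointwise. Take $(x,A)$ and set $B = A - F^c(x,A)\Id \in H^c_x$. Since $c \mapsto (H^c_x)^-$ is non-decreasing, $H^c_x \subset (H^c_x)^- \subset (H^b_x)^-$, so $F^b(x,B) \leq 0$. By translation invariance this reads $F^b(x,A) \leq F^c(x,A)$. For the other inequality, $H^a_x \subset (H^c_x)^-$ gives $F^c(x,A - F^a(x,A)\Id) \leq 0$, that is, $F^c(x,A) \leq F^a(x,A)$.

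\textbf{Step 3: outer inequalities.} Here I use the hypothesis of Theorem \ref{THEI:Existence solution 2} after the translation $\underline{u} = 0$. It states $0 \in (H^a_x)^+ \cap (H^b_x)^-$, so $F^b(x,0) \leq 0 \leq F^a(x,0)$ for every $x$. Combining this with Step 2 and $t \in [0,1]$ gives
$$
t\min_X F^c(\cdot,0) \geq t\min_X F^b(\cdot,0) \geq \min_X F^b(\cdot,0),
$$
where the last inequality uses that $\min_X F^b(\cdot,0) \leq 0$. Likewise,
$$
t\max_X F^c(\cdot,0) \leq t\max_X F^a(\cdot,0) \leq \max_X F^a(\cdot,0),
$$
using that $\max_X F^a(\cdot,0) \geq 0$.

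The main point to handle carefully is Step 2. One must check that the inclusion of the sub-level sets $(H^c)^-$ translates into the stated ordering of the $F^c$, with the correct direction. This relies on the identity $F(A - s\Id) = F(A) - s$ and on the fact that $(H^c)^-$ is independent of the choice of $P$, as shown at the end of Sub-section \ref{SEC:Fonction F_P}. The sign conditions in Step 3 are what allow the factor $t$ to be dropped in the outer inequalities.
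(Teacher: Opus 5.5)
Your proof is correct and follows the paper's argument: the maximum principle at the extremal points of $u$ gives the inner inequalities, and the ordering $F^b \leq F^c \leq F^a$ together with $F^b(\cdot,0) \leq 0 \leq F^a(\cdot,0)$ and $t \in [0,1]$ gives the outer ones. The only difference is that you derive that ordering explicitly from the monotonicity of $c \mapsto (H^c_x)^-$ via $F^c(x,A - s\Id) = F^c(x,A) - s$, which the paper simply asserts, and you write out the lower bound that the paper leaves as ``similarly''.
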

\begin{proof}
Let $x_0$ be a point where $u$ reaches its maximum. In particular, $\nabla^2u(x_0) \leq 0$ so by ellipticity, $F^c(x_0,\nabla^2u(x_0)) \leq F^c(x_0,0)$ hence
$$
K = F^c(x_0,\nabla^2u(x_0)) - (1 - t)F^c(x_0,0) \leq tF^c(x_0,0) \leq t\max_{x \in X} F^c(x,0).
$$
Moreover, since $\bH^c \subset (\bH^a)^+$, $F^c \leq F^a$ and since for all $x$, $(x,0) \in (\bH^a)^+$, $F^a(x,0) \geq 0$ hence
$$
t\max_{x \in X} F^c(x,0) \leq \max_{x \in X} F^a(x,0).
$$
We get the other inequality similarly.
\end{proof}

$$
\bH^{t,c,K} = \{(x,A) \in \End_S(TX,g) \mid F^c(x,A) = (1 - t)F^c(x,0) + K\},
$$
are almost spectral with a uniform tuple $\underline{M}$ when $K$ stays in the compact interval\\
$\left[\min_{x \in X} F^b(x,0),\max_{x \in X} F^c(x,0)\right]$. Moreover, there is a family $\bH_0$ as in Sub-section \ref{SEC:Estimées C0} independent from $(t,c,K)$ such that $\bH_0 \subset (\bH^{t,c,K})^-$. We easily see that the constant $M_{13}'$ and the function $\nu$ of Sub-section \ref{SEC:Estimées C2alpha} can be controlled uniformly with respect to $t$, $c$ and $K$.

Let $F_P^{t,c,K}$ the functions associated with $\bH^{t,c,K}$ as before. Since $0$ is a $\cC$-sub-solution for $\bH^b$, let $\delta$ and $R$ be positive constants such that \eqref{EQ:Définition delta et R} holds with the $F_P^{0,b,0}$ for all $x$. For all $c \leq b$, $\bH^c \subset (\bH^b)^-$ so \eqref{EQ:Définition delta et R} also holds with $F_P^{0,c,0}$. With a similar reasoning as in the proof of Theorem \ref{THEI:Existence solution 1}, \eqref{EQ:Définition delta et R} also holds with $F_P^{t,c,0}$ (up to changing $\delta$ and $R$ but they remain uniform). Finally, if $K \leq 0$, then $\bH^{t,c,K} \subset (\bH^{t,c,0})^-$ so \eqref{EQ:Définition delta et R} remains true for $F_P^{t,c,K}$.

By Propositions \ref{PRO:Estimées L1}, \ref{PRO:Estimées C0}, \ref{PRO:Estimées C1} and \ref{PRO:Estimées C2alpha} and Theorem \ref{THE:Estimées C2}, there is a uniform $\alpha > 0$ and a uniform \textit{a priori} estimate on the $\cC^{2,\alpha}$ bound on all $u : X \rightarrow \R$ such that $(u,K)$ is a solution of some $(E_{t,c})$, as long as $K \leq 0$ (and we still normalise with $\max_{x \in X} u(x) = 0$). With the same argument as before, if $(t_m,c_m)$ is a sequence of elements of $I$ that converges toward some $(t_\infty,c_\infty)$ and $(u_m,K_m)$ is a sequence of solutions of $E_{t_m,c_m}$ with $u_m$ normalised and $K_m \leq 0$, then $(t_\infty,c_\infty) \in I$.

We are now ready to conclude. Notice first that by Theorem \ref{THE:Unicité}, the (normalised) solution $(u_{t,c},K_{t,c})$ of $(E_{t,c})$ is unique when it exists. Moreover, as a consequence of the implicit theorem, $(t,c) \mapsto K_{t,c}$ is continuous.

Let
$$
J_1 = \{t \in [0,1] \mid (t,b) \in I\}.
$$
$(0,b) \in I$ so $0 \in J_1$ and $J_1$ is open because $I_1$ is. Moreover, recall that by assumption, for all $x \in X$, $(x,0) \in (H_x^b)^-$ so $F^b(x,0) \leq 0$. By Lemma \ref{LEM:Borne K}, we deduce that for all $t \in J_1$, $K_{t,b} \leq 0$. Therefore, $J_1$ is closed so $J_1 = [0,1]$.

Let
$$
J_2 = \{c \in [a,b] \mid (1,c) \in I\}.
$$
$1 \in J_1$ so $b \in J_2$ and $J_2$ is open by openness of $I$. Let $c_0 \in [a,b]$ be the smallest possible such that $]c_0,b] \subset J_2$. We have to distinguish two cases.

\textbf{First case:} For all $c > c_0$, $K_{1,c} \leq 0$. In this case, $c_0 \in J_2$ so by openness of $J_2$ and minimality of $c_0$, we must have $c_0 = a$ and $J_2 = [a,b]$. By assumption, for all $x$, $(x,0) \in (\bH^a)^+$ so $F^a(x,0) \geq 0$. By Lemma \ref{LEM:Borne K}, we must have $K_{1,a} \geq 0$ but $K_{1,a} \leq 0$ by continuity hence $K_{1,a} = 0$ and $u_{1,a}$ is the wanted solution with $c = a$. Notice that in this case, we must have $F(x,0) = 0$ for all $x$ \textit{i.e.} $u_{1,a} = 0$.

\textbf{Second case:} There is $c_1 > c_0$ such that $K_{1,c_1} \geq 0$. Then, since $K_{1,b} \leq 0$ and by continuity of $c \mapsto K_{1,c}$, there is a $c \geq c_1$ such that $K_{1,c} = 0$. $u_{1,c}$ is our wanted solution.\\

The uniqueness part easily follows from Theorem \ref{THE:Unicité}.

\bibliographystyle{plain}
\bibliography{bibliography}

\end{document}